\def\bmx#1{\left(\begin{array}{@{}#1@{}}}
\def\emx{\end{array}\right)}
\def\rr{\mathbf{r}}
\def\ll{\mathbf{l}}
\def\ppsi{{\boldsymbol \psi}}
\newcommand{\mat}[1]{\left[ \begin{array}{#1}}
\newcommand{\rix}{\end{array}\right] }
\theoremstyle{plain}
\newtheorem{coll}{Corollary}[section]
\newtheorem{thm}{Theorem}[section]
\newtheorem{lem}{Lemma}[section]
\newtheorem{prop}{Proposition}[section]
\theoremstyle{definition}
\numberwithin{equation}{section}
\begin{document}
\title[Nonlinear Geometric Optics for Hyperbolic Systems of Conservation Laws]{Weakly Nonlinear Geometric Optics\\
 for Hyperbolic Systems of Conservation Laws}
\author{Gui-Qiang Chen $\qquad$ Wei Xiang $\qquad$ Yongqian Zhang}

\address{Gui-Qiang G. Chen, Mathematical Institute, University of Oxford, 24-27 St Giles, Oxford, OX1 3LB, UK; School of Mathematical Sciences, Fudan University, Shanghai 200433, China}
\email{\tt Gui-Qiang.Chen@maths.ox.ac.uk}

\address{Wei Xiang, School of Mathematical Sciences, Fudan University, Shanghai 200433, China; Mathematical Institute, University of Oxford, 24-27 St Giles, Oxford, OX1 3LB, UK.}
\email{\tt xiang@maths.ox.ac.uk}

\address{Yongqian Zhang,
         Key Laboratory of Mathematics for Nonlinear Sciences and Shanghai Key Laboratory for Contemporary Applied Mathematics,
         School of Mathematical Sciences, Fudan University, Shanghai 200433, China.}
      \email{yongqianz@fudan.edu.cn}
\date{\today}

\keywords{Nonlinear geometric optics, asymptotic behavior, validity, entropy solutions, hyperbolic systems of conservation laws, nonstrictly hyperbolic, genuinely nonlinear, linear degenerate, arbitrary initial data, approximate equations, leading terms, convergence}
\subjclass[2010]{35L65,35B40,35L45,35L60,35Q60}
\maketitle

\begin{abstract}
We present a new approach to analyze the validation of weakly nonlinear geometric optics
for entropy solutions of nonlinear hyperbolic systems of conservation laws whose
eigenvalues are allowed to have constant multiplicity
and corresponding characteristic fields to be linearly
degenerate.
The approach is based on our careful construction of more accurate
auxiliary approximation to
weakly nonlinear geometric optics,
the properties of wave front-tracking approximate solutions,
the behavior of solutions to the approximate asymptotic equations,
and the standard semigroup estimates.
To illustrate this approach more clearly, we focus first on
the Cauchy problem for the hyperbolic systems
with compact support initial data of small bounded variation
and establish that the $L^1-$estimate
between the entropy solution and the geometric optics expansion
function is bounded by $O(\varepsilon^2)$, {\it independent of} the time variable.
This implies that the simpler geometric optics expansion functions
can be employed to study the behavior of general entropy solutions
to hyperbolic systems of conservation laws.
Finally, we extend the results to the case with non-compact support
initial data of bounded variation.
\end{abstract}

\section{Introduction}
We are concerned with weakly nonlinear geometric optics for entropy solutions of the
following hyperbolic system of conservation laws:
\begin{equation}\label{ngo1}
       \begin{cases}
       \partial_tU + \partial_x F(U)=0, \qquad U\in\mathbb{R}^n, \\
       U|_{t=0}=U^0+\varepsilon\, U^1(x),
       \end{cases}
\end{equation}
where $F$: $\mathbb{R}^n\to \mathbb{R}^n$ is a smooth function.
The Jacobian matrix $\nabla F=\nabla_U F(U)$ is diagonalizable at any point $U\in \mathbb{R}^n$
and has $n$ real eigenvalues such that
any eigenvalue has constant multiplicity. Without loss of generality, we assume that
\begin{equation}\label{1.2}
\lambda_1(U)<\cdots<\lambda_m(U)<\lambda_{m+1}(U)\equiv\cdots\equiv\lambda_{m+p}(U)
\stackrel{\triangle}{=}\lambda(U)<\lambda_{m+p+1}(U)<\cdots<\lambda_n(U),
\end{equation}
where
$1\le m\le n-1$, $1\le p\le n-m$,
and the corresponding left and right eigenvectors
$\{\ll_j(U)\}_{j=1}^n$ and $\{\rr_k(U)\}_{k=1}^n$ satisfy
\begin{equation}\label{ngo2}
\ll_j(U)\,\nabla F(U)=\lambda_j \ll_j(U),\;~~\nabla F(U)\, \rr_k(U)=\lambda_k(U)\rr_k(U),\qquad \ll_j(U)\cdot \rr_k(U)=\delta_{jk}.
\end{equation}
Here $\delta_{jk}$ is the Kronecker delta satisfying $\delta_{jj}=1$ and $\delta_{jk}=0$ when $j\ne k$.
When $p=1$, the system in \eqref{ngo1} is strictly hyperbolic.
In addition, we assume that each characteristic field is either genuinely nonlinear
or linearly degenerate, that is, either of the following holds:
\begin{equation}\label{ngo3}
\nabla_U\lambda_j(U)\cdot \rr_j(U)\equiv 1\;
\quad \mbox{or} \quad
\nabla_U\lambda_j(U)\cdot \rr_j(U)\equiv0\qquad\,\,\, \mbox{for all}\;\; U\in \mathbb{R}^n,\,\,
j=1,\cdots, n.
\end{equation}

A prototype of such hyperbolic systems is the full steady Euler equations
for supersonic ideal gases in
$\mathbb{R}^{2}$ with the following form for $\mathbf{x}\in \mathbb{R}^2$:
$$
\left\{
       \begin{array}{l}
       \nabla_{\mathbf{x}}\cdot(\rho\mathbf{v})=0,\\
       \nabla_{\mathbf{x}}\cdot(\rho\mathbf{v}\otimes\mathbf{v})+\nabla p=0,\\
       \nabla_{\mathbf{x}}\cdot((\frac{1}{2}\rho|\mathbf{v}|^2+\frac{\gamma p}{\gamma-1})\mathbf{v})=0,
       \end{array}
       \right.
$$
where $\rho$ is the density, $\mathbf{v}=(v_1,v_2)$ the fluid velocity, and $p$ the pressure.
The notation $\mathbf{a}\otimes\mathbf{b}$ denotes the tensor product of the vectors $\mathbf{a}$ and $\mathbf{b}$.

Set $U=(\rho v_1, \rho v_1 \mathbf{v}, \rho v_1(\frac{1}{2}|\mathbf{v}|^2+\frac{\gamma p}{(\gamma-1)\rho}))$.
When $v_1\ne 0$, the vector
$$
(\rho v_2, \rho v_2 \mathbf{v}, \rho v_2(\frac{1}{2}|\mathbf{v}|^2+\frac{\gamma p}{(\gamma-1)\rho}))
$$
can be expressed as a vector function $F(U)$ of $U$. Then the system can be written into
the form in \eqref{ngo1} with $(t,x)=(x_1, x_2)$.
By direct calculation, we find that the eigenvalues of this system are
$$
\lambda_0=\frac{v_2}{v_1} \,\, (\mbox{repeated}),\qquad \lambda_{\pm}=\frac{v_1v_2\pm c\sqrt{|\mathbf{v}|^2-c^2}}{v_1^2-c^2},
$$
and the corresponding eigenvectors are $\rr_{01}, \rr_{02}$, and $\rr_\pm$.
Then
$$
\begin{array}{l}
\rr_{\pm}\cdot\nabla\lambda_{\pm}\ne0 \,\,\,\qquad \mbox{for all}\;\; |\mathbf{v}|>c,\\[2mm]
\rr_{0j}\cdot\nabla\lambda_0\equiv0 \,\,\,\, \qquad \mbox{for} \;\; j=1, 2.
\end{array}
$$
Thus, the two characteristic fields corresponding to $\lambda_{\pm}$ are genuinely nonlinear, while
the other two fields corresponding to $\lambda_0$ are linearly degenerate.

\medskip
An asymptotic geometric optics expansion is of the following form:
$$
U^{\varepsilon}(t,x)=U^0+\varepsilon\, V^{\varepsilon}(t,x),
$$
where $U^0$ is a constant background state in \eqref{ngo1}. By a formal derivation of the expansion of weakly
nonlinear geometric optics for conservation laws ({\it cf.} DiPerna-Majda \cite{dm-cmp1985313}),
the expansion is expected to be
$$
U(t,x)=U^0+\varepsilon\sum_{j=1}^n\sigma^{(j)}(\varepsilon t,x-\lambda_j(U^0)t)\rr_j(U^0)+o(\varepsilon),
$$
and the functions $\sigma^{(j)}(\tau,y)$ satisfy a decoupled system of scalar conservation laws:
$$
\partial_\tau\sigma^{(j)}+b_j^0\partial_y(\sigma^{(j)})^2=0,\qquad 1\le j\le n,
$$
with the coefficients
$$
b_j^0=\frac{1}{2}\ll_j(U^0)\cdot \nabla^2_U F(U^0)(\rr_j(U^0),\rr_j(U^0))
=\frac{1}{2}\rr_j(U^0)\cdot\nabla_U\lambda(U^0).
$$
Then the genuinely nonlinear condition from \eqref{ngo3} implies that $b_j^0=\frac{1}{2}\neq0$
which yields that equation \eqref{scaler equation}
is the inviscid Burgers equation:
\begin{equation}\label{scaler equation}
\partial_\tau\sigma^{(j)}+ \frac{1}{2}\partial_y(\sigma^{(j)})^2=0,
\end{equation}
while the linearly degenerate condition
implies that $b_j^0=0$ which yields a linear equation
\begin{equation}\label{scalar-equation-2}
\partial_\tau \sigma^{(j)}\equiv0.
\end{equation}
We define
\begin{equation}\label{og expansion}
U_w^{\varepsilon}(t,x)=U^0+\varepsilon\sum_{j=1}^n\sigma^{(j)}(\varepsilon t,x-\lambda_j(U^0)t)\rr_j(U^0).
\end{equation}
Then the much simpler function $U_w^{\varepsilon}$ can be used to study the behavior
of general entropy solutions of nonlinear hyperbolic system of conservation laws,
provided that the convergence can be rigorously verified.

An approach has been introduced in Chen-Christoforou-Zhang \cite{ccz-iumj20072535,ccz-arma200897},
based on the results presented in Bressan \cite{b-book2000},
to compare the solutions of two different systems,
which requires that one of them is the standard Riemann semigroup (SRS) while the other is only a global entropy
solution with bounded variation obtained by the front tracking method.
Let $\mathcal{D}\subset L^1(\mathbb{R}; \mathbb{R}^n)$ be a closed domain.
A map $S: \mathcal{D}\times[0,\infty[\longmapsto\mathcal{D}$ is a {\bf SRS} generated
by system \eqref{ngo1}  if the following three conditions hold ({\it cf.} \cite{b-book2000}):
\begin{description}
\item[$\bullet$ {Semigroup property}] For every $\bar{U}\in \mathcal{D}$ and $t,s\ge 0$,
      \begin{equation}\label{seimgroup property}
       S_0\bar{U}=\bar{U},\qquad S_tS_s\bar{U}=S_{t+s}\bar{U};
       \end{equation}
\item[$\bullet$ Lipschitz continuity] There exist constants $L_1$ and $L_2$ such that,
for all $\bar{U}, \bar{V}\in\mathcal{D}$ and $s,t\ge 0,$
       \begin{equation}\label{lip continuity}
       \|S_t\bar{U}-S_s{\bar{V}}\|_{L^1}\le L_1\|\bar{U}-\bar{V}\|_{L^1}+L_2|t-s|;
       \end{equation}
\item[$\bullet$ Consistency with the Riemann solver] For any piecewise constant initial data $\bar{U}\in\mathcal{D}$,
       there exists $\delta>0$ such that, for all $t\in[0,\delta]$, the trajectory $U(t,\cdot)=S_t\bar{U}(\cdot)$ coincides with
       the solution of the Cauchy problem \eqref{ngo1} obtained by piecing together the standard solutions for the
       Riemann problems determined by the jumps of $\bar{U}$.
\end{description}

Following \cite{ccz-iumj20072535,ccz-arma200897},
in this paper, the general entropy solution $U(t,x)$ of the Cauchy problem \eqref{ngo1} under consideration is the SRS,
which can be constructed by the front tracking method ({\it cf.} \cite{ky-aml2003143}),
and the entropy solution for the corresponding asymptotic scalar equation \eqref{scaler equation}
is constructed by polygonal approximations, first introduced in Dafermos \cite{d-jmaa197233}, with initial data:
\begin{equation}\label{scaler initial data}
\sigma^{(j)}(0,y)
=\ll_j(U^0)\cdot U^1(y),\qquad 1\le j\le n.
\end{equation}
We establish the $L^1$-estimate between $U(t,x)$ and $U_w^{\varepsilon}(t,x)$ by using both the properties
of the wave-front tracking algorithm and
the standard error formula ({\it cf.} \cite{b-book2000}):
\begin{equation}\label{semigroup estimate}
\|S_TW(0)-W(T)\|_{L^1}\le L\int_0^T\liminf_{h\rightarrow 0+}\frac{\|S_hW(\tau)-W(\tau+h)\|_{L^1}}{h}\mbox{d}\tau,
\end{equation}
where $L$ is the Lipschitz constant of the semigroup $S_t$,
and $W(t)$ is any Lipschitz continuous map defined on $[0,T]$.
One of our objectives here is to develop a new approach to provide a
rigorous mathematical proof of the following theorem.

\begin{thm}\label{thm:compacted initial data}{\bf (Main Theorem).}
Let $F(U)\in C^2(\mathbb{R}^n;\mathbb{R}^n)$, and let $U^1(x)$ be an arbitrary function
of bounded variation with compact support.
Assume that each eigenvalue of the hyperbolic system in \eqref{ngo1} has
constant multiplicity and its corresponding
characteristic field is either genuinely nonlinear or linearly degenerate.
Consider an entropy solution $U^{\varepsilon}(t,x)$ of the Cauchy problem \eqref{ngo1}, which is a SRS, and
the weakly nonlinear geometric optics expansion function
$U^{\varepsilon}_w(t,x)$ defined by
\eqref{scaler equation}--\eqref{og expansion} and \eqref{scaler initial data}.
Then there exists $\varepsilon_0>0$
such that, for all $t>0$ and $\varepsilon\in(0,\varepsilon_0]$,
\begin{equation}\label{compact}
\|U^{\varepsilon}(t,\cdot)-U^{\varepsilon}_w(t,\cdot)\|_{L^1}\le C\, \varepsilon^2,
\end{equation}
for some $C>0$ independent of $\varepsilon$ and $t$.
\end{thm}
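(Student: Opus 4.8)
The plan is to interpose between the entropy solution $U^{\varepsilon}$ and the profile $U^{\varepsilon}_w$ a \emph{more accurate} approximation $\widetilde U^{\varepsilon}$ that is $O(\varepsilon^2)$-close to $U^{\varepsilon}_w$ in $L^1$ \emph{uniformly in $t$}, and then to compare $U^{\varepsilon}$ with $\widetilde U^{\varepsilon}$ through the error formula \eqref{semigroup estimate}. The choice $W=U^{\varepsilon}_w$ cannot be used directly: projecting $\partial_tU^{\varepsilon}_w+\partial_xF(U^{\varepsilon}_w)$ onto $\ll_i(U^0)$ gives
$$
\varepsilon^2\big(\partial_\tau\sigma^{(i)}+b^0_i\,\partial_y(\sigma^{(i)})^2\big)
+\varepsilon^2\sum_{(j,k)\neq(i,i)}\big(\ll_i\cdot\nabla^2_UF(U^0)(\rr_j,\rr_k)\big)\,\sigma^{(j)}\partial_y\sigma^{(k)}+O(\varepsilon^3),
$$
with each $\sigma^{(\cdot)}$ evaluated at $(\varepsilon t,\,x-\lambda_{(\cdot)}(U^0)t)$; the first bracket vanishes by \eqref{scaler equation}--\eqref{scalar-equation-2}, but the remaining quadratic interaction terms are only $O(\varepsilon^2)$ pointwise, and those with $\lambda_j=\lambda_k$ persist for all time, so that $\int_0^T\|\cdot\|_{L^1}\,d\tau$ would be of size $\varepsilon^2T$, not uniform.

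\textbf{Step 1: the refined approximation.} I would set $\widetilde U^{\varepsilon}=U^{\varepsilon}_w+\varepsilon^2 Q^{\varepsilon}$, where $Q^{\varepsilon}$ solves the linearized system $\partial_tQ^{\varepsilon}+\nabla_UF(U^0)\,\partial_xQ^{\varepsilon}=G^{\varepsilon}$ with $G^{\varepsilon}$ equal to minus the quadratic interaction terms above and $Q^{\varepsilon}|_{t=0}=0$, so that $\widetilde U^{\varepsilon}|_{t=0}=U^0+\varepsilon U^1$ agrees with the Cauchy data of $U^{\varepsilon}$. The crucial claim is that $\|Q^{\varepsilon}(t,\cdot)\|_{L^1}$ is bounded uniformly in $t$, whence $\|\widetilde U^{\varepsilon}(t,\cdot)-U^{\varepsilon}_w(t,\cdot)\|_{L^1}=O(\varepsilon^2)$ for all $t$. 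This rests on three structural inputs. First, if $\lambda_j\neq\lambda_k$, then $\sigma^{(j)}(\varepsilon t,x-\lambda_jt)\,\partial_y\sigma^{(k)}(\varepsilon t,x-\lambda_kt)$ is supported in a bounded region of $(t,x)$, since the two wave trains separate at the linear rate $|\lambda_j-\lambda_k|\,t$ while each spreads only like $\sqrt{\varepsilon t}$ (N-wave spreading for \eqref{scaler equation}). Second, every persistent source term ($\lambda_j=\lambda_k$) is either a pure $x$-derivative of a compactly supported function (e.g.\ the self-interactions $\partial_y(\sigma^{(j)})^2$), or, for the interactions internal to the degenerate block $\{m+1,\dots,m+p\}$, has zero spatial mean because the degenerate eigenspace is involutive (Boillat's theorem for eigenvalues of constant multiplicity gives $\ll_i\cdot[\rr_j,\rr_k]=0$ for $i$ outside the block), so that transporting it along the $i$-th characteristic at speed $\lambda_i\neq\lambda_j$ produces a function whose $L^1$ norm is controlled, uniformly in $t$, by that of the corresponding antiderivative. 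Third, the decay $\|\sigma^{(j)}(\tau,\cdot)\|_{L^\infty}=O((1+\tau)^{-1/2})$ and the non-increase of $\mathrm{TV}(\sigma^{(j)}(\tau,\cdot))$ make the persistent contributions summable. The same estimates show that the residual $R^{\varepsilon}:=\partial_t\widetilde U^{\varepsilon}+\partial_xF(\widetilde U^{\varepsilon})$, which is $O(\varepsilon^3)$ pointwise, satisfies $\int_0^{\infty}\|R^{\varepsilon}(t,\cdot)\|_{L^1}\,dt=O(\varepsilon^2)$ uniformly in the upper limit — adding, if necessary, a further cubic corrector of the same type to neutralize persistent cubic interactions.

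\textbf{Step 2: discretization and the error formula.} To put $\widetilde U^{\varepsilon}$ into \eqref{semigroup estimate} I would replace $\sigma^{(j)}$ by its Dafermos polygonal approximation and $Q^{\varepsilon}$ by a piecewise-Lipschitz front-tracking analogue with parameter $\nu$, making $\widetilde U^{\varepsilon}$ a Lipschitz-in-$t$ path of small-BV functions with finitely many fronts; the extra discretization error is $o_\nu(1)$ in $L^1$ uniformly in $t$ and is sent to zero at the end. Each front of $\widetilde U^{\varepsilon}$ is, to leading order, a jump $\varepsilon[\sigma^{(j)}]\,\rr_j(U^0)$ in the $j$-th field: for a genuinely nonlinear field, $[\sigma^{(j)}]<0$ at a shock with Rankine--Hugoniot speed $\tfrac12(\sigma^{(j)}_L+\sigma^{(j)}_R)$, matching the admissible $j$-shock of \eqref{ngo1} through those states up to $O(\varepsilon^2)$; for a linearly degenerate field, $\sigma^{(j)}$ is $\tau$-independent and any jump is automatically an $O(\varepsilon^2)$-accurate contact. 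Hence each front contributes $O(\varepsilon^2\,|[\sigma^{(j)}]|)$ to the integrand of \eqref{semigroup estimate}, with total $O(\varepsilon^2)$ since $\sum|[\sigma^{(j)}]|\le\mathrm{TV}(\sigma^{(j)})$. Collecting this with $\|R^{\varepsilon}(t,\cdot)\|_{L^1}$ and the standard front-tracking interaction errors (controlled by the uniform $O(\varepsilon\,\mathrm{TV}(U^1))$ bound on the total variation), the integrand in \eqref{semigroup estimate} is dominated by an $L^1_t$ function of integral $O(\varepsilon^2)$, uniformly in $T$. Since the data have total variation $O(\varepsilon\,\mathrm{TV}(U^1))$, both $U^{\varepsilon}$ and $\widetilde U^{\varepsilon}$ remain in a fixed small-BV domain on which the SRS has a uniform Lipschitz constant $L$, so \eqref{semigroup estimate} gives $\|U^{\varepsilon}(T,\cdot)-\widetilde U^{\varepsilon}(T,\cdot)\|_{L^1}\le C\varepsilon^2$ for every $T>0$; combined with Step 1 via the triangle inequality, this yields \eqref{compact}.

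\textbf{Main obstacle.} The whole difficulty is the uniformity in time. All the ``local in $t$'' ingredients — the semigroup and interaction estimates, consistency with the Riemann solver — are standard; what is delicate is that the cumulative residual does not grow with $T$. This dictates both the precise design of the corrector $Q^{\varepsilon}$ (it must cancel exactly the non-transversal quadratic interactions and disturb nothing at order $\varepsilon^2$) and the careful use of the geometry: the transversal separation of distinct characteristic groups, the involutivity of the linearly degenerate block, and the dispersive decay of the Burgers profiles, which together make the leftover error integrable on $[0,\infty)$. A secondary but genuine technical point is matching the admissibility and speeds of the discontinuities of $\widetilde U^{\varepsilon}$ to those of the system with $O(\varepsilon^2)$ accuracy, so that the front contributions to \eqref{semigroup estimate} have the right order.
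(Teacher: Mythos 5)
Your overall architecture is the same as the paper's: interpose an $\varepsilon^2$-corrector that makes the profile compatible with the exact wave curves of \eqref{ngo1} to higher order, exploit the separation of the wave families, the involutivity of the degenerate block, and the dispersive decay of the Burgers profiles, and then compare with the entropy solution through \eqref{semigroup estimate}. (The paper's corrector is written explicitly --- $\tfrac{\varepsilon^2}{2}(\sigma^{(j)})^2(\rr_j^0\cdot\nabla)\rr_j^0$ for the genuinely nonlinear fields, plus a term $\varepsilon^2E^{(j)}_\nu$ that turns the linearly degenerate block into an exact multiple contact discontinuity --- rather than through a Duhamel formula for a linearized system, and the transversal interactions are disposed of by splitting time at the finite separation time $T_0$ rather than by a support argument, but these are the same mechanisms.)

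There is, however, a genuine quantitative gap at the decisive point of Step 2. You assert that each front of $\widetilde U^\varepsilon$ matches an admissible wave of \eqref{ngo1} ``up to $O(\varepsilon^2)$'' and hence contributes $O(\varepsilon^2|[\sigma^{(j)}]|)$ to the integrand of \eqref{semigroup estimate}, for a total of $O(\varepsilon^2)$ \emph{per unit time}. That bound integrates to $O(\varepsilon^2 T)$ and cannot give an estimate uniform in $T$; it is exactly the accuracy of the \emph{uncorrected} profile (Propositions \ref{lem:local L1 estimates for genius nonlinear} and \ref{lem:local L1 estimates for linearly degenerate} give the per-front error $C\sigma(\sigma+\max_j|\sigma^{(j)}_{\nu-}|)h\varepsilon^2$), and even inserting the decay $\|\sigma\|_{L^\infty},\,TV(\sigma)=O((\varepsilon t)^{-1/2})$ only produces $\int^T\varepsilon^2(\varepsilon t)^{-1}\,dt\sim\varepsilon\log T$, which still diverges. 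For the scheme to close, you must show that \emph{after} the families separate and \emph{with} the corrector in place, each shock or contact front matches an exact wave to order $\varepsilon^3|\sigma|\,(\max_j|\sigma^{(j)}_{\nu-}|)^2$ (plus a rarefaction-discretization term $\varepsilon^22^{-\nu}|\sigma|$ that is removed by letting $\nu\to\infty$); this is the content of Lemmas \ref{lem:GN Riemann compact support}--\ref{prop:LD Riemann Compact support}, and only then does the sum over fronts become $O\bigl(\varepsilon^3(TV(\sigma_\nu(\varepsilon t,\cdot)))^3\bigr)=O(\varepsilon^{3/2}t^{-3/2})$, whose time integral over $[\varepsilon^{-1},\infty)$ is $O(\varepsilon^2)$. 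As written, your Step 2 accounting contradicts your Step 1 claim that the cumulative residual is $O(\varepsilon^2)$ uniformly in the upper limit. Relatedly, before the separation time the transversal interaction errors genuinely are of size $O(\varepsilon^2)$ per unit time and no corrector removes them, so you must accept an $O(\varepsilon^2t)$ bound on $[0,T_0]$ (your bounded-support observation shows why this costs only $O(T_0\varepsilon^2)$) and run the refined per-front estimate only for $t\ge T_0$; this time-splitting should be made explicit, since the two regimes require different estimates.
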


We remark here that this result allows the eigenvalues of
the $n\times n$ hyperbolic system in \eqref{ngo1}
to have constant multiplicity and the corresponding characteristic fields
to be linearly degenerate,
which answers the open problem posed by Majda in \cite{m}.
In particular, for the $n\times n$ system in \eqref{ngo1},
we obtain that the $L^1-$estimate
between the entropy solution and the geometric optics expansion
function is bounded by $O(\varepsilon^2)$ that is {\it independent of $t\in [0, \infty)$}.

\smallskip
The proof of Theorem \ref{thm:compacted initial data} is based on our following observation:
For a genuinely nonlinear system with initial data of compact support, the waves of different
families in the solution will be separated each other. This enables us to follow
Majda-Rosaales \cite{mr-sam1984149} and use the front tracking method.
We introduce an auxiliary approximate function $V_w^{\epsilon}$
by adding higher order term of $U_w^\epsilon$ to \eqref{og expansion} as
\begin{equation}\label{approximate solution compact support}
V^{\varepsilon}_w=U^0+\varepsilon\sum_{j=1}^n\sigma^{(j)}(\varepsilon t, x-\lambda_j(U^0)t){\bf r}_j(U^0) +\frac{\varepsilon^2}{2}\sum_{j}^n\big(\sigma^{(j)}(\varepsilon t, x-\lambda_j(U^0)t)\big)^2({\bf r}_j(U^0)\cdot\nabla){\bf r}_{j}(U^0)
\end{equation}
and its corresponding more accurate auxiliary approximate function:
\begin{eqnarray}
V^{\varepsilon}_{\nu}
&=& U^0+\varepsilon\sum_{j}\sigma^{(j)}_{\nu}(\varepsilon t, x-\lambda_j(U^0)t)\rr_j(U^0) \nonumber\\
&& +\frac{\varepsilon^2}{2}\sum_{j\in N}
\big(\sigma^{(j)}_{\nu}(\varepsilon t, x-\lambda_j(U^0)t)\big)^2(\rr_j(U^0)\cdot\nabla)\rr_{j}(U^0)\nonumber \\
&& +\varepsilon^2\sum_{j_i\in L}E_{\nu}^{(j)}(x-\lambda_j(U^0)(t-T_0)), \label{auxi function}
\end{eqnarray}
where $j\in N$ means that the corresponding $j-$th characteristic field is genuinely nonlinear,
while $j_i\in L$ means that the corresponding characteristic field is linearly degenerate
and all $\{j_i\}$ together constitute the $j$-th characteristic field; Furthermore,
$\sigma^{(j)}_{\nu}$ is given in \S \ref{sec:scalar scheme} and $E^{(j)}_{\nu}$ is defined in \S 5.
The novelty here is that the new correction terms are introduced to deal with the
contact discontinuities.
With this key observation, then our approach is to
prove the $L^1$--distance between this auxiliary function
and the general entropy solution to system \eqref{ngo1} with the same initial data is $O(1)\varepsilon^2$,
and finally to employ the $L^1$--stability of solutions with respect to initial
data
to establish Theorem \ref{thm:compacted initial data}.
The complete proof of Theorem \ref{thm:compacted initial data}
will be given in \S \ref{sec:main thm} {and \S \ref{sec:compact support}.
This provides an alternative approach to deal with nonlinear geometric optics
for hyperbolic systems of conservation laws in \eqref{ngo1}.
}

As an example of further applications of this approach,
we extend the result to the case when the initial data has non-compact support.

\begin{thm}\label{thm:noncompact case}
Let $F(U)\in C^2(\mathbb{R}^n;\mathbb{R}^n)$
and $U^1(x)\in BV(\mathbb{R};\mathbb{R}^n)\cap L^{1}(\mathbb{R};\mathbb{R}^n)$.
Assume that each real eigenvalue of the hyperbolic system in \eqref{ngo1} has constant multiplicity and its corresponding
characteristic field is either genuinely
nonlinear or linearly degenerate.
Consider an entropy solution $U^{\varepsilon}(t,x)$ of the Cauchy problem \eqref{ngo1}, which is the SRS, and
the weakly nonlinear geometric optics expansion $U^{\varepsilon}_w(t,x)$ defined by
\eqref{og expansion}.
Then
\begin{equation}\label{ieq:noncompact case}
\sup_{0\leq t\leq T_0/\varepsilon}\|U^{\varepsilon}(t,\cdot)-U^{\varepsilon}_w(t,\cdot)\|_{L^1}=o(\varepsilon)
\qquad \mbox{when}\,\, \varepsilon\rightarrow0.
\end{equation}
\end{thm}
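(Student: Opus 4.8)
We plan to deduce Theorem~\ref{thm:noncompact case} from Theorem~\ref{thm:compacted initial data} by approximating the perturbation $U^{1}$ with compactly supported data and passing to the limit, the two structural ingredients being the $L^{1}$‑Lipschitz dependence of the standard Riemann semigroup on its initial datum (see \eqref{lip continuity}) and the $L^{1}$‑contraction of the decoupled scalar conservation laws \eqref{scaler equation}--\eqref{scalar-equation-2} that determine the geometric optics profiles. The idea is that both $U^{\varepsilon}$ and $U^{\varepsilon}_{w}$ depend on $U^{1}$ in $L^{1}$ with a Lipschitz constant of size $O(\varepsilon)$; hence replacing $U^{1}$ by a good compactly supported approximation perturbs each of them by $O(\varepsilon)$ times an arbitrarily small number, while on the approximation Theorem~\ref{thm:compacted initial data} controls the remaining discrepancy by $O(\varepsilon^{2})$. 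Adding these contributions and dividing by $\varepsilon$ will yield the asserted $o(\varepsilon)$ bound, uniformly for $0\le t\le T_{0}/\varepsilon$ (in fact for all $t\ge 0$).

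First I would truncate. Since $U^{1}\in BV(\mathbb{R};\mathbb{R}^{n})\cap L^{1}(\mathbb{R};\mathbb{R}^{n})$, its one‑sided limits exist everywhere and tend to $0$ as $x\to\pm\infty$, so the functions $U^{1}_{k}:=\mathbf{1}_{[-k,k]}\,U^{1}$ are compactly supported and of bounded variation with $\mathrm{TV}(U^{1}_{k})\le\mathrm{TV}(U^{1})+|U^{1}(-k^{+})|+|U^{1}(k^{-})|\le\mathrm{TV}(U^{1})+1$ for all large $k$, and $\|U^{1}-U^{1}_{k}\|_{L^{1}}=\int_{|x|>k}|U^{1}|\,\mathrm{d}x\to 0$ as $k\to\infty$. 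Consequently, for all $\varepsilon$ small enough uniformly in $k$, both $U^{0}+\varepsilon U^{1}$ and $U^{0}+\varepsilon U^{1}_{k}$ lie in the domain $\mathcal{D}$ of the standard Riemann semigroup $S$ generated by \eqref{ngo1}, so that $U^{\varepsilon}(t,\cdot)=S_{t}(U^{0}+\varepsilon U^{1})$ and $U^{\varepsilon}_{k}(t,\cdot):=S_{t}(U^{0}+\varepsilon U^{1}_{k})$ are well defined; and, since each $U^{1}_{k}$ has compact support, Theorem~\ref{thm:compacted initial data} applies to the datum $U^{1}_{k}$.

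Next I would assemble the three errors. Let $U^{\varepsilon}_{w,k}$ denote the expansion \eqref{og expansion} built from the profiles $\sigma^{(j)}_{k}$ solving \eqref{scaler equation}--\eqref{scalar-equation-2} with initial data $\ll_{j}(U^{0})\cdot U^{1}_{k}$. By \eqref{lip continuity}, $\|U^{\varepsilon}(t,\cdot)-U^{\varepsilon}_{k}(t,\cdot)\|_{L^{1}}\le L_{1}\varepsilon\|U^{1}-U^{1}_{k}\|_{L^{1}}$ for all $t\ge0$; by the $L^{1}$‑contraction of the Dafermos polygonal (equivalently Kruzhkov entropy) solutions of \eqref{scaler equation} and by stationarity in the linearly degenerate case \eqref{scalar-equation-2}, $\|\sigma^{(j)}(\tau,\cdot)-\sigma^{(j)}_{k}(\tau,\cdot)\|_{L^{1}}\le\|\ll_{j}(U^{0})\cdot(U^{1}-U^{1}_{k})\|_{L^{1}}$ for all $\tau\ge0$, and hence, since translations preserve the $L^{1}$ norm, $\|U^{\varepsilon}_{w}(t,\cdot)-U^{\varepsilon}_{w,k}(t,\cdot)\|_{L^{1}}\le C\varepsilon\|U^{1}-U^{1}_{k}\|_{L^{1}}$ with $C=\sum_{j}|\rr_{j}(U^{0})|\,|\ll_{j}(U^{0})|$; finally, for each fixed $k$, Theorem~\ref{thm:compacted initial data} provides $\varepsilon_{0}(k)>0$ and $C(k)>0$ with $\|U^{\varepsilon}_{k}(t,\cdot)-U^{\varepsilon}_{w,k}(t,\cdot)\|_{L^{1}}\le C(k)\varepsilon^{2}$ for all $t>0$ and $0<\varepsilon\le\varepsilon_{0}(k)$. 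Adding the three inequalities and dividing by $\varepsilon$ gives
\[
\sup_{0\le t\le T_{0}/\varepsilon}\frac{1}{\varepsilon}\,\|U^{\varepsilon}(t,\cdot)-U^{\varepsilon}_{w}(t,\cdot)\|_{L^{1}}\ \le\ (L_{1}+C)\,\|U^{1}-U^{1}_{k}\|_{L^{1}}+C(k)\,\varepsilon\qquad\text{for }0<\varepsilon\le\varepsilon_{0}(k).
\]
Given $\eta>0$, I would first choose $k$ making the first term $<\eta/2$, and then shrink $\varepsilon$ so that the second term is $<\eta/2$; this proves \eqref{ieq:noncompact case}.

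The argument is structurally soft — the genuine analytic work is already carried by Theorem~\ref{thm:compacted initial data} — so the delicate points are organizational. First, one must make sure the truncations retain uniformly small total variation, so that a single semigroup $S$ with a single Lipschitz constant $L_{1}$ governs $U^{\varepsilon}$ and all the $U^{\varepsilon}_{k}$, and so that the interaction/Glimm‑type functionals underpinning Theorem~\ref{thm:compacted initial data} remain valid for every $U^{1}_{k}$. Second, the quantifiers must be kept in order: the constant $C(k)$ (and the threshold $\varepsilon_{0}(k)$) produced by Theorem~\ref{thm:compacted initial data} may degenerate as $k\to\infty$, which is harmless only because $k$ is frozen before letting $\varepsilon\to0$ — and this is precisely why the conclusion is $o(\varepsilon)$ rather than $O(\varepsilon^{2})$. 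Third, one should record that $U^{\varepsilon}_{w}$ is itself well defined for $BV\cap L^{1}$ data, i.e. that each profile $\sigma^{(j)}$ stays in $BV\cap L^{1}$ for all $\tau$ (for the genuinely nonlinear families this follows from $L^{1}$‑contraction of Burgers' equation against the zero solution, and for the linearly degenerate families $\sigma^{(j)}$ is stationary), which is what makes the contraction estimate above meaningful. An alternative, substantially heavier route would bypass the density reduction and rerun the front‑tracking and error‑formula estimates of \S\ref{sec:main thm}--\S\ref{sec:compact support} directly for non‑compactly supported data while controlling the tails; the reduction above is preferable because it confines all the nonlinear‑wave analysis to the already‑established Theorem~\ref{thm:compacted initial data}.
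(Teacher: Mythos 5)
Your proposal is correct, but it takes a genuinely different route from the paper: \S\ref{sec:noncompacted case} does \emph{not} deduce Theorem \ref{thm:noncompact case} from Theorem \ref{thm:compacted initial data}. Instead the authors rerun the front-tracking/semigroup machinery directly on the non-compactly supported data: they introduce time-dependent correction terms $E_\nu(t,x)$ defined jump-by-jump in \eqref{6.1a}, form the auxiliary function $V^\varepsilon_\nu=U^0+\varepsilon\sum_j\sigma^{(j)}_\nu\rr_j^0+\varepsilon^2E_\nu$, redo the Riemann-solver comparison to get a local error dominated by $\varepsilon^2\sum_{\lambda_j^0\neq\lambda_k^0}|\sigma^{(j)}_{\nu-}||\sigma_k|h$ plus higher-order terms, integrate via the error formula \eqref{semigroup estimate}, and finally show by dominated convergence that $\varepsilon\int_0^{T_0/\varepsilon}\sum_{m}\sum_{\lambda_j^0\neq\lambda_k^0}\|\sigma^{(j)}\|_{L^\infty(I_m)}\,TV_{x\in I_m}(\sigma^{(k)})\,\mathrm{d}s\to 0$; this cross-family interaction term is where the restriction $t\le T_0/\varepsilon$ and the loss from $O(\varepsilon^2)$ to $o(\varepsilon)$ enter. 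Your argument instead truncates to $U^1_k=\mathbf{1}_{[-k,k]}U^1$, controls $\|U^\varepsilon-U^\varepsilon_k\|_{L^1}$ by the time-uniform $L^1$-stability \eqref{lip continuity}, controls $\|U^\varepsilon_w-U^\varepsilon_{w,k}\|_{L^1}$ by Kruzhkov $L^1$-contraction of the decoupled profiles, and applies Theorem \ref{thm:compacted initial data} for each frozen $k$; the delicate points you flag (uniform $TV$ bound so that a single domain $\mathcal{D}$ and Lipschitz constant serve all the data, and the quantifier order that tolerates $C(k)\to\infty$ and $\varepsilon_0(k)\to0$) are exactly the right ones, and indeed $C(k)$ grows at least like the separation time $T_0(k)\sim k$, so the two limits genuinely do not commute. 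What your route buys: it is far shorter and yields the formally stronger conclusion $\sup_{t\ge0}\|U^\varepsilon(t,\cdot)-U^\varepsilon_w(t,\cdot)\|_{L^1}=o(\varepsilon)$ with no restriction to $t\le T_0/\varepsilon$. What it costs: it is entirely parasitic on the hardest part of the paper, the uniform-in-time analysis of \S\ref{sec:compact support} for compactly supported data, whereas the paper's \S\ref{sec:noncompacted case} is a self-standing application of the method that makes explicit which interaction quantity must vanish for the expansion to be valid.
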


For related earlier results in this direction,
we refer the reader to DiPerna-Majda \cite{dm-cmp1985313} for an order of $O(\varepsilon t^2)$ for
the case of periodic initial data and
{the same estimate \eqref{compact}}
for initial data with compact support for $2\times2$ {genuinely nonlinear}
and strictly hyperbolic systems of conservation laws.
For general strictly hyperbolic systems with some kind of periodic properties
of the initial data for which the resonance phenomena occur, {Schochet \cite{s-jde1994473}
proved the $L^1$-estimate of order $o(\varepsilon^2t)$; and
Cheverry \cite{c-dmj1997213} dealt with more general initial data and proved that, for all $t\ge 0$,
$$
\|U_{\varepsilon}(t,\cdot)-m_{\varepsilon}(t,\cdot)\|_{L^1(K)}=o(\varepsilon)
\qquad\mbox{for any fixed compact set}\,\, K\Subset \mathbb{R},
$$
where $m_{\varepsilon}$ is the corresponding geometric optics expansion.
Both of their results allow the characteristic fields to be linearly degenerate,
but require the hyperbolic system in \eqref{ngo1} to be strictly hyperbolic.}
We also refer the reader to  Chen-Junca-Rascke \cite{cjr-jde2006439},
Cheverry \cite{c-cpde19961119}, Gu\`es \cite{g-dmj1992401}, Hunter-Majda-Rosales \cite{hmr-spm1986187},
Joly-M\'{e}tivier-Rauch \cite{jmr-jfa1993106},
Majda-Rosales \cite{mr-sam1984149}, and the references cited therein for related results.
For classical results on the front tracking method and hyperbolic systems of conservation laws,
see Bressan \cite{b-book2000} and Dafermos \cite{d-book2000}.

\section{Front Tracking Schemes and Standard Riemann Semigroups} \label{sec:preliminary}

In this section, we analyze entropy solutions of hyperbolic systems of conservation laws
and front tracking algorithms for scalar equations for subsequent development.

\subsection{Existence and Stability of Entropy Solutions}

Consider the Riemann problem of system \eqref{ngo1} with the following initial data:
\begin{equation}\label{initial data}
U(0,x)=
\left\{
\begin{array}{ll}
U_+,\qquad x>0, \\[2mm]
U_-,\qquad x<0,
\end{array}
\right.
\end{equation}
where $U_{\pm}$ are constant vectors. Based on the results in \cite{ky-aml2003143}, we have

\begin{lem}\label{prop:Riemann systems}
Assume that $F(U)$ satisfies the same assumptions as in Theorem {\rm \ref{thm:compacted initial data}}.
Then, for every compact set $K\Subset\Omega$, there exists $\delta>0$ such that,
whenever $U_-\in K, |U_+-U_-|\le\delta$, the Riemann problem above has a unique entropy solution,
which consists of $n-p+2$ constant states, denoted by $U_i\;(i=0,1,\cdots,m,m+p,\cdots,n)$,
and $n-p+1$ elementary waves (shock or rarefaction waves corresponding to the genuinely nonlinear
characteristic fields,
or contact discontinuities to the linear degenerate fields).
Moreover, there exists a unique small parameter vector $(\beta_1,\cdots, \beta_n)$ such that
\begin{equation}\label{laxtypemapdf1}
\begin{array}{ll}
U_0=U_-,\quad U_n=U_+=\Psi(U_-;\; \beta_1,\cdots, \beta_n),\\[2mm]
U_i=\ppsi_i(U_{i-1}; \beta_i),\qquad i=0,1,\cdots,m,m+p+1,\cdots,n,\\
\end{array}
\end{equation}
and
\begin{equation}\label{laxtypemapdf2}
U_{m+p}=\ppsi(U_m;\; \beta_{m+1},\cdots, \beta_{m+p}),
\end{equation}
where $\ppsi_i, \ppsi$, and $\Psi$ are smooth functions with respect to the respective parameter vectors and satisfy
\begin{eqnarray}\label{laxtypemapprop}
&&\ppsi_i(U_-;0)=U_-,\quad\frac{\partial\ppsi_i}{\partial \beta_i}(U_-;0)=\rr_i(U_-),\qquad i=0,1,\cdots,m,m+p+1,\cdots,n, \notag\\
&&\frac{\partial\ppsi}{\partial \beta_j}\in \mbox{\rm Ker}\big(\lambda(U)I-\nabla F(U)\big),\qquad j=m+1,\cdots,m+p,\\
&&\ppsi(U_-;0,\cdots,0)=U_-,\,\,\, \frac{\partial\ppsi}{\partial \beta_j}(U_-;0,\cdots,0)=\rr_j(U_-),\quad
j=m+1,\cdots,m+p. \notag
\end{eqnarray}
\end{lem}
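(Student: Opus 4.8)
The plan is to construct, for each $U_-$ in the compact set $K$, the Lax-type wave curves attached to every characteristic family, to compose them in order of increasing characteristic speed, and then to invert the resulting composite map by the implicit function theorem; the entropy solution is obtained by piecing the corresponding elementary waves together across the constant states. First I would handle the simple families $i\in\{1,\dots,m\}\cup\{m+p+1,\dots,n\}$. Since $\lambda_i$ is a simple eigenvalue, $\rr_i(U)$ is a smooth vector field near $K$ (normalized by \eqref{ngo3} when the $i$-th field is genuinely nonlinear), and I would define $\ppsi_i(U_-;\beta_i)$ in the classical way: for $\beta_i\ge 0$ as the point at parameter $\beta_i$ on the integral curve of $\rr_i$ through $U_-$ (the $i$-rarefaction curve), and for $\beta_i<0$ as the state joined to $U_-$ along the $i$-Hugoniot curve satisfying the Lax entropy inequalities. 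Computing the first two derivatives of the Hugoniot curve at $U_-$ shows that the shock and rarefaction branches agree to second order there, so $\ppsi_i$ is $C^2$ near $(U_-;0)$ with $\ppsi_i(U_-;0)=U_-$ and $\partial_{\beta_i}\ppsi_i(U_-;0)=\rr_i(U_-)$, and the speed of the resulting $i$-wave stays in a small neighbourhood of $\lambda_i(U_-)$; when the $i$-th field is linearly degenerate, the two branches coincide with the integral curve of $\rr_i$ and the same conclusions hold.

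The main work, and the step I expect to be the chief obstacle, is the central block corresponding to the $p$-fold eigenvalue $\lambda$. Because $\lambda$ has constant multiplicity $p$ with a linearly degenerate field, I would first show that the distribution $U\mapsto \mathrm{Ker}\big(\lambda(U)I-\nabla F(U)\big)$ is involutive — this is Boillat's integrability result, and it is exactly where the constant-multiplicity and linear-degeneracy hypotheses enter (in particular that $\nabla_U\lambda$ annihilates the whole $\lambda$-eigenspace, which holds by \eqref{ngo3} when $p=1$ and automatically when $p\ge 2$) — so that through each $U_-$ near $K$ there passes a $p$-dimensional integral manifold $\mathcal{M}(U_-)$ of this distribution. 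On $\mathcal{M}(U_-)$ the eigenvalue $\lambda$ is constant, and differentiating $F(U)-\lambda(U_-)\,U$ along curves tangent to the kernel and using $\nabla F\,\rr_j=\lambda\,\rr_j$ shows that this map is constant on $\mathcal{M}(U_-)$; hence the Rankine--Hugoniot relation $F(U')-F(U_-)=\lambda(U_-)(U'-U_-)$ holds for every $U'\in\mathcal{M}(U_-)$, so each such $U'$ is joined to $U_-$ by an admissible contact discontinuity moving with the single speed $\lambda(U_-)$. Choosing smooth coordinates $(\beta_{m+1},\dots,\beta_{m+p})$ on $\mathcal{M}(U_-)$ whose coordinate vector fields at $U_-$ are $\rr_{m+1}(U_-),\dots,\rr_{m+p}(U_-)$ then defines the map $\ppsi$ of \eqref{laxtypemapdf2}, which is smooth, fixes $U_-$ at the origin, and satisfies the tangency and kernel properties in \eqref{laxtypemapprop}. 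This $p$-parameter contact manifold is what replaces the single extra Hugoniot curve of the strictly hyperbolic Lax theory, and securing its smoothness together with the correct derivatives at $U_-$ is the delicate point.

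Finally I would set $\Psi(U_-;\beta_1,\dots,\beta_n)$ to be the composition of these maps in order of increasing speed: $U_0=U_-$, $U_i=\ppsi_i(U_{i-1};\beta_i)$ for $i=1,\dots,m$, $U_{m+p}=\ppsi(U_m;\beta_{m+1},\dots,\beta_{m+p})$, $U_i=\ppsi_i(U_{i-1};\beta_i)$ for $i=m+p+1,\dots,n$, and $U_n=\Psi(U_-;\beta)$. By the previous steps this is $C^2$ jointly in $(U_-;\beta)$, and by the chain rule the $j$-th column of $\partial_\beta\Psi(U_-;0)$ is $\rr_j(U_-)$; since $\{\rr_j(U_-)\}_{j=1}^n$ is a basis of $\mathbb{R}^n$, this Jacobian is nonsingular with $|\det|$ bounded below uniformly for $U_-\in K$, so the implicit function theorem yields a single $\delta>0$ valid over $K$ together with a unique small $C^1$ solution $(\beta_1,\dots,\beta_n)$ of $\Psi(U_-;\beta)=U_+$ whenever $U_-\in K$ and $|U_+-U_-|\le\delta$, which is \eqref{laxtypemapdf1}--\eqref{laxtypemapdf2}. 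For such data the successive wave speeds remain close to $\lambda_1(U_-)<\dots<\lambda_m(U_-)<\lambda(U_-)<\dots<\lambda_n(U_-)$ and are therefore strictly increasing from one wave to the next (a rarefaction fan occupies only a thin angular sector, while a shock and the central contact each move at a single speed), so the $n-p+1$ elementary waves do not overlap; piecing them together across the $n-p+2$ constant states $U_0,\dots,U_m,U_{m+p},\dots,U_n$ produces a self-similar weak solution of the system in \eqref{ngo1} with the Riemann data \eqref{initial data} whose every wave is admissible by construction, hence the entropy solution, and uniqueness of the decomposition is precisely the uniqueness statement in the implicit function theorem. I would refer to \cite{ky-aml2003143} for the detailed estimates behind each of these steps.
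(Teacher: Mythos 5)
Your proposal is essentially correct, but note that the paper does not actually prove this lemma: it is stated as a consequence of the results in \cite{ky-aml2003143}, so the ``paper's proof'' is a citation. What you have written is the standard construction that underlies that reference, and it fills in the details the paper omits: classical Lax wave curves (with the $C^2$ matching of the shock and rarefaction branches) for the simple families; for the $p$-fold eigenvalue, linear degeneracy of the whole block (which the paper itself re-derives later as Lemma \ref{lem:3.3a}) plus the Frobenius/Boillat involutivity of $U\mapsto \mbox{\rm Ker}\big(\lambda(U)I-\nabla F(U)\big)$, giving a $p$-dimensional integral manifold on which $\lambda$ and $F(U)-\lambda(U_-)U$ are constant, hence a $p$-parameter family of admissible contact discontinuities all travelling at the single speed $\lambda(U_-)$; and finally composition in order of increasing speed and the implicit function theorem, with $\delta$ uniform over $K$ by compactness. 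Two small points deserve care. First, the involutivity you invoke is not an extra hypothesis but follows from the same commutator computation as in Lemma \ref{lem:3.3a}: applying $\rr_j\cdot\nabla$ to $\nabla F\,\rr_k=\lambda\rr_k$, antisymmetrizing in $j,k$, and using $\rr_j\cdot\nabla\lambda=\rr_k\cdot\nabla\lambda=0$ gives $(\nabla F-\lambda I)[\rr_j,\rr_k]=0$; it would strengthen the write-up to say so. Second, the conclusion $\frac{\partial\ppsi}{\partial\beta_j}\in\mbox{\rm Ker}\big(\lambda(U)I-\nabla F(U)\big)$ is required at every $\beta$, not only at $\beta=0$, so the coordinates on $\mathcal{M}(U_-)$ should be taken concretely as composed flows of the fields $\rr_{m+1},\dots,\rr_{m+p}$ (or any parametrization whose coordinate vector fields remain tangent to the integral manifold); your phrasing covers this implicitly, but it is the one spot where a casual choice of chart would not suffice. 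With these clarifications the argument is complete and consistent with what the paper imports from \cite{ky-aml2003143}.
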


From Lemma \ref{prop:Riemann systems}, one can follow the approach in \cite{b-book2000} and \cite{s-book1983}
to obtain
the existence and stability of the unique entropy solution that is the standard Riemann semigroup (SRS).

\begin{lem}[Existence and Stability of SRS]\label{prop:stability}
Assume that $F(U)$ satisfies the same assumptions as in Theorem {\rm \ref{thm:compacted initial data}}.
Then there is a suitably small $\delta_0>0$ such that, given any
$\bar{U}\in L^1(\mathbb{R};\,\mathbb{R}^n)$ with $TV(\bar{U})<\delta_0$,
there exists an entropy solution $U(t,x)=S_t(x)$ by the wave-front tracking method or the Glimm scheme.
The map $S: [0,\infty[\times\mathcal{D}\longmapsto\mathcal{D}$ satisfies that,
for all $\bar{U},\bar{V}\in\mathcal{D}, s, t\ge 0$,
\begin{eqnarray}
&&S_0\bar{U}=\bar{U},\quad S_tS_s\bar{U}=S_{t+s}\bar{U},\\[2mm]
&& \|S_t\bar{U}-S_s\bar{V}\|_{L^1}\le L_1\,\|\bar{U}-\bar{V}\|_{L^1}+L_2\,|t-s|
\,\quad \mbox{for some constants $L_1$ and $L_2$},\label{semigroup property}
\end{eqnarray}
such that the solution is a SRS.
Furthermore, for any other SRS $\, \tilde{S}:\tilde{\mathcal{D}}\times[0,\infty[\longmapsto\tilde{\mathcal{D}}$,
defined on a domain $\tilde{\mathcal{D}}\supset\mathcal{D}$, we have
$$
\tilde{S}_t\bar{U}=S_t\bar{U} \qquad \mbox{for any}\,\, ~\bar{U}\in\mathcal{D},\;t\ge 0.
$$
\end{lem}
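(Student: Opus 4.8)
The plan is to follow the classical program of Bressan \cite{b-book2000} and Smoller \cite{s-book1983} for the existence and $L^1$-stability of entropy solutions, inserting the modifications needed to accommodate the $p$-fold linearly degenerate family isolated in \eqref{1.2}. First I would use the Riemann solver furnished by Lemma \ref{prop:Riemann systems} as the elementary building block of a wave-front tracking algorithm: for piecewise constant data of small total variation I solve each Riemann problem by the $n-p+1$ elementary waves of Lemma \ref{prop:Riemann systems}, split each rarefaction into a fan of small jumps travelling with the characteristic speed, and, when two fronts cross, replace the exact Riemann solution by a simplified or accurate solver so that the number of fronts and of generation orders stays finite (Bressan \cite{b-book2000}, Ch.~7); the Glimm scheme (Smoller \cite{s-book1983}) works equally well. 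The structural point is that the $p$ contact discontinuities of the degenerate family all travel with the single speed $\lambda(U)$ and are produced by the map $\boldsymbol\psi$ of \eqref{laxtypemapdf2}, so they are collected into one composite contact front carrying the parameters $(\beta_{m+1},\dots,\beta_{m+p})$; the front-tracking bookkeeping is then that of an $(n-p+1)$-field system in which exactly one field is linearly degenerate.

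Second, I would establish the uniform BV bounds. One introduces the total wave strength $V$ and the interaction potential $Q$ summing over pairs of approaching fronts, where two fronts of the degenerate family are \emph{not} counted as approaching since they share the speed $\lambda$. The interaction estimates follow from the smoothness of $\boldsymbol\psi_i,\boldsymbol\psi,\Psi$ in Lemma \ref{prop:Riemann systems} together with \eqref{laxtypemapprop}; in particular the linear degeneracy of the composite family kills its self-interaction term. Hence, for $C_0$ large and $TV(\bar U)<\delta_0$ small, the Glimm functional $\Upsilon=V+C_0Q$ is non-increasing along the approximate solutions $U^\nu$, giving a uniform bound on $TV(U^\nu(t,\cdot))$ and, with finite propagation speed, an $L^1$–Lipschitz bound in $t$. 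By Helly's theorem a subsequence converges in $L^1_{loc}$ to some $U(t,x)$, which is a Lax-admissible weak solution because the residuals of the approximate equations and the entropy inequalities tend to $0$. The semigroup identities \eqref{seimgroup property} and consistency with the Riemann solver then follow from the time-translation invariance of the scheme and the standard limiting argument (Bressan \cite{b-book2000}, Ch.~8); this defines $S_t$ on the closure $\mathcal{D}$ of the set of piecewise constant data with $TV<\delta_0$.

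Third --- and this is where I expect the main obstacle --- I would prove the $L^1$-Lipschitz continuity \eqref{semigroup property}, i.e.\ that $S_t$ is a standard Riemann semigroup. The route is the weighted $L^1$ functional of Bressan--Liu--Yang as presented in \cite{b-book2000}: given two front-tracking solutions $u,v$, join $u(t,\cdot)$ to $v(t,\cdot)$ by a path $\theta\mapsto\omega(\theta,\cdot)$ built from shock curves, decompose $\partial_\theta\omega$ into scalar components $q_i$ along the $i$-th (composite) field, and set $\Phi(u,v)=\sum_i\iint |q_i(\theta,x)|\,W_i(\theta,x)\,d\theta\,dx$, with weights $W_i$ increasing in the amount of wave of $u$ and $v$ approaching the $i$-th component at $x$. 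One shows that, along the approximations, $\frac{d}{dt}\Phi$ is bounded by error terms that are summable and vanish as the front-tracking parameter $\to 0$, so that $\Phi$ is equivalent to $\|u-v\|_{L^1}$ and essentially non-increasing; passing to the limit yields \eqref{semigroup property} with a constant $L_1$. The points specific to our setting are: (i) the decomposition along the $p$-fold degenerate family must be taken in $\mathrm{Ker}\big(\lambda(U)I-\nabla F(U)\big)$ via \eqref{laxtypemapprop}, and one must check that the source and transversal terms in the evolution of the corresponding $q_i$ are controlled purely by linearity --- there is no $\nabla\lambda\cdot\rr$ contribution --- so the weight for that family can be taken constant; and (ii) the speed coincidences inside the degenerate family require the usual care in the definitions of ``approaching'' and in the estimates for crossing fronts. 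With $L_1$ at hand, the bound $L_2|t-s|$ is immediate from the uniform bound on total wave strength and finite speed. Finally, uniqueness among semigroups follows from the error formula \eqref{semigroup estimate}: any other SRS $\tilde S$ on $\tilde{\mathcal D}\supset\mathcal D$ coincides with $S$ on piecewise constant data by the consistency condition and a short induction over interaction times, hence on all of $\mathcal D$ by $L^1$-density together with \eqref{lip continuity}. This completes the proof.
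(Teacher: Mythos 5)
Your proposal is correct and follows essentially the same route as the paper, which itself gives no detailed argument but simply invokes Lemma \ref{prop:Riemann systems} together with the standard existence/stability machinery of Bressan \cite{b-book2000} and Smoller \cite{s-book1983} (as adapted to constant-multiplicity, linearly degenerate fields in \cite{ky-aml2003143}). Your sketch --- composite contact front for the $p$-fold degenerate family, Glimm functional with degenerate fronts not self-approaching, the weighted $L^1$ functional for \eqref{semigroup property}, and uniqueness of the SRS via the error formula --- is precisely the program the authors are citing, filled in with the correct modifications.
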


In Lemma \ref{prop:stability}, the domain $\mathcal{D}$ is defined by the Glimm functional;
see the definition in \cite{b-book2000} (Chapter 7, pp. 151).

\subsection{The Wave-Front Tracking Scheme for Scaler Equations}\label{sec:scalar scheme}

We adopt the following scheme and related results from \cite{b-book2000}; also see \cite{d-jmaa197233} for the details.
For our problem, we assume that the scalar flux function $f$ is a convex function.
For fixed integer $\nu\ge 1$, we consider the piecewise constant initial data $\bar{u}$ taking values
within the discrete set $2^{-\nu}\mathbb{Z}\doteq \{2^{-\nu}j\,:\, \mbox{j integer} \}$
and define $f_\nu$ to be the piecewise affine function that coincides with $f$
at all nodes $2^{-\nu}j$ with $j$ integer:
$$
f_{\nu}(s)=\frac{s-2^{-\nu}}{2^{-\nu}} f(2^{-\nu}(j+1))+\frac{2^{-\nu}(j+1)-s}{2^{-\nu}}f(2^{-\nu}j),
\quad\; s\in[2^{-\nu}j,\;2^{-\nu}(j+1)].
$$

For the scheme, we consider the Cauchy problem:
\begin{equation}\label{scalar scheme equation}
\partial_t u+ \partial_x f_{\nu}(u)=0
\end{equation}
with initial data $\bar{u}$.
First we consider the Riemann problem with initial data $\bar{u}$ as in \eqref{initial data} and
$u_{\pm}\in2^{-\nu}\mathbb{Z}$ to obtain the following solutions:
\begin{description}
\item[$\bullet$ Case 1. $u_-<u_+$] We define the increasing sequence of jump speeds as
$$
\lambda_{l}=\frac{f_{\nu}(w_l)-f_{\nu}(w_{l-1})}{w_l-w_{l-1}}, \qquad l=1,\cdots,q,
$$
where $\{w_i\}_{i=1}^q$ are the jump points of $f_{\nu}$ and satisfy $w_0\doteq u_-<w_1<\cdots<w_q\doteq u_+$.
Then we can obtain an entropy solution of the above Riemann problem
\eqref{scalar scheme equation} and \eqref{initial data} as follows:
\begin{equation}\label{rarefaction solution for scheme}
w(t,x):= \left\{\begin{array}{l}
                 u_-\qquad \mbox{if}~x<\lambda_1 t,\\
                 w_l\qquad\, \mbox{if}~\lambda_lt <x<\lambda_{l+1} t,\; 1\le l\le q-1,\\
                 u_+\qquad \mbox{if}~ x>\lambda_q t.
\end{array}\right.
\end{equation}

\item[$\bullet$ Case 2. $u_->u_+$] In this case, we define the shock speed as
$$
\lambda=\frac{f_{\nu}(u_+)-f_{\nu}(u_-)}{u_+-u_-}.
$$
Thus, we can also obtain an entropy solution as the previous case:
\begin{equation}\label{shock solution for scheme}
w(t,x):= \left\{\begin{array}{l}
                 u_- \qquad \mbox{if}~x<\lambda t,\\
                 u_+ \qquad \mbox{if}~ x>\lambda t.
\end{array}\right.
\end{equation}
\end{description}
Next, consider a more general Cauchy problem for \eqref{scalar scheme equation}
with piecewise constant initial data $\bar{u}$, taking values within the set $2^{-\nu}\mathbb{Z}$.
We can construct the solution by solving the corresponding Riemann problems so that
the total number of interactions is finite and the solution can be prolonged for all $t\ge 0$.
For these solutions $u_\nu(t,x)$,
we have the following properties:
\begin{eqnarray}\label{scheme scalar solution property}
&&TV(u_{\nu}(t,\cdot))\le TV(\bar{u}),\quad \|u_{\nu}(t,\cdot)\|_{L^{\infty}}\le||\bar{u}||_{L^{\infty}} \,
   \qquad\quad \mbox{for any} \,\, t\ge 0,\\[2mm]
&&\|u_{\nu}(t,\cdot)-u_{\nu}(t',\cdot)\|_{L^1}\le L\, TV(\bar{u})\, |t-t'| \,\quad \qquad \mbox{for  any}\,\, t,t'\ge 0,
\end{eqnarray}
where $L$ is the Lipschitz constant such that
$$
|f(w)-f(w')|\le L|w-w'|\qquad \mbox{for any}\,\, w,w'\in[-M,M].
$$
Then, using Helly's theorem, we obtain an entropy
solution $u=u(t,x)$ defined for all $t\geq 0$ by compactness, with
\begin{equation}\label{scalar solution property}
TV(u(t,\cdot))\le TV(\bar{u}),\quad \|u(t,\cdot)\|_{L^{\infty}}\le \|\bar{u}\|_{L^{\infty}}
\qquad \quad \mbox{for any}\,\, t\ge 0.
\end{equation}

We remark here that, since the scalar flux function in our case in \S 3-- \S 6 is quadratic,
the front-tracking algorithm could be applied directly.
However, we adopt the piecewise-linear approximation in our analysis
in \S 3--\S 6 so that it is more convenient to compare the solutions to present our approach.

\section{Comparison of the Riemann Solvers}\label{sec:riemann comparison}

In this section we compare the Riemann solvers to system \eqref{ngo1}
and the geometric optics expansion defined by \eqref{og expansion} with the same initial data.
From now on, we denote $\lambda_j^0=\lambda_j(U^0)$ and $\rr_j^0=\rr_j(U^0)$ through the paper.

\subsection{Comparison for the Genuinely Nonlinear Case}

First, we consider some properties of approximate solutions to the Burgers equation.

\begin{lem}\label{lem:burger's riemann problem}
For some $1\le k\le n$, assume that, for fixed $\nu$, $\sigma_{\nu}^{(k)}(\tau,y)$
is an approximate solution to the inviscid Burgers equation:
$$
\partial_{\tau}w +\frac{1}{2}\partial_y (w^2)=0,
$$
constructed by the front tracking method.
At the jump point $(\tau_0,y_0)$, suppose that
$\sigma^{(k)}_{\nu -}=\sigma^{(k)}_{\nu}(\tau_0,y_{0}-),~\sigma^{(k)}_{\nu +}=\sigma^{(k)}_{\nu}(\tau_0,y_{0}+)$,
$\sigma^{(k)}_{\nu +}=\sigma^{(k)}_{\nu -}+\sigma$,
and $(t_0,x_0)=(\frac{\tau_0}{\varepsilon}, y+\lambda_k^0\frac{\tau_0}{\varepsilon})$
is the corresponding point in the $(t,x)$-coordinates
to the point $(\tau_0,y_0)$.
Then
\begin{enumerate}
\item[\rm (i)] If $\sigma<0$, the slope of the discontinuity line {of solutions
to the inviscid Burgers equation} in the $(t,x)$--coordinates is
       \begin{equation}\label{slope of scalar shock }
\dot{S}_{B(t,x)}(\sigma)=\lambda_k^0+\sigma_{\nu-}^{(k)}\varepsilon  +\frac{\sigma}{2}\varepsilon;
       \end{equation}
\item[\rm (ii)] If $\sigma>0$, then $\sigma=(\sigma_{\nu+}^{(k)}-\sigma_{\nu-}^{(k)})\, 2^{-\nu}$,
        and the slope of the discontinuity line {of solutions to
the inviscid Burgers equation} in the $(t,x)$--coordinates is
       \begin{equation}\label{slope of scalar rarefaction }
       \lambda^{(k)}_{B(t,x)}(\sigma)=\lambda_k^0+\sigma_{\nu-}^{(k)}\varepsilon +\frac{\sigma}{2}\varepsilon.
       \end{equation}
\end{enumerate}
\end{lem}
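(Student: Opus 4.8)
The plan is to compute the propagation speed of the front in the slow variables $(\tau,y)$ directly from the front-tracking Riemann solver of \S\ref{sec:scalar scheme} applied to the quadratic flux $f(w)=\tfrac12 w^2$, and then to push this speed forward to the $(t,x)$-plane through the affine change of coordinates $\tau=\varepsilon t$, $y=x-\lambda_k^0 t$, which identifies a curve $y=Y(\tau)$ in the $(\tau,y)$-plane with the curve $x=Y(\varepsilon t)+\lambda_k^0 t$ in the $(t,x)$-plane.

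The first step I would carry out is the elementary observation that $\sigma_\nu^{(k)}$ takes values in the grid $2^{-\nu}\mathbb Z$, so both states $\sigma_{\nu\pm}^{(k)}$ at $(\tau_0,y_0)$ are nodes of $f_\nu$; hence $f_\nu(\sigma_{\nu\pm}^{(k)})=f(\sigma_{\nu\pm}^{(k)})=\tfrac12(\sigma_{\nu\pm}^{(k)})^2$, and the chord of $f_\nu$ joining these two values has slope
\begin{equation*}
\frac{f_\nu(\sigma_{\nu+}^{(k)})-f_\nu(\sigma_{\nu-}^{(k)})}{\sigma_{\nu+}^{(k)}-\sigma_{\nu-}^{(k)}}
=\frac{\sigma_{\nu+}^{(k)}+\sigma_{\nu-}^{(k)}}{2}=\sigma_{\nu-}^{(k)}+\frac{\sigma}{2}.
\end{equation*}
In case (i) ($\sigma<0$, so $\sigma_{\nu-}^{(k)}>\sigma_{\nu+}^{(k)}$), by Case 2 of \S\ref{sec:scalar scheme} the front is a single shock tracked with exactly this Rankine--Hugoniot speed. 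In case (ii) ($\sigma>0$) it is one elementary front of a rarefaction fan, which in the scheme connects two consecutive grid values (so $\sigma=2^{-\nu}$) and, by Case 1 of \S\ref{sec:scalar scheme}, is tracked with the same chord speed $\sigma_{\nu-}^{(k)}+\tfrac{\sigma}{2}$. Thus, in both cases, the discontinuity line in the $(\tau,y)$-plane satisfies $\dot y=\sigma_{\nu-}^{(k)}+\tfrac{\sigma}{2}$.

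It then remains only to transport this speed. Since the front passes through $(\tau_0,y_0)$, the corresponding discontinuity line in the $(t,x)$-plane passes through $(t_0,x_0)=(\tau_0/\varepsilon,\, y_0+\lambda_k^0\tau_0/\varepsilon)$, and along it
\begin{equation*}
\frac{dx}{dt}=\varepsilon\,\dot y+\lambda_k^0=\lambda_k^0+\varepsilon\,\sigma_{\nu-}^{(k)}+\frac{\varepsilon\sigma}{2},
\end{equation*}
which is \eqref{slope of scalar shock } when $\sigma<0$ and \eqref{slope of scalar rarefaction } when $\sigma>0$.

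I expect no genuine obstacle in this lemma: it is a direct computation. The only points demanding a little care are that the jump states lie on the grid $2^{-\nu}\mathbb Z$ (so that the piecewise-linear flux $f_\nu$ reproduces the true Burgers speed $\tfrac12(w_-+w_+)$ for shocks and for elementary rarefaction fronts alike) and the correct orientation of the change of variables $\tau=\varepsilon t$, $y=x-\lambda_k^0 t$.
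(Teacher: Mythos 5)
Your proposal is correct and follows essentially the same route as the paper: compute the front speed $y'(\tau)=\sigma_{\nu-}^{(k)}+\tfrac{\sigma}{2}$ in the $(\tau,y)$-plane (Rankine--Hugoniot for the shock, the scheme's chord speed from \eqref{rarefaction solution for scheme} for the rarefaction front) and then transfer it to the $(t,x)$-plane via $\tau=\varepsilon t$, $y=x-\lambda_k^0 t$. Your added remark that the jump states lie on the grid $2^{-\nu}\mathbb{Z}$, so that $f_\nu$ agrees with $f$ there and the chord slope is exactly $\tfrac12(\sigma_{\nu-}^{(k)}+\sigma_{\nu+}^{(k)})$, is a useful detail the paper leaves implicit.
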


\begin{proof} First, we deduce from \S \ref{sec:scalar scheme}
that {the slope of the discontinuity line of solutions to
the inviscid Burgers equation in the $(\tau,y)$--coordinates is}
$$
y'(\tau)=\sigma_{\nu-}^{(k)}+\frac{\sigma}{2}
$$
from the Rankine-Hugoniot condition when $\sigma<0$, and
$$
y'(\tau)=\sigma_{\nu-}^{(k)}+\frac{\sigma}{2}
$$
from
\eqref{rarefaction solution for scheme} when $\sigma>0$.
Using the fact that, if $(t,x)$ is a point on the discontinuity
line with the corresponding point $(\tau,y)$ in the $(\tau,y)$-coordinates, we have
$$
\left\{
\begin{array}{l}
y=x-\lambda_k^0 t,\\[2mm]
\tau=\varepsilon t,
\end{array}
\right. \qquad \mbox{and} \qquad
\left\{
\begin{array}{l}
\dot{S}_{B(t,x)}(\sigma)=\frac{x-x_0}{t-t_0},\quad \sigma <0,\\[2mm]
\lambda_{B(t,x)}^{(k)}(\sigma)=\frac{x-x_0}{t-t_0},\quad \sigma >0.
\end{array}
\right.
$$
This completes the proof.
\end{proof}

Next, we consider the Riemann problem to system \eqref{ngo1} with the corresponding initial data:
\begin{equation}\label{systems corresponding initial data}
U|_{_{t=t_0}}=U^0+\varepsilon \sum_{j=1}^n\sigma^{(j)}_{\nu}(\varepsilon t_0,x-\lambda_j^0\,t_0)\,\rr_j^0.
\end{equation}

\begin{lem}\label{lem:genius nonlinear riemann problem}
Assume that $F(U)$ satisfies all the assumptions as in Theorem {\rm \ref{thm:compacted initial data}},
the $k$-th characteristic field is genuinely nonlinear,
and $\sigma_{\nu}^{(k)}$ has a jump point as {\rm Lemma \ref{lem:burger's riemann problem}}.
Let $(t_0,x_0)$ be the corresponding point to $(\tau_0,y_0)$,
$U^{\varepsilon}_{\pm}=U^0+\varepsilon\sum_{j=1}^n\sigma_{\nu_{\pm}}^{(j)}\rr_j^0$, and
$U^{\varepsilon}_+=\Phi(U_-^{\varepsilon};\beta_1,\cdots,\beta_n)$, where
$\sigma^{(j)}_{\nu+}=\sigma^{(j)}_{\nu-}$ are constant valued functions near $(\tau_0,y_0)$ when $j\neq k$.
Then we have
$$
\beta_j(\sigma,\varepsilon)
=\sigma\varepsilon\delta_{jk} +O(1)\sigma(\max_j|\sigma_{\nu-}^{(j)}|+\sigma)\,\varepsilon^2,
$$
where $\delta_{jk}$ is the Kronecker delta.
\end{lem}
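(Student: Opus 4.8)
The plan is to compute the parameter vector $(\beta_1,\dots,\beta_n)$ in the decomposition $U^\varepsilon_+=\Phi(U^\varepsilon_-;\beta_1,\dots,\beta_n)$ by a Taylor expansion in $\varepsilon$, exploiting that the two states $U^\varepsilon_\pm$ differ only by a small amount of size $O(\varepsilon)$ along the single direction $\rr_k^0$ (since $\sigma^{(j)}_{\nu+}=\sigma^{(j)}_{\nu-}$ for $j\neq k$). Concretely, write $U^\varepsilon_-=U^0+\varepsilon a$ and $U^\varepsilon_+=U^0+\varepsilon a+\sigma\varepsilon\,\rr_k^0$, where $a=\sum_j\sigma^{(j)}_{\nu-}\rr_j^0$, so that $U^\varepsilon_+-U^\varepsilon_-=\sigma\varepsilon\,\rr_k^0$ exactly. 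First I would recall from Lemma \ref{prop:Riemann systems} that the map $\Phi(U;\beta_1,\dots,\beta_n)$ (here the Lax-type map $\Psi$ built from the elementary waves) is smooth in $(\beta_1,\dots,\beta_n)$ near $0$, with $\Phi(U;0)=U$ and $\partial_{\beta_j}\Phi(U;0)=\rr_j(U)$; in particular the map $\beta\mapsto\Phi(U^\varepsilon_-;\beta)$ has Jacobian at $\beta=0$ equal to the matrix $R(U^\varepsilon_-)=[\rr_1(U^\varepsilon_-)|\cdots|\rr_n(U^\varepsilon_-)]$, which is invertible with inverse rows $\ll_j(U^\varepsilon_-)$.

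Next I would invert: since $\Phi(U^\varepsilon_-;\beta)-U^\varepsilon_-=R(U^\varepsilon_-)\beta+O(|\beta|^2)$ and the left-hand side must equal $\sigma\varepsilon\,\rr_k^0$, the implicit function theorem gives $|\beta|=O(\sigma\varepsilon)$ and, to leading order, $\beta_j=\ll_j(U^\varepsilon_-)\cdot(\sigma\varepsilon\,\rr_k^0)+O((\sigma\varepsilon)^2)$. Now expand $\ll_j(U^\varepsilon_-)=\ll_j(U^0)+O(\varepsilon|a|)=\ll_j(U^0)+O(\varepsilon\max_j|\sigma^{(j)}_{\nu-}|)$, so $\ll_j(U^\varepsilon_-)\cdot\rr_k^0=\ll_j(U^0)\cdot\rr_k(U^0)+O(\varepsilon\max_j|\sigma^{(j)}_{\nu-}|)=\delta_{jk}+O(\varepsilon\max_j|\sigma^{(j)}_{\nu-}|)$, by the biorthonormality \eqref{ngo2}. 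Substituting, $\beta_j=\sigma\varepsilon\,\delta_{jk}+O(1)\sigma\varepsilon\cdot\varepsilon\max_j|\sigma^{(j)}_{\nu-}|+O(\sigma^2\varepsilon^2)$, and collecting the two error contributions into $O(1)\,\sigma(\max_j|\sigma^{(j)}_{\nu-}|+\sigma)\,\varepsilon^2$ gives exactly the claimed formula. The quadratic-remainder bound $O(|\beta|^2)$ needs $F\in C^2$, which is assumed; one should track that all constants depend only on $F$ and on the compact set containing the relevant states (via Lemma \ref{prop:Riemann systems}), not on $\nu$, $\varepsilon$, or the particular jump.

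The one point requiring care is that $\Phi$ here is not literally a single $C^2$ map of $(\beta_1,\dots,\beta_n)$ when the $k$-th (or other) wave is a shock, since the shock curves are only $C^2$ in general and, more importantly, the construction in Lemma \ref{prop:Riemann systems} splits off the $p$-fold linearly degenerate block through the separate map $\ppsi$. The main obstacle, then, is to argue that the composite Lax map still admits a first-order expansion with a genuine $O(|\beta|^2)$ remainder uniformly. I would handle this by invoking the smoothness statements in \eqref{laxtypemapprop}: $\ppsi_i(\cdot;\beta_i)$ and $\ppsi(\cdot;\beta_{m+1},\dots,\beta_{m+p})$ are smooth in their parameters with the stated first derivatives, and their composition $\Psi$ is therefore $C^2$ in $(\beta_1,\dots,\beta_n)$ with Jacobian $R(U^\varepsilon_-)$ at $\beta=0$; the Lax construction guarantees that, for data of size $O(\varepsilon)$, the solution selects the branch on which these smooth curves apply, so the shock-curve regularity issue does not actually arise at this order. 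With that in hand the inversion above is a direct application of the implicit function theorem and the proof concludes.
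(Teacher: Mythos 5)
Your proposal is correct and follows essentially the same route as the paper: both set up the equation $\Phi(U^\varepsilon_-;\beta)-U^\varepsilon_-=\sigma\varepsilon\,\rr_k^0$, use $\partial_{\beta_j}\Phi(U^\varepsilon_-;0)=\rr_j(U^\varepsilon_-)$ from Lemma \ref{prop:Riemann systems} to invert at first order via the dual frame $\ll_j(U^\varepsilon_-)$, pick up the $\delta_{jk}+O(\varepsilon\max_j|\sigma^{(j)}_{\nu-}|)$ coefficient from biorthonormality and a first-order expansion of the eigenvector frame, and absorb the $O((\sigma\varepsilon)^2)$ Taylor remainder into the stated error term. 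The only cosmetic difference is that you expand $\ll_j(U^\varepsilon_-)$ about $U^0$ whereas the paper expands $\rr_k^0$ about $U^\varepsilon_-$; the outcome is identical.
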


\begin{proof}
Let $\theta=\varepsilon\sigma$. From the definition,
$U^{\varepsilon}_+=U^{\varepsilon}_-+\theta\rr_k^0$.
Then
$$
\Phi(U_-^{\varepsilon}; \beta_1,\cdots,\beta_n)-U_-^{\varepsilon}=\theta\rr_k^0.
$$
If $\theta=0$, we deduce from $\Phi(U_-^{\varepsilon};\beta_1,\cdots,\beta_n)=U_-^\varepsilon$
that $\beta_j|_{\theta=0}=0$.
We differentiate both sides of the above equation with respect to $\theta$ to obtain
$$
\sum_j\frac{\partial\Phi}{\partial\beta_j}\frac{\partial\beta_j}{\partial\theta}=\rr^0_k,
$$
From Lemma \ref{prop:Riemann systems}, we know
$$
\left.\frac{\partial\Phi}{\partial\beta_j}\right|_{\theta=0}
=\left.\frac{\partial\Phi}{\partial\beta_j}\right|_{\beta=0}=\rr_k(U_-^{\varepsilon}).
$$
Then we find
\begin{eqnarray}
&&\left.\sum_j\frac{\partial\Phi}{\partial\beta_j}\right|_{\theta=0}\left.
   \frac{\partial\beta_j}{\partial\theta}\right|_{\theta=0}\nonumber\\
&&=\rr_k^0\nonumber\\
&&=\rr_k(U^{\varepsilon}_- -\varepsilon\sum_{j=1}^n\sigma_{\nu-}^{(j)}\rr_j^0)\nonumber\\
&&=\rr_k(U_-^{\varepsilon})
   -\varepsilon\sum_{j=1}^n(\rr_j^0\cdot\nabla)\rr_k(U_-^{\varepsilon})\sigma_{\nu-}^{(j)}+O(1)\varepsilon^2
\nonumber\\
&&=\rr_k(U_-^{\varepsilon})+O(1)\max_j|\sigma_{\nu-}^{(j)}|\,\varepsilon. \label{3.4a}
\end{eqnarray}

Taking the dot product both sides of \eqref{3.4a} with $\ll_j(U_-^{\varepsilon})$, we obtain
$$
\left.\frac{\partial\beta_j}{\partial\theta}\right|_{\theta=0}
=\delta_{jk}+O(1)\max_j|\sigma_{\nu-}^{(j)}|\,\varepsilon.
$$
Then we have
\begin{eqnarray*}
\beta_j(\theta,\;\varepsilon)=\delta_{jk}\theta
+O(1)\max_j|\sigma_{\nu-}^{(j)}|\theta \varepsilon+O(1)\theta^2\\
=\delta_{jk}\sigma\varepsilon+O(1)\sigma\big(\max_j|\sigma_{\nu-}^{(j)}|+\sigma\big)\varepsilon^2.
\end{eqnarray*}
This completes the proof.
\end{proof}

With Lemmas \ref{lem:burger's riemann problem}--\ref{lem:genius nonlinear riemann problem},
we have

\begin{prop}\label{lem:local L1 estimates for genius nonlinear}
Assume that $F(U)$ satisfies the assumptions as in Theorem {\rm \ref{thm:compacted initial data}},
the $k$-th characteristic field is genuinely nonlinear,
$\sigma_{\nu}^{(k)}$ is a piecewise constant function
as  Lemma {\rm \ref{lem:burger's riemann problem}} and has a jump point at $(\tau_0,y_0)$,
$(t_0,x_0)$ is the corresponding point,
and $(\tau_0,y_0)$ is not a jump point of $\sigma_{\nu}^{(j)}$ for $j\ne k$.
Then, for every $\hat{\lambda}>2\max_{|U|\le M}\lambda_k(U)$ with $M$ being the maximum of
the solution for \eqref{ngo1}, if $h$ is sufficiently small, we have
\begin{equation}\label{3.5a}
\int_{x_0-\hat{\lambda}h<x<x_0+\hat{\lambda}h}
|S_h(U_{w,\nu}^{\varepsilon}(t_0,\cdot))-U_{w,\nu}^{\varepsilon}(t_0+h,x)|
\mbox{d}x
\le C\sigma(\sigma+\max_j|\sigma_{\nu-}^{(j)}|)h \varepsilon^2,
\end{equation}
where
$$
U_w^{\varepsilon}(t,x):=U^0+\varepsilon\sum_{j=1}^n\sigma_{\nu}^{(j)}(\varepsilon t,x-\lambda_j^0\,t)\,\rr_j^0,
$$
and $C>0$ is a constant independent of $\varepsilon$ and $t$.
\end{prop}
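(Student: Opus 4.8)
The plan is to solve, on the window $Q_h:=(x_0-\hat\lambda h,\;x_0+\hat\lambda h)$, the exact Riemann problem of \eqref{ngo1} with data $U_{w,\nu}^{\varepsilon}(t_0,\cdot)$ and to compare it, term by term up to order $\varepsilon^2$, with the geometric-optics profile $U_{w,\nu}^{\varepsilon}(t_0+h,\cdot)$. Since $(\tau_0,y_0)$ is a jump point of $\sigma_\nu^{(k)}$ only and all $\sigma_\nu^{(j)}$ are piecewise constant, for $h$ small the datum $U_{w,\nu}^{\varepsilon}(t_0,\cdot)$ is constant on $(x_0-\rho,x_0)$ and on $(x_0,x_0+\rho)$ for a fixed $\rho>0$, with left and right values $U^{\varepsilon}_{\pm}=U^0+\varepsilon\sum_j\sigma^{(j)}_{\nu\pm}\rr_j^0$ satisfying $U^{\varepsilon}_+-U^{\varepsilon}_-=\theta\,\rr_k^0$, $\theta=\varepsilon\sigma$. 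With $\hat\lambda$ as in the statement and $h$ small, $S_h(U_{w,\nu}^{\varepsilon}(t_0,\cdot))$ on $Q_h$ is the self-similar Riemann solution of \eqref{ngo1} centered at $(t_0,x_0)$, while $U_{w,\nu}^{\varepsilon}(t_0+h,\cdot)$ equals $U^{\varepsilon}_-$ to the left and $U^{\varepsilon}_+$ to the right of its $k$-front (or $k$-fan), the latter moving with the speed of Lemma \ref{lem:burger's riemann problem}. It therefore suffices to bound the $L^1$-difference of these two explicit profiles over $Q_h$.

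First I would dispose of the waves of the other families. By Lemma \ref{lem:genius nonlinear riemann problem}, the exact Riemann solution consists of a $k$-wave of strength $\beta_k=\sigma\varepsilon+O(1)\sigma(\max_j|\sigma^{(j)}_{\nu-}|+\sigma)\varepsilon^2$ together with, for $j\ne k$, waves of strengths $|\beta_j|=O(1)\sigma(\max_j|\sigma^{(j)}_{\nu-}|+\sigma)\varepsilon^2$. Hence every intermediate constant state of this fan lies within $O(1)\sigma(\max_j|\sigma^{(j)}_{\nu-}|+\sigma)\varepsilon^2$ of $U^{\varepsilon}_-$ or of $U^{\varepsilon}_+$, and the $j$-waves with $j\ne k$, being absent from $U_{w,\nu}^{\varepsilon}(t_0+h,\cdot)$ and occupying a spatial width $O(h)$ inside $Q_h$, contribute to the left side of \eqref{3.5a} at most $\sum_{j\ne k}|\beta_j|\,O(h)=O(1)\sigma(\max_j|\sigma^{(j)}_{\nu-}|+\sigma)\varepsilon^2h$, which is of the desired size.

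Next I would compare the principal $k$-wave with the $k$-front of $U_{w,\nu}^{\varepsilon}$. Both connect $U^{\varepsilon}_-$ to $U^{\varepsilon}_+$ up to the $O(\varepsilon^2)$ endpoint perturbations of the previous step, both are monotone in the genuinely nonlinear sense, and both emanate from $x_0$; it is enough to control their propagation speeds. By Lemma \ref{lem:burger's riemann problem} on one side, and the Rankine--Hugoniot formula (if $\sigma<0$) or the edge-eigenvalue formula (if $\sigma>0$) on the other, together with the Taylor expansion of $\lambda_k$ about $U^0$ and the normalization $\nabla\lambda_k(U^0)\cdot\rr_k^0=1$ from \eqref{ngo3}, these speeds both equal $\lambda_k^0+(\sigma^{(k)}_{\nu-}+\tfrac{\sigma}{2})\varepsilon$ up to $\varepsilon\sum_{j\ne k}\sigma^{(j)}_{\nu-}\big(\nabla\lambda_k(U^0)\cdot\rr_j^0\big)+O\big((\max_j|\sigma^{(j)}_{\nu-}|+\sigma)^2\varepsilon^2\big)=O\big((\max_j|\sigma^{(j)}_{\nu-}|+\sigma)\varepsilon\big)$. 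Consequently, at time $t_0+h$ the set inside $Q_h$ on which the two profiles disagree (outside the spurious waves already accounted for) is an interval near $x_0+\lambda_k^0 h$ of length $O\big((\max_j|\sigma^{(j)}_{\nu-}|+\sigma)\varepsilon h\big)$, and on it the two profiles differ by at most $|U^{\varepsilon}_+-U^{\varepsilon}_-|+O(\varepsilon^2)=O(\sigma\varepsilon)$; multiplying, this part of \eqref{3.5a} is again $O(1)\sigma(\max_j|\sigma^{(j)}_{\nu-}|+\sigma)\varepsilon^2h$. Summing the two contributions and absorbing the higher-order remainders by means of $\varepsilon\le\varepsilon_0$ and the a priori bound on $\sum_j|\sigma_\nu^{(j)}|$ yields \eqref{3.5a}.

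The main obstacle is the speed comparison in the third step: the exact $k$-shock (or rarefaction) speed and the geometric-optics front speed already differ at order $\varepsilon$, the discrepancy $\varepsilon\sum_{j\ne k}\sigma^{(j)}_{\nu-}\big(\nabla\lambda_k(U^0)\cdot\rr_j^0\big)$ being precisely the cross-coupling of $\lambda_k$ to the other Riemann invariants that weakly nonlinear geometric optics discards. The point that makes the estimate work is that this $O(\varepsilon)$ speed error, once multiplied by the $O(\varepsilon)$ strength of the $k$-jump to become an $L^1$ error, still produces only the claimed $O(\varepsilon^2)$; the care required is in the bookkeeping of all the $\varepsilon^2$-and-higher terms — endpoint perturbations from the spurious waves, the quadratic shock-speed correction, the second-order Taylor remainder of $\lambda_k$, and the right-eigenvector variation and discretization inside the rarefaction fan — and in checking that each of them carries a factor $\sigma$ or $(\sigma+\max_j|\sigma^{(j)}_{\nu-}|)$ that lets it be absorbed into $C\sigma(\sigma+\max_j|\sigma_{\nu-}^{(j)}|)\varepsilon^2h$.
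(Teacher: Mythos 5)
Your proposal is correct and follows essentially the same route as the paper's own proof: it invokes Lemma \ref{lem:burger's riemann problem} for the front speed of the scalar profile and Lemma \ref{lem:genius nonlinear riemann problem} for the wave strengths, disposes of the spurious $j$-waves ($j\ne k$) via $|\beta_j|=O(1)\sigma(\sigma+\max_j|\sigma_{\nu-}^{(j)}|)\varepsilon^2$ over a width $O(h)$, and then bounds the mismatch of the principal $k$-wave by the product of the $O(\sigma\varepsilon)$ jump with the $O\big((\sigma+\max_j|\sigma_{\nu-}^{(j)}|)\varepsilon h\big)$ region where the two fronts (or the fan and the front) disagree, exactly as in the paper's split of the integral for both the shock and rarefaction cases. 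The identification of the cross-coupling term $\varepsilon\sum_{j\ne k}\sigma_{\nu-}^{(j)}\big(\nabla\lambda_k(U^0)\cdot\rr_j^0\big)$ as the dominant speed discrepancy matches the paper's computation of $\dot S(\beta_k)-\dot S_{B(t,x)}(\sigma)$.
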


\begin{proof}
First, we consider the case $\sigma<0$. Then, from Lemma \ref{lem:genius nonlinear riemann problem},
we obtain that $\beta_k<0$ when $\varepsilon$ sufficiently small. Thus, the $k$-th wave is a shock.
From Lemma \ref{prop:Riemann systems} and the Rankine-Hugoniot condition, we have
$$
\left.\dot{S}(\beta_k)\right|_{\beta_k=0}=\lambda_k(U_{k-1}),\quad
\left.\frac{\partial\dot{S}(\beta_k)}{\partial\beta_k}\right|_{\beta_k=0}
=\frac{1}{2}(\rr_k\cdot\nabla)\lambda_k(U_{k-1}),
$$
where $\dot{S}(\beta_k)$ is the shock speed of the $k$-th shock with respect to $\beta_k$,
and $U_{k-1}$ is defined as in Lemma \ref{prop:Riemann systems}.
Therefore, if $U_-=U^{\epsilon}_{w,\nu-}$ and $U_+=U^{\epsilon}_{w,\nu+}$, we have
$$
\begin{array}{l}
\dot{S}(\beta_k)=\lambda_k(U_{k-1})+\frac{1}{2}\beta_k+O(1)\beta^2 \\[2mm]
\quad\quad\,\,\,\,\, =\lambda_k(U_-)+\frac{1}{2}\beta_k+O(1)\beta^2
+O(1)\sigma(\sigma+\max_j|\sigma_{\nu_-}^{(j)}|)\,\varepsilon^2,
\end{array}
$$
by using Lemmas \ref{prop:Riemann systems} and \ref{lem:genius nonlinear riemann problem}
and the following estimate:
\begin{equation}\label{estimate for uk-1 and u-}
|U_{k-1}-U_-|\le\sum_{j<k}|U_j-U_{j-1}|\le C \sum_{j<k}|\beta_j|
=C\sigma(\max_j|\sigma_{\nu-}^{(j)}|+\sigma)\,\varepsilon^2.
\end{equation}
Using Lemma \ref{lem:genius nonlinear riemann problem}, we have
\begin{eqnarray*}
\dot{S}(\beta_k)&=&\lambda_k(U_-)+\frac{1}{2}\sigma\varepsilon
+O(1)\sigma\big(\sigma+\max_j|\sigma_{\nu-}^{(j)}|\big)\varepsilon^2\\
&=&\lambda_k(U_-)+\frac{1}{2}\sigma\varepsilon
+O(1)\sigma\big(\sigma+\max_j|\sigma_{\nu-}^{(j)}|\big)\varepsilon^2,
\end{eqnarray*}
Then we have
$$
\begin{array}{l}
\dot{S}(\beta_k)-\dot{S}_{B(t,x)}(\sigma)\\[2mm]
=\lambda_k(U_-)+\frac{1}{2}\sigma\varepsilon
+O(1)\sigma\big(\sigma+\max_j|\sigma_{\nu-}^{(j)}|\big)\varepsilon^2
-\big(\lambda_k^0+\sigma_{\nu-}^{(k)}\varepsilon +\frac{\sigma}{2}\varepsilon\big)\\[2mm]
=\lambda_k(U_-)-\lambda_k^0
-\frac{1}{2}\sigma_{\nu-}^{(k)}\varepsilon  +O(1)\sigma\big(\sigma+\max_j|\sigma_{\nu-}^{(j)}|\big)\varepsilon^2\\[2mm]
=\varepsilon\sum_{j\ne k}\sigma_{\nu-}^{(j)}
+O(1)\sigma\max_j\big(|\sigma_{\nu-}^{(j)}|^2\big)\varepsilon^2
+O(1)\sigma\big(\sigma+\max_j|\sigma_{\nu-}^{(j)}|\big)\varepsilon^2\\[2mm]
=O(1)\max_j|\sigma_{\nu-}^{(j)}|\, \varepsilon.
\end{array}
$$
As shown in Figure \ref{fig:nonlinear shock}, with estimate \eqref{estimate for uk-1 and u-}, and
\begin{equation}\label{estimate for uk and u+}
|U_{k}-U_+|\le\sum_{j\ge k}|U_j-U_{j+1}|\le C\sum_{j>k}|\beta_j|
=C \sigma\big(\max_j|\sigma_{\nu-}^{(j)}|+\sigma\big)\varepsilon^2,
\end{equation}
we have
\begin{eqnarray*}
&&\int_{x_0-\hat{\lambda}h<x<x_0+\hat{\lambda}h}
|S_h(U_{w,\nu}^{\varepsilon}(t_0,\cdot))-U_{w,\nu}^{\varepsilon}(t_0+h,x)|\mbox{d}x\\
&&=\int_{x_0-\hat{\lambda}h<x<x_0+\hat{\lambda}h}
|U(t_0+h,x)-U^0-\sum_{j=0}^n\sigma_{\nu}^{(j)}(\varepsilon(t_0+h),x-\lambda_j^0(t_0+h))\rr_j^0|\mbox{d}x\\
&&=\Big(\int_{x_0-\hat{\lambda}h}^{{\min\{\dot{S}_k,\dot{S}_{B(t,x)}\}h}}
+\int_{\max\{\dot{S}_k,\dot{S}_{B(t,x)}\}h}^{x_0+\hat{\lambda}h}\Big)\times\\
&&\qquad \times
\big|U(t_0+h,x)-U^0-\sum_{j=0}^n\sigma_{\nu}^{(j)}(\varepsilon(t_0+h),x-\lambda_j^0(t_0+h))\rr_j^0\big|\mbox{d}x\qquad\\
&&\quad +\int_{\min\{\dot{S}_k,\dot{S}_{B(t,x)}\}h}^{\max\{\dot{S}_k,\dot{S}_{B(t,x)}\}h}
|U(t_0+h,x)-U^0-\sum_{j=0}^n\sigma_{\nu}^{(j)}(\varepsilon(t_0+h),x-\lambda_j^0(t_0+h))\rr_j^0|\mbox{d}x\\
&&=O(1)\sigma\big(\sigma+\max_j|\sigma_{\nu-}^{(j)}|\big)\hat{\lambda}h \varepsilon^2
+O(1)\sigma \max_j|\sigma_{\nu-}^{(j)}|\,  h \varepsilon^2\,\\
&&=O(1)\sigma\big(\sigma+\max_j|\sigma_{\nu-}^{(j)}|\big)h\varepsilon^2.
\end{eqnarray*}
\begin{figure}[tbh]
\begin{centering}
\includegraphics[width=5.9in, height=3.1in]{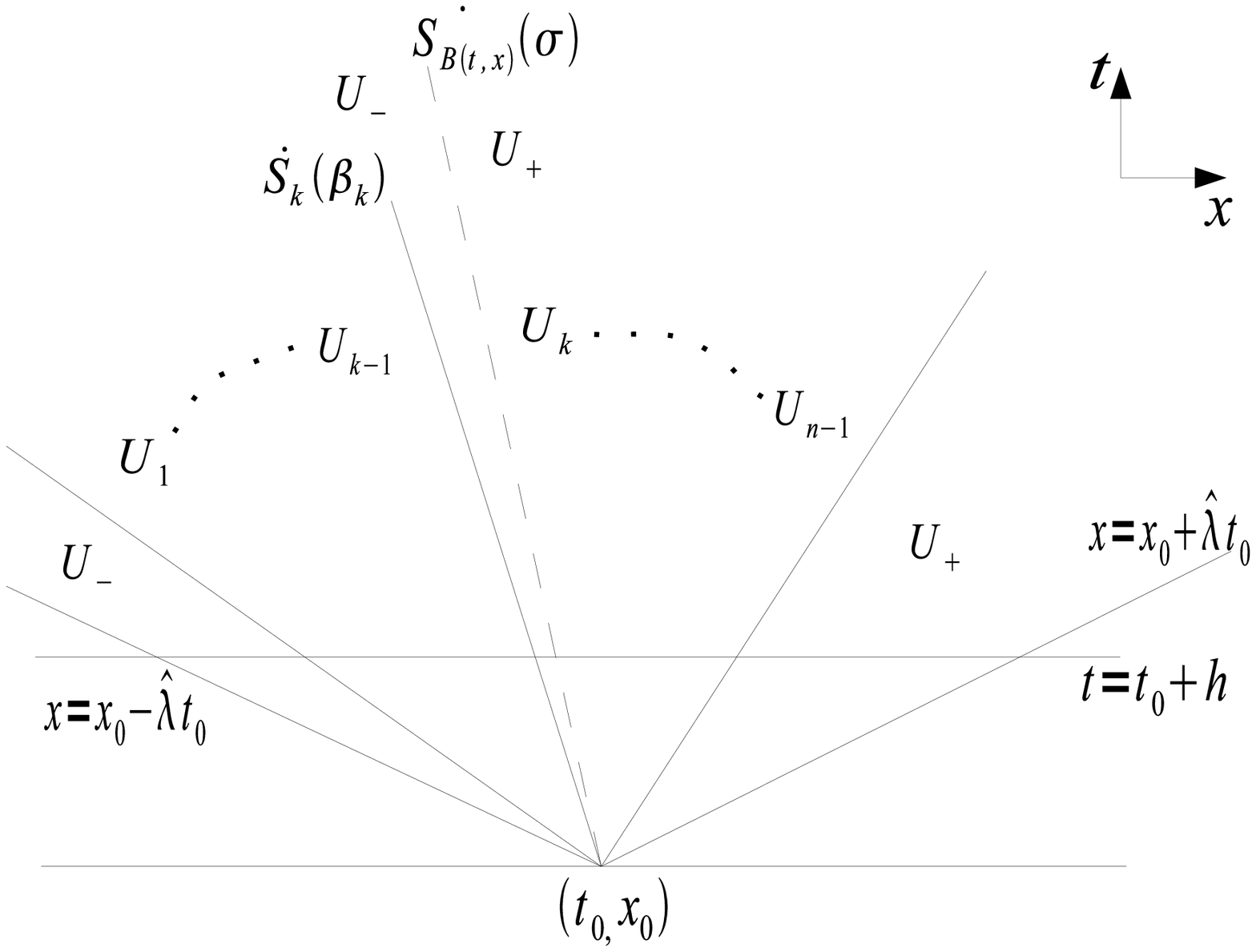}
\caption{Comparison for the genuinely nonlinear case: shock wave}
\label{fig:nonlinear shock}
\end{centering}
\end{figure}

\begin{figure}[tbh]
\begin{centering}
\includegraphics[width=6.2in, height=3.2in]{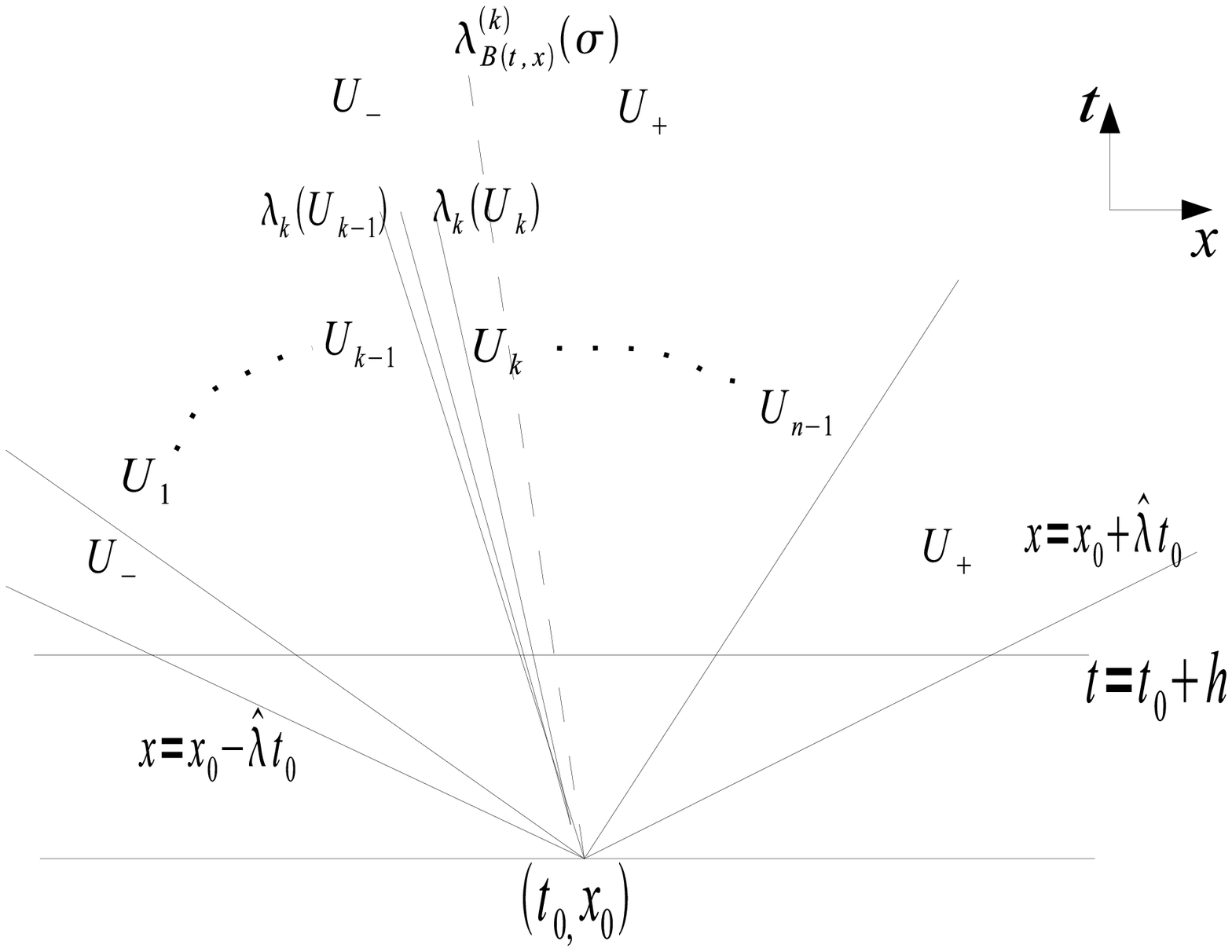}
\caption{Comparison for the genuinely nonlinear case: rarefaction wave}
\label{fig:nonlinear rarefaction}
\end{centering}
\end{figure}

We now consider the case $\sigma>0$.
As for the case $\sigma<0$, we obtain that $\beta_k>0$ when $\varepsilon$ sufficiently small.
Then the $k$-th wave is a rarefaction wave.
From Lemmas \ref{prop:Riemann systems} and \ref{lem:genius nonlinear riemann problem},
we have
\begin{eqnarray}
\lambda_k(U_{k-1})&=&\lambda_{k}(U_-)+O(1)
\sigma\big(\sigma+\max_j|\sigma_{\nu-}^{(j)}|\big)\varepsilon^2,\nonumber\\[2mm]
\lambda_k(U_{k})&=&\lambda_{k}(U_+)+O(1)
\sigma\big(\sigma+\max_j|\sigma_{\nu-}^{(j)}|\big)\varepsilon^2,\nonumber\\[2mm]
\lambda_k(U_{k})&=&\lambda_k(U_{k-1})
+\beta_k
+O(1)\sigma\big(\sigma+\max_j|\sigma_{\nu-}^{(j)}|\big)\varepsilon^2\nonumber\\[2mm]
&=&\lambda_k(U_{-})+\frac{1}{2}\sigma\varepsilon
+O(1)\sigma\big(\sigma+\max_j|\sigma_{\nu-}^{(j)}|\big)\varepsilon^2. \nonumber
\end{eqnarray}
These yield
$$
\lambda_k(U_k)-\lambda_k(U_{k-1})=\frac{1}{2}\sigma\varepsilon
+O(1)\sigma\big(\sigma+\max_j|\sigma_{\nu-}^{(j)}|)\big)\varepsilon^2=O(1)\sigma \varepsilon,
$$
$$
\begin{array}{l}
\lambda_k(U_k)-\lambda^{(k)}_{B(t,x)}(\sigma)\\[2mm]
=\lambda_k(U_-)+\frac{1}{2}\sigma\varepsilon
-\lambda^0_k-\sigma_{\nu-}^{(k)}\varepsilon-\frac{\sigma}{2}\varepsilon
+O(1)\sigma\big(\sigma+\max_j|\sigma_{\nu-}^{(j)}|\big)\varepsilon^2\\[2mm]
=\lambda_k(U^0+\varepsilon\sum_{j=1}^n\sigma_{\nu-}^{(j)}r_j^0)
-\lambda_k^0-\sigma_{\nu-}^{(k)}\varepsilon
+O(1)\sigma\big(\sigma+\max_j|\sigma_{\nu-}^{(j)}|\big)\varepsilon^2\\[2mm]
=O(1)\varepsilon\sum_{j\ne k}(\rr_j\cdot\nabla)\lambda_k^0\sigma_{\nu-}^{(j)}
+O(1)\sigma\varepsilon+O(1)\sigma\big(\sigma+\max_j|\sigma_{\nu-}^{(j)}|\big)\varepsilon^2\\[2mm]
=O(1)\big(\sigma+\max_j|\sigma_{\nu-}^{(j)}|\big)\varepsilon,
\end{array}
$$
and
$$
\begin{array}{l}
\lambda_k(U_{k-1})-\lambda^{(k)}_{B(t,x)}(\sigma)\\[2mm]
=\lambda_k(U_-)-\lambda^0_k-\sigma_{\nu-}^{(k)}\varepsilon
-\frac{\sigma}{2}\varepsilon+O(1)\sigma\big(\sigma+\max_j|\sigma_{\nu-}^{(j)}|\big)\varepsilon^2\\[2mm]
=\lambda_k(U^0+\varepsilon\sum_{j=1}^n\sigma_{\nu-}^{(j)}\rr_j^0)-\lambda_k^0
-\sigma_{\nu-}^{(k)}\varepsilon
-\frac{\sigma}{2}\varepsilon +O(1)\sigma\big(\sigma+\max_j|\sigma_{\nu-}^{(j)}|\big)\varepsilon^2\\[2mm]
=O(1)\big(\sigma+\max_j|\sigma_{\nu-}^{(j)}|\big)\varepsilon,
\end{array}
$$
where we have used the above estimates
and the properties of the centered rarefaction waves:
$$
U(t,x)=
\left\{
\begin{array}{l l}
U_- & \quad \mbox{if} \, \frac{x}{t}<\lambda_k(U_-),\\
U_+ & \quad \mbox{if} \, \frac{x}{t}>\lambda_k(U_-),\\
R_k(\sigma)(U_-) & \quad \mbox{if} \, \frac{x}{t}=\lambda_k(R_k(\sigma)(U_-))\in[\lambda_k(U_-),\lambda_k(U_+)],
\end{array}
\right.
$$
when $U_{\pm}$ are connected by the $k$-rarefaction curve,
and $R_k(\sigma)(U_-)$ is a smooth function for $\sigma$ and $U_-$.

As shown in Figure \ref{fig:nonlinear rarefaction}, similar to the case $\sigma<0$, we have
\begin{eqnarray*}
&&\int_{x_0-\hat{\lambda}h<x<x_0+\hat{\lambda}h}
|S_h(U_{w,\nu}^{\varepsilon}(t_0,\cdot))-U_{w,\nu}^{\varepsilon}(t_0+h,x)|\mbox{d}x\\
&&=\int_{x_0-\hat{\lambda}h<x<x_0+\hat{\lambda}h}
\big|U(t_0+h,x)-U^0-\sum_{j=0}^n\sigma_{\nu}^{(j)}(\varepsilon(t_0+h),x-\lambda_j^0(t_0+h))\rr_j^0\big|\mbox{d}x\\
&&=\int_{x_0-\hat{\lambda}h}^{{\min\{\lambda_k(U_{k-1}),\lambda_{B(t,x)}^{(k)}\}}}
\big|U(t_0+h,x)-U^0-\sum_{j=0}^n\sigma_{\nu}^{(j)}(\varepsilon(t_0+h),x-\lambda_j^0(t_0+h))\rr_j^0\big|\mbox{d}x\\
&&\quad +\int_{{\max\{\lambda_k(U_{k}),\lambda_{B(t,x)}^{(k)}\}}}^{x_0+\hat{\lambda}h}
\big|U(t_0+h,x)-U^0-\sum_{j=0}^n\sigma_{\nu}^{(j)}(\varepsilon(t_0+h),x-\lambda_j^0(t_0+h))\rr_j^0\big|\mbox{d}x\\
&&\quad +\int_{\min\{\lambda_k(U_{k-1}),\lambda_{B(t,x)}^{(k)}\}}^{\max\{\lambda_k(U_{k}),\lambda_{B(t,x)}^{(k)}\}}
|U(t_0+h,x)-U^0-\sum_{j=0}^n\sigma_{\nu}^{(j)}(\varepsilon(t_0+h),x-\lambda_j^0(t_0+h))\rr_j^0|\mbox{d}x\\
&& =O(1)\sigma\big(\sigma+\max_j|\sigma_{\nu-}^{(j)}|\big)\hat{\lambda}h \varepsilon^2
+O(1)\sigma \max_j|\sigma_{\nu-}^{(j)}|h\varepsilon^2\\
&& =O(1)\sigma(\sigma+\max_j|\sigma_{\nu-}^{(j)}|)h\varepsilon^2.
\end{eqnarray*}
This completes the proof.
\end{proof}

\subsection{Comparison for the Linearly Degenerate or Constant
Multiplicity Case}\label{subsec:comparisonlinearlydegenerate}

In this case, we first show that the characteristic fields whose eigenvalue has constant multiplicity, bigger than one,
must be linearly degenerate.

\begin{lem}\label{lem:3.3a}
Assume that $F(U)$ satisfies the assumptions in Theorem {\rm \ref{thm:compacted initial data}}, and
$\lambda_{m+1}(U)\equiv\cdots\equiv\lambda_{m+p}(U)$ for $p>1$.
Then the characteristic fields of $\lambda_{m+1}(U), \cdots, \lambda_{m+p}(U)$ must be linearly degenerate.
\end{lem}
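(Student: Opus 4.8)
The plan is to show directly that $\nabla_U\lambda(U)\cdot\rr_k(U)\equiv 0$ for each $k\in\{m+1,\dots,m+p\}$, which is precisely the assertion that the corresponding characteristic fields are linearly degenerate. Write $\lambda=\lambda_{m+1}=\cdots=\lambda_{m+p}$. I would first record what the constant-multiplicity hypothesis buys us: the spectral (Riesz) projection onto $\mathrm{Ker}(\lambda I-\nabla F)$ depends on $U$ as smoothly as $\nabla F$, hence is $C^1$ since $F\in C^2$; consequently one may select $C^1$ right eigenvector fields $\rr_{m+1},\dots,\rr_{m+p}$ spanning this kernel, together with dual left eigenvector fields $\ll_{m+1},\dots,\ll_{m+p}$ normalized by $\ll_i\cdot\rr_j=\delta_{ij}$, and---this is the point that makes the argument work---\emph{every} $\ll_i$ with $m+1\le i\le m+p$ satisfies $\ll_i(\nabla F-\lambda I)=0$, i.e. it annihilates the common singular operator $\nabla F-\lambda I$ no matter which eigenvector of the block is in play. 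The argument is local in $U$, which is enough since linear degeneracy is a pointwise condition.

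The computational core is to differentiate the eigenvector identity $\nabla F(U)\,\rr_j(U)=\lambda(U)\,\rr_j(U)$ along a second eigendirection. Fix $j,k$ in the block. Applying $(\rr_k\cdot\nabla)$ and writing $\nabla^2_U F(\rr_k,\rr_j)$ for the Hessian tensor contracted against the two vectors, I obtain
$$
(\nabla F-\lambda I)\,(\rr_k\cdot\nabla)\rr_j=\big((\rr_k\cdot\nabla)\lambda\big)\,\rr_j-\nabla^2_U F(\rr_k,\rr_j).
$$
Left-multiplying by $\ll_j$ and using $\ll_j(\nabla F-\lambda I)=0$ together with $\ll_j\cdot\rr_j=1$ annihilates the left-hand side and collapses the first term on the right, leaving the key identity
$$
(\rr_k\cdot\nabla)\lambda=\ll_j\cdot\nabla^2_U F(\rr_k,\rr_j),
$$
valid for every $j$ in the block. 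Next I interchange the roles of $j$ and $k$: differentiating $\nabla F\,\rr_k=\lambda\,\rr_k$ along $\rr_j$ and left-multiplying again by $\ll_j$, now with $j\neq k$ so that $\ll_j\cdot\rr_k=\delta_{jk}=0$, the same annihilation yields $\ll_j\cdot\nabla^2_U F(\rr_j,\rr_k)=0$. Since $\nabla^2_U F$ is symmetric in its two vector slots, $\ll_j\cdot\nabla^2_U F(\rr_k,\rr_j)=\ll_j\cdot\nabla^2_U F(\rr_j,\rr_k)=0$.

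Putting the two identities together gives $(\rr_k\cdot\nabla)\lambda=0$ for the chosen $k$---and here is precisely where $p>1$ enters, since it guarantees the existence of an index $j\neq k$ still belonging to $\{m+1,\dots,m+p\}$. Because $k$ was arbitrary in the block, $\nabla_U\lambda\cdot\rr_k\equiv 0$ for all $k=m+1,\dots,m+p$, so every one of these characteristic fields is linearly degenerate. I expect the only genuinely delicate point to be the selection/regularity step at the start: establishing that constant multiplicity makes the eigenprojection (and hence a choice of $\rr_j,\ll_j$) $C^1$ on the relevant region, and that all the $\ll_i$ in the block are bona fide left eigenvectors for the \emph{single} eigenvalue $\lambda$; once this is secured, the two differentiation identities and the symmetry of $\nabla^2_U F$ close the argument immediately.
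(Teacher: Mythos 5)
Your argument is correct and is essentially the paper's own proof: both differentiate the eigenvector identity along a second eigendirection within the multiplicity-$p$ block, pair with the dual left eigenvectors using $\ll_j\cdot\rr_k=\delta_{jk}$ and the equality $\lambda_j=\lambda_k$ to kill the $\nabla F$ terms, and combine the two resulting identities via the symmetry of $\nabla^2_UF$ to conclude $\rr_k\cdot\nabla\lambda\equiv 0$ (with $p>1$ supplying the needed index $j\neq k$). Your explicit remark on the $C^1$ regularity of the eigenprojection under constant multiplicity is a small addition the paper leaves implicit, but the substance of the argument is the same.
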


\begin{proof}
Assume that $\lambda_{m+1}(U)\equiv\cdots\equiv\lambda_{m+p}(U)=\lambda(U)$,
and the corresponding eigenvectors $\rr_{m+1}(U),\cdots, \rr_{m+p}(U)$ are linearly independent.
Performing $\rr_j\cdot\nabla$ to both sides of the equation $\nabla F(U)\, \rr_k(U)=\lambda_k(U) \rr_k(U)$, we have
\begin{equation}\label{3.6a}
\nabla^2F(\rr_k(U),\rr_j(U))+\nabla F(\rr_j(U)\cdot\nabla)\rr_k(U)
=(\rr_j(U)\cdot\nabla)\lambda_k(U)\, \rr_k(U)+\lambda_k(U)(\rr_j(U)\cdot\nabla)\rr_k(U),
\end{equation}
where $j,k\in\{m+1,\cdots, m+p\}$ and $j\ne k$.

Taking the dot product on both sides of \eqref{3.6a}
with $\ll_j$ from the left and using the fact that $\ll_j\cdot \rr_k=\delta_{jk}$, we have
$$
\ll_j(U)\cdot\nabla^2F(\rr_k(U),\rr_j(U))+\lambda_j(U) \, \ll_j(U)\cdot(\rr_j(U)\cdot\nabla)\rr_k(U)
=\lambda_k(U)\,  \ll_j(U)\cdot(\rr_j(U)\cdot\nabla)\rr_k(U),
$$
which implies
$$
\ll_j(U)\cdot\nabla^2F(\rr_k(U),\rr_j(U))\equiv 0 \qquad \mbox{if}\, j\ne k.
$$
Next, taking the dot product on both sides of \eqref{3.6a} with $\ll_k$ from the left and
using the fact that $\ll_j\cdot \rr_k=\delta_{jk}$, we have
\begin{eqnarray*}
&&\ll_k(U)\cdot\nabla^2F(\rr_k(U),\rr_j(U))+\lambda_k(U) \ll_k(U)\cdot(\rr_j(U)\cdot\nabla)\rr_k(U)\\[2mm]
&&=\lambda_k(U) \ll_k(U)\cdot(\rr_j(U)\cdot\nabla)\rr_k(U)+(\rr_k(U)\cdot\nabla)\lambda_k(U),
\end{eqnarray*}
which implies
$$
\rr_k(U)\cdot\nabla\lambda_k(U) =\ll_k(U)\cdot\nabla^2F(\rr_k(U),\rr_j(U))\equiv 0.
$$
This completes the proof.
\end{proof}

Now we consider the $k$th-characteristic field whose eigenvalue has constant multiplicity
for $m+1\le k\le m+p$
in \eqref{1.2}.
In this case, the corresponding scalar case is quite simple, and the equations are reduced to be
$\partial_\tau \sigma^{(k)}_{\nu}\equiv 0$.
Thus, $\sigma^{(k)}_{\nu}(\tau,y)$ is a piecewise function and
$$
\sigma^{(k)}_{\nu}(\tau,y)=
\left\{
      \begin{array}{l}
            \sigma^{(k)}_{\nu+} \qquad \mbox{if}~ y>y_0,\\[2mm]
            \sigma^{(k)}_{\nu-} \qquad \mbox{if}~y\le y_0.
      \end{array}
\right.
$$
Then the slope of the discontinuity line in the $(t, x)$--coordinates is
\begin{equation}\label{slope of scalar contact disconuity}
    \lambda^{(k)}_{B(t,x)}(\sigma)
    =\frac{x-x_0}{t-t_0}
    =\frac{y+(\lambda_k^0\tau)/\varepsilon-y_0-(\lambda_k^0\tau_0)/\varepsilon}{\tau/\varepsilon-\tau_0/\varepsilon}
    =\varepsilon\frac{y-y_0}{\tau-\tau_0}+\lambda_k^0
    =\lambda_k^0.
\end{equation}

Next, we consider the Riemann problem to system \eqref{ngo1}  with the corresponding initial
data \eqref{systems corresponding initial data} as before. We obtain

\begin{lem}\label{lem:linearly degerate}
Assume that $F(U)$ satisfies all the assumptions in Theorem {\rm \ref{thm:compacted initial data}}, $m+1\le k\le m+p$ as in \eqref{1.2},
$\sigma_{\nu}^{(k)}$ has a jump at $(\tau_0,y_0)$,
and $\sigma_{\nu+}^{(k)}=\sigma_{\nu-}^{(k)}+\sigma$.
Let $(t_0,x_0)$ be the corresponding point to $(\tau_0,y_0)$,
$U^{\varepsilon}_{\pm}=U^0+\varepsilon\sum_{j=1}^n\sigma_{\nu_{\pm}}^{(j)}\rr_j^0$,
and $U^{\varepsilon}_+=\Phi(U_-^{\varepsilon};\beta_1,\cdots,\beta_n)$.
Then we have
$$
\beta_j(\sigma,\varepsilon)=\delta_{jk}\sigma \varepsilon +O(1)\sigma\big(\max_j|\sigma_{\nu-}^{(j)}|+\sigma\big)\varepsilon^2.
$$
\end{lem}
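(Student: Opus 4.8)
The plan is to follow the proof of Lemma~\ref{lem:genius nonlinear riemann problem} almost verbatim; the only genuinely new point is to verify that, across the constant-multiplicity block, the composite wave map $\Phi$ still has $\rr_j(U^\varepsilon_-)$ as its first-order term in each direction $\beta_j$, which is exactly what the normalizations in \eqref{laxtypemapprop} are designed to give. Since only $\sigma^{(k)}_\nu$ jumps at $(\tau_0,y_0)$, we have $\sigma^{(j)}_{\nu+}=\sigma^{(j)}_{\nu-}$ for $j\ne k$, hence $U^\varepsilon_+=U^\varepsilon_-+\varepsilon\sigma\,\rr_k^0$. Putting $\theta=\varepsilon\sigma$, this reads $\Phi(U^\varepsilon_-;\beta_1,\dots,\beta_n)-U^\varepsilon_-=\theta\,\rr_k^0$. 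By Lemma~\ref{prop:Riemann systems} the map $\Phi$ is smooth in $\beta$ and $\Phi(U^\varepsilon_-;0)=U^\varepsilon_-$, so uniqueness of the parameter vector forces $\beta_j|_{\theta=0}=0$ for all $j$.

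Next I would differentiate the identity $\Phi(U^\varepsilon_-;\beta(\theta))=U^\varepsilon_-+\theta\,\rr_k^0$ in $\theta$ and evaluate at $\theta=0$, obtaining $\sum_j\frac{\partial\Phi}{\partial\beta_j}\big|_{\beta=0}\,\frac{\partial\beta_j}{\partial\theta}\big|_{\theta=0}=\rr_k^0$. This is where the structure \eqref{laxtypemapdf1}--\eqref{laxtypemapprop} enters: differentiating the composition defining $\Phi$ at $\beta=0$, where every intermediate state $U_0,\dots,U_n$ collapses to $U^\varepsilon_-$ and every intermediate Jacobian in the state variable is the identity, and using $\frac{\partial\ppsi_i}{\partial\beta_i}(U^\varepsilon_-;0)=\rr_i(U^\varepsilon_-)$ for $i\notin\{m+1,\dots,m+p\}$ together with $\frac{\partial\ppsi}{\partial\beta_j}(U^\varepsilon_-;0,\dots,0)=\rr_j(U^\varepsilon_-)$ for $m+1\le j\le m+p$, one gets $\frac{\partial\Phi}{\partial\beta_j}\big|_{\beta=0}=\rr_j(U^\varepsilon_-)$ for every $j$. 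Expanding $\rr_k^0=\rr_k\big(U^\varepsilon_--\varepsilon\sum_j\sigma^{(j)}_{\nu-}\rr_j^0\big)=\rr_k(U^\varepsilon_-)+O(1)\max_j|\sigma^{(j)}_{\nu-}|\,\varepsilon$ and taking the dot product of the resulting identity with $\ll_j(U^\varepsilon_-)$ (using $\ll_j\cdot\rr_k=\delta_{jk}$) yields $\frac{\partial\beta_j}{\partial\theta}\big|_{\theta=0}=\delta_{jk}+O(1)\max_j|\sigma^{(j)}_{\nu-}|\,\varepsilon$. A Taylor expansion of $\beta_j$ in $\theta$, legitimate by the smoothness of $\Phi$ in $\beta$ from Lemma~\ref{prop:Riemann systems}, then gives $\beta_j(\theta,\varepsilon)=\delta_{jk}\theta+O(1)\max_j|\sigma^{(j)}_{\nu-}|\theta\varepsilon+O(1)\theta^2$, and substituting $\theta=\varepsilon\sigma$ produces the claimed estimate $\beta_j(\sigma,\varepsilon)=\delta_{jk}\sigma\varepsilon+O(1)\sigma\big(\max_j|\sigma^{(j)}_{\nu-}|+\sigma\big)\varepsilon^2$.

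I expect the one real obstacle to be the verification in the preceding paragraph that $\frac{\partial\Phi}{\partial\beta_j}\big|_{\beta=0}=\rr_j(U^\varepsilon_-)$ when $j$ lies inside the degenerate block $\{m+1,\dots,m+p\}$: unlike the strictly hyperbolic case, $\Phi$ is built by inserting the $p$-dimensional block map $\ppsi(U_m;\beta_{m+1},\dots,\beta_{m+p})$ of \eqref{laxtypemapdf2} between the scalar maps $\ppsi_i$, so one must carry the chain rule through this composite and invoke the precise parametrization in \eqref{laxtypemapprop}, the point being that the block is arranged so that, to first order, it behaves exactly like $p$ independent waves with directions $\rr_{m+1},\dots,\rr_{m+p}$. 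Once this is settled the remaining computations are identical, word for word, to those in Lemma~\ref{lem:genius nonlinear riemann problem}; and, in contrast to the genuinely nonlinear case, no shock-versus-rarefaction case distinction is needed, since the $k$-th field is linearly degenerate (by Lemma~\ref{lem:3.3a} when $p>1$) so the corresponding wave is a contact discontinuity, admissible for either sign of $\sigma$.
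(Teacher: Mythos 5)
Your proposal is correct and follows essentially the same route as the paper: set $\theta=\varepsilon\sigma$, note $\beta|_{\theta=0}=0$, differentiate $\Phi(U^\varepsilon_-;\beta(\theta))=U^\varepsilon_-+\theta\,\rr_k^0$ at $\theta=0$, use $\frac{\partial\Phi}{\partial\beta_j}\big|_{\beta=0}=\rr_j(U^\varepsilon_-)$ from Lemma \ref{prop:Riemann systems}, expand $\rr_k^0$ about $U^\varepsilon_-$, project with $\ll_j(U^\varepsilon_-)$, and Taylor expand in $\theta$. Your extra paragraph carrying the chain rule through the $p$-dimensional block map $\ppsi$ of \eqref{laxtypemapdf2} only makes explicit what the paper leaves to the normalizations in \eqref{laxtypemapprop}, so the two arguments coincide.
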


\begin{proof}
Let $\theta=\varepsilon\sigma$. From the definition, $U^{\varepsilon}_+=U^{\varepsilon}_-+\theta \rr_j^0$, that is,
\begin{equation}\label{3.7a}
\Phi(U_-^{\varepsilon};\beta_1,\cdots,\beta_n)-U_-=\theta \rr_j^0.
\end{equation}
If $\theta=0$, we deduce from $\Phi(U_-^{\varepsilon};\beta_1,\cdots,\beta_n)=U_-$ that $\beta_j|_{\theta=0}=0$.
We differentiate both sides of \eqref{3.7a} with respect to $\theta$ to obtain
$\sum_j\frac{\partial\Phi}{\partial\beta_j}\frac{\partial\beta_j}{\partial\theta}=\rr^0_k$.
From Lemma \ref{prop:Riemann systems}, we know
$\left.\frac{\partial\Phi}{\partial\beta_j}\right|_{\theta=0}
=\left.\frac{\partial\Phi}{\partial\beta_j}\right|_{\beta=0}=\rr_j(U_-^{\varepsilon})$.
Thus, we have
\begin{eqnarray}
&&\sum_j\frac{\partial\Phi}{\partial\beta_j}\Big|_{\theta=0}
 \frac{\partial\beta_j}{\partial\theta}\Big|_{\theta=0}\nonumber \\
&&=\rr_k^0 \nonumber\\
&&=\rr_k(U^{\varepsilon}_- -\varepsilon\sum_{j=1}^n\sigma_{\nu-}^{(j)}\rr_j^0)\nonumber \\
&&=\rr_k(U_-^{\varepsilon})
-\varepsilon\sum_{j=1}^n(\rr_j^0\cdot\nabla)\rr_k(U_-^{\varepsilon})\sigma_{\nu-}^{(j)}+O(1)\varepsilon^2 \nonumber
\\
&&=\rr_k(U_-^{\varepsilon})+O(1)\max_j|\sigma_{\nu-}^{(j)}|\,\varepsilon. \label{3.8a}
\end{eqnarray}
Then we take the dot product on both sides of \eqref{3.8a} with $\ll_j(U_-^{\varepsilon})$ to obtain
$$
\left.\frac{\partial\beta_j}{\partial\theta}\right|_{\theta=0}
=\delta_{jk}+O(1)\max_j|\sigma_{\nu-}^{(j)}|\varepsilon,
$$
and then
\begin{eqnarray*}
\beta_j(\theta,\;\varepsilon)=\delta_{jk}\theta
+O(1)\max_j|\sigma_{\nu-}^{(j)}|\theta\varepsilon+O(1)\theta^2\\
=\delta_{jk}\sigma\varepsilon +O(1)\sigma\big(\max_j|\sigma_{\nu-}^{(j)}|+\sigma\big)\varepsilon^2.
\end{eqnarray*}
This completes the proof.
\end{proof}

With Lemmas \ref{lem:3.3a}--\ref{lem:linearly degerate},
we have

\begin{prop}\label{lem:local L1 estimates for linearly degenerate}
Assume that $F(U)$ satisfies the assumptions in Theorem {\rm \ref{thm:compacted initial data}},
either the $k$-th characteristic field is
linearly degenerate or its eigenvalue has constant multiplicity,
$\sigma_{\nu}^{(k)}$ is a piecewise constant function which satisfies
$\sigma_{\nu+}^{(k)}=\sigma_{\nu-}^{(k)}+\sigma$
and has a jump point at $(\tau_0,y_0)$, $(t_0,x_0)$ is the corresponding point,
and $(\tau_0,y_0)$ is not a jump point of $\sigma_{\nu}^{(j)}$ for $j\ne k$.
Then, for every $\hat{\lambda}>2\max_{|u|\le M}\lambda_k(u)$ with
$M$ being the maximum of the solution to \eqref{ngo1},
if $h$ is sufficiently small, we have
$$
\int_{x_0-\hat{\lambda}h<x<x_0+\hat{\lambda}h}
\big|S_h(U_{w,\nu}^{\varepsilon}(t_0,\cdot))-U_{w,\nu}^{\varepsilon}(t_0+h,\cdot)\big|
\mbox{d}x
\le C\sigma\big(\sigma+\max_j|\sigma_{\nu-}^{(j)}|\big)h\varepsilon^2,
$$
where $U_{w,\nu}^{\varepsilon}$ is defined in Lemma {\rm \ref{lem:local L1 estimates for genius nonlinear}},
and $C>0$ is a constant independent of $h$ and $\varepsilon$.
\end{prop}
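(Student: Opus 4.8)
The plan is to mirror the proof of Proposition~\ref{lem:local L1 estimates for genius nonlinear}, replacing the genuinely nonlinear $k$-wave by a contact discontinuity; this in fact simplifies matters, since no intermediate rarefaction profile has to be resolved.

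First I would set up the local Riemann problem at $(t_0,x_0)$. Since $(\tau_0,y_0)$ is not a jump point of $\sigma_\nu^{(j)}$ for $j\ne k$, near $x_0$ the datum $U_{w,\nu}^{\varepsilon}(t_0,\cdot)$ reduces to the single jump from $U_-^{\varepsilon}=U^0+\varepsilon\sum_j\sigma_{\nu-}^{(j)}\rr_j^0$ to $U_+^{\varepsilon}=U_-^{\varepsilon}+\varepsilon\sigma\,\rr_k^0$, so by finite speed of propagation and the consistency property of the SRS, $S_h(U_{w,\nu}^{\varepsilon}(t_0,\cdot))$ agrees on $\{|x-x_0|<\hat\lambda h\}$ with the self-similar solution of this Riemann problem. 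By Lemma~\ref{lem:linearly degerate} one has $\beta_k=\sigma\varepsilon+O(1)\sigma(\max_j|\sigma_{\nu-}^{(j)}|+\sigma)\varepsilon^2$ and $\beta_j=O(1)\sigma(\max_j|\sigma_{\nu-}^{(j)}|+\sigma)\varepsilon^2$ for $j\ne k$, so the $k$-wave is a contact discontinuity --- the $k$-th field being linearly degenerate by hypothesis, or by Lemma~\ref{lem:3.3a} in the constant-multiplicity case --- while the side waves are $O(\varepsilon^2)$; in particular $|U_{k-1}-U_-|+|U_k-U_+|\le C\sigma(\max_j|\sigma_{\nu-}^{(j)}|+\sigma)\varepsilon^2$, exactly as in \eqref{estimate for uk-1 and u-}--\eqref{estimate for uk and u+}.

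Next I would compare the speed $\dot{S}(\beta_k)=\lambda_k(U_{k-1})$ of the contact with the speed $\lambda_{B(t,x)}^{(k)}(\sigma)=\lambda_k^0$ of the corresponding discontinuity line of $\sigma_\nu^{(k)}$ in the $(t,x)$-plane, given by \eqref{slope of scalar contact disconuity}. The key input is the linear degeneracy relation $(\rr_k^0\cdot\nabla)\lambda_k^0=0$, which kills the $j=k$ term in the first-order Taylor expansion of $\lambda_k(U_-^{\varepsilon})$ about $U^0$; together with the previous step this gives
\[
\dot{S}(\beta_k)-\lambda_{B(t,x)}^{(k)}(\sigma)=\lambda_k(U_-^{\varepsilon})-\lambda_k^0+O(1)\sigma(\sigma+\max_j|\sigma_{\nu-}^{(j)}|)\varepsilon^2=O(1)\max_j|\sigma_{\nu-}^{(j)}|\,\varepsilon .
\]
Then, exactly as in Proposition~\ref{lem:local L1 estimates for genius nonlinear}, I would split $\{x_0-\hat\lambda h<x<x_0+\hat\lambda h\}$ (at time $t_0+h$) into the part left of $x_0+\min\{\dot{S}(\beta_k),\lambda_k^0\}h$, the part right of $x_0+\max\{\dot{S}(\beta_k),\lambda_k^0\}h$, and the thin strip in between. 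On each outer part both $S_h(U_{w,\nu}^{\varepsilon}(t_0,\cdot))$ and $U_{w,\nu}^{\varepsilon}(t_0+h,\cdot)$ differ from one fixed constant state by $O(1)\sigma(\sigma+\max_j|\sigma_{\nu-}^{(j)}|)\varepsilon^2$ (the total strength of the $j\ne k$ waves, resp. the error between $U^0+\varepsilon\sum_j\sigma_{\nu\pm}^{(j)}\rr_j^0$ and the true end states), which integrates to $O(1)\sigma(\sigma+\max_j|\sigma_{\nu-}^{(j)}|)h\varepsilon^2$; the strip has width $O(1)\max_j|\sigma_{\nu-}^{(j)}|\varepsilon h$ by the speed estimate, and the integrand there is at most the jump height $O(1)\sigma\varepsilon$, contributing $O(1)\sigma\max_j|\sigma_{\nu-}^{(j)}|h\varepsilon^2$. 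Summing the three pieces yields the claimed bound.

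The step I expect to need the most care is the constant-multiplicity case $m+1\le k\le m+p$: there the Riemann solver of Lemma~\ref{prop:Riemann systems} bundles $\beta_{m+1},\dots,\beta_{m+p}$ into a single $p$-parameter contact \eqref{laxtypemapdf2}, whose components all propagate at essentially the common speed $\lambda(U_m)$. I would dispose of this by noting that Lemma~\ref{lem:linearly degerate} yields $\beta_j=\delta_{jk}\sigma\varepsilon+O(\varepsilon^2)$ for \emph{all} $j$, in particular for the other indices in $\{m+1,\dots,m+p\}$, so that this combined contact is, up to $O(\varepsilon^2)$, a single jump of strength $\sigma\varepsilon$ in the direction $\rr_k^0$ moving at speed $\lambda_k^0+O(\varepsilon)$; the three-region splitting above then applies verbatim, and no obstacle beyond those already met in Proposition~\ref{lem:local L1 estimates for genius nonlinear} arises --- if anything, the absence of a rarefaction fan makes the present case strictly easier.
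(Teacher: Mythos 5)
Your proposal is correct and follows essentially the same route as the paper: the paper's own proof simply notes that the argument of Proposition~\ref{lem:local L1 estimates for genius nonlinear} (shock case) carries over once one replaces the shock speed by $\dot{S}(\beta_k)=\lambda_k(U_{k-1})=\lambda_k^0+O(1)\max_j|\sigma_{\nu-}^{(j)}|\varepsilon$ and compares with $\lambda^{(k)}_{B(t,x)}=\lambda_k^0$, which is exactly your three-region splitting with the same speed estimate. Your additional remarks (that linear degeneracy kills the $j=k$ term in the expansion of $\lambda_k(U_-^{\varepsilon})$, and that the bundled $p$-parameter contact of \eqref{laxtypemapdf2} reduces to a single jump up to $O(\varepsilon^2)$ via Lemma~\ref{lem:linearly degerate}) are consistent with, and somewhat more explicit than, the paper's terse proof.
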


\begin{proof}
The proof of this lemma is similar to the one for Lemma \ref{lem:local L1 estimates for genius nonlinear},
especially for the shock case.
The only change is about the shock speed $\dot{S}(\beta_k)$. This can be calculated directly as follows:
$$
\dot{S}(\beta_k)=\lambda_k(U_{k-1})
=\lambda_k(U_-)+O(1)\sigma\big(\sigma+\max_j|\sigma_{\nu_-}^{(j)}|\big)\varepsilon^2
=\lambda_k^0+O(1)\max_j|\sigma_{\nu-}^{(j)}|\,\varepsilon,
$$
and then
\begin{eqnarray*}
\dot{S}(\beta_k)-\dot{S}_{B(t,x)}(\sigma)
&=&\lambda_k^0+O(1)\max_j|\sigma_{\nu-}^{(j)}|\,\varepsilon-\lambda_k^0\\
&=&O(1)\max_j|\sigma_{\nu-}^{(j)}|\,\varepsilon.
\end{eqnarray*}
With this, we can obtain the desired estimate.
\end{proof}

\section{Proof of Theorem \ref{thm:compacted initial data}, Part I:  $0\leq t\leq T_0$}\label{sec:main thm}

We now prove Theorem \ref{thm:compacted initial data} when $t\in [0, T_0]$, where
$T_0$ is defined as follows:
Since $\mbox{supp}(\sigma^{(j)}\big|_{\tau=0})$ is
a compact set for each $j$ (due to the compactness of the support of $U^1$)
and the speed $\lambda_k^0\neq\lambda_j^0$ for $k\neq j$,
there exists $T_0>0$, independent of $\varepsilon\in(0,\varepsilon_0]$, such that,
for $t\geq T_0$, the compact sets
$$
K_j(t)=\{x\in \mathrm{R}\, :\, \sigma^{j}(\varepsilon t, x-\lambda_j^0t)\neq0\}, \quad j=1, 2, \cdots, n,
$$
are disjoint, that is, for $t\geq T_0$,
$$
K_j(t)\cap K_k(t)=\phi\qquad\mbox{if  $j\neq k$.}
$$
In order to establish Theorem \ref{thm:compacted initial data} for $0\leq t\leq T_0$,
a technical lemma is stated here to restrict the integral interval for the
semigroup estimates to be finite.

\begin{lem}\label{lem:local semigroup estimate}
Let $S:\;\mathcal{D}\times[0,\infty)\mapsto\mathcal{D}$ be the semigroup generated by
system \eqref{ngo1}, and let $\hat{\lambda}$ be an upper bound for all the wave speeds,
where $F(U)$ satisfies all the assumptions in Theorem {\rm \ref{thm:compacted initial data}}.
Given any interval $I_0:= [a,b]$, define
$$
I_t:= [a+\hat{\lambda}t,b-\hat{\lambda}t],\quad t<\frac{b-a}{2\hat{\lambda}}.
$$
Then, for every Lipschitz continuous map $W:\;[0,T]\mapsto\mathcal{D}$,
\begin{equation}
\|W(t)-S_tW(0)\|_{L^1(I_t)}
\le L\,\int_0^t\left\{\liminf_{h\rightarrow 0+}\frac{\|W(\tau+h)-S_hW(\tau)\|_{L^1(I_{\tau+h})}}{h}\right\}
\mbox{d}\tau.
\end{equation}
\end{lem}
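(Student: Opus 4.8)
The plan is to adapt the standard semigroup error estimate \eqref{semigroup estimate} so that it localizes to the shrinking interval $I_t$, exploiting finite propagation speed of the semigroup $S$. The key point is that, because $\hat\lambda$ bounds all wave speeds, the value of $S_t\bar U$ on $I_t=[a+\hat\lambda t,\,b-\hat\lambda t]$ depends only on the restriction of $\bar U$ to $I_0=[a,b]$; in other words, for any two data $\bar U,\bar V$ that agree on $I_0$, one has $S_t\bar U=S_t\bar V$ on $I_t$. Combined with the $L^1$-Lipschitz property \eqref{lip continuity}, this upgrades to the quantitative statement
$$
\|S_t\bar U-S_t\bar V\|_{L^1(I_t)}\le L\,\|\bar U-\bar V\|_{L^1(I_0)}.
$$
This is the localized contraction estimate that will replace the global one in the proof of the standard error formula.

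The next step is to run the telescoping argument of Bressan. Fix $t\in[0,T]$ with $t<(b-a)/(2\hat\lambda)$, partition $[0,t]$ into $0=\tau_0<\tau_1<\cdots<\tau_N=t$, and write the difference as a telescoping sum:
$$
W(t)-S_tW(0)=\sum_{i=1}^N\Big(S_{t-\tau_i}W(\tau_i)-S_{t-\tau_{i-1}}W(\tau_{i-1})\Big)
=\sum_{i=1}^N S_{t-\tau_i}\Big(W(\tau_i)-S_{\tau_i-\tau_{i-1}}W(\tau_{i-1})\Big),
$$
using the semigroup property \eqref{seimgroup property}. Apply the localized Lipschitz estimate to each term: since the interval $I_{\tau_i}$ is obtained from $I_{\tau_{i-1}}$ by further shrinking by $\hat\lambda(\tau_i-\tau_{i-1})$ on each side, and $S_{t-\tau_i}$ propagates at speed at most $\hat\lambda$, one gets
$$
\|S_{t-\tau_i}\big(W(\tau_i)-S_{\tau_i-\tau_{i-1}}W(\tau_{i-1})\big)\|_{L^1(I_t)}
\le L\,\|W(\tau_i)-S_{\tau_i-\tau_{i-1}}W(\tau_{i-1})\|_{L^1(I_{\tau_i})}.
$$
Summing and taking the limit as the mesh $\max_i(\tau_i-\tau_{i-1})\to 0$, the Riemann sum
$$
\sum_{i=1}^N\frac{\|W(\tau_i)-S_{\tau_i-\tau_{i-1}}W(\tau_{i-1})\|_{L^1(I_{\tau_i})}}{\tau_i-\tau_{i-1}}\,(\tau_i-\tau_{i-1})
$$
converges (after passing to $\liminf$ inside, via Fatou) to the integral on the right-hand side, yielding the claimed bound.

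The main technical obstacle is the careful bookkeeping of the nested intervals: one must verify that $I_{\tau_i}\subset I_{\tau_{i-1}}$ shifted appropriately so that the finite-speed argument genuinely applies at each step, i.e. that $W(\tau_i)$ and $S_{\tau_i-\tau_{i-1}}W(\tau_{i-1})$ need only be compared on $I_{\tau_i}$ to control their images under $S_{t-\tau_i}$ on $I_t$. This requires the elementary inclusion that, for $0\le s\le \tau$, the domain of dependence at time $\tau$ of the set $I_t$ under backward evolution of duration $t-\tau$ is contained in $I_\tau$, which follows directly from $\hat\lambda(t-\tau)\le \hat\lambda(t-\tau)$ and the definition of $I_\tau$. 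The remaining ingredients — Lipschitz continuity of $\tau\mapsto W(\tau)$ in $L^1$, measurability of the integrand, and the passage from Riemann sums to the integral with $\liminf$ — are routine and mirror the proof of \eqref{semigroup estimate} in \cite{b-book2000}.
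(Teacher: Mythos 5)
Your proposal is correct and follows essentially the same route as the paper: both first establish the localized Lipschitz estimate $\|S_t\bar U-S_t\bar V\|_{L^1(I_t)}\le L\|\bar U-\bar V\|_{L^1(I_0)}$ from finite propagation speed (the paper additionally notes the needed fact that $\bar U\chi_{[a,b]}\in\mathcal{D}$, which justifies applying the semigroup to the truncated comparison data), and then rerun the standard error-formula argument of Bressan's Theorem 2.9 with this localized estimate in place of the global one. The paper simply cites Bressan for the second step rather than writing out the telescoping sum, so your sketch is, if anything, slightly more explicit.
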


\begin{proof}
First, for every $\bar{U},\bar{V}\in\mathcal{D}$ and $t\ge 0$, we have
$$
\int_{a+\hat{\lambda}t}^{b-\hat{\lambda}t}|S_t\bar{U}(x)-S_t\bar{V}(x)|\mbox{d}x
\le L\int_a^b|\bar{U}(x)-\bar{V}(x)|\mbox{d}x.
$$
This can be obtained by using the following facts:

(i) If two initial conditions $\bar{U},\bar{V}\in\mathcal{D}$
coincide on $(a,b)$, then $S_t\bar{U}(x)=S_t\bar{V}(x)$ for all $x\in (a+\hat{\lambda}t,b-\hat{\lambda}t)$;

(ii) If $\bar{U}\in\mathcal{D}$, then $\bar{U}\,\chi_{[a,b]}\in\mathcal{D}$.

\medskip
With this estimate, the other steps for the proof is the same as the ones for \eqref{semigroup estimate}
in \cite{b-book2000} (Theorem 2.9).
\end{proof}

Now we prove Theorem \ref{thm:compacted initial data} for $0\leq t\leq T_0$.

\begin{proof}[$\mathbf{Proof~of~Theorem~\ref{thm:compacted initial data}~for~ 0\leq t\leq T_0}$]
We divide the proof into three steps. The constant $C>0$ below is a universal bound independent of
the parameters $(j, \nu, \varepsilon, \tau, t, J)$ in the proof.

\medskip
{\it $Step~1. ~Approximation~by~piecewise~constant~functions.$}
Since $U^1\in BV$, we can construct piecewise constant
functions $\sigma^{(j)}_{\nu}(0,y)$, which satisfy
$$
\begin{array}{l}
TV(\sigma^{(j)}_{\nu}(0,\cdot))\le TV(\sigma^{(j)}(0,\cdot)),\\[2mm]
\|\sigma^{(j)}_{\nu}(0,\cdot)-\sigma^{(j)}(0,\cdot)\|_{\infty}
+\|\sigma^{(j)}_{\nu}(0,\cdot)-\sigma^{(j)}(0,\cdot)\|_{L^1}\le\varepsilon,
\end{array}
$$
where $\sigma^{(j)}(0,y)=\ll_j(U^0)\cdot U^1(y)$.
Then
$$
TV(\sigma^{(j)}_{\nu}(0,\cdot))\le TV(\sigma^{(j)}(0,\cdot))\le C\,TV(U^1)
$$
and
$$
\|\sigma^{(j)}_{\nu}(0,\cdot)\|_{L^\infty}\le 2\|\sigma^{(j)}(0,\cdot)\|_{L^\infty}\le C\|U^1(\cdot)\|_{L^\infty}.
$$

In addition, the jump points of each $\sigma^{(j)}_{\nu}$ are finite,
and the range of those functions is contained in the set $2^{-\nu}\mathbb{Z}$, for some fixed integer $\nu$.
For this initial data, we can construct an approximate solution $\sigma_{\nu}^{(j)}(\tau,y)$ by the method introduced in \S \ref{sec:scalar scheme}. Then the solution $\sigma_{\nu}^{(j)}(\tau,y)$ satisfies
\begin{eqnarray}
&&TV(\sigma_{\nu}^{(j)}(\tau,\cdot))\le C\,TV(\sigma_{\nu}^{(j)}(0,\cdot))\le C\,TV(U^1(\cdot)),\label{4.2a}\\[2mm]
&&\|\sigma_{\nu}^{(j)}(\tau,\cdot)\|_{L^{\infty}}\le C\,\|\sigma_{\nu}^{(j)}(0,\cdot)\|_{L^{\infty}}
\le C \|U^1(\cdot)\|_{L^{\infty}},\label{4.2b}\\[2mm]
&&\|\sigma^{(j)}_{\nu}(\tau,\cdot)-\sigma^{(j)}_{\nu}(s,\cdot)\|_{L^1}\le L|\tau-s|. \label{4.2c}
\end{eqnarray}
Then, from Helly's theorem, for every $\tau$,
there exists a subsequence of functions (still denoted as) $\sigma_{\nu}^{(j)}(\tau, y)$ so that
$$
\sigma_{\nu}^{(j)}(\tau,y)\longrightarrow\sigma_{\tau}^{(j)}(y) \qquad \mbox{for~a.e.}~y\in\mathbb{R},
$$
and
$$
\|\sigma_{\tau}^{(j)}(\cdot)\|_{L^{\infty}}\le C \|U^1(\cdot)\|_{L^{\infty}},
 \qquad TV(\sigma_{\tau}^{(j)}(\cdot))\le C\, TV(U^1(\cdot)).
$$
By standard argument, there exists a subsequence $\sigma_{\nu_j}^{(j)}$
such that $\sigma_{\nu_j}^{(j)}(\tau,\cdot)\rightarrow \sigma^{(j)}(\tau,\cdot)$ pointwise, which is also in
$L^1_{loc}(\mathbb{R};\mathbb{R}^n)$, at every rational time $\tau>0$.
Define
$$
\sigma^{(j)}(\tau,y)=\lim_{m\rightarrow\infty}\sigma^{(j)}(\tau_m,y),
$$
where $\tau_m$ is a rational time and $\tau_m\rightarrow \tau$.
Then, by continuity of time in \eqref{4.2c}, we conclude that $\sigma_{\nu_j}^{(j)}\rightarrow\sigma^{(j)}$ in
$L^1_{loc}([0,\infty)\times\mathbb{R}; \mathbb{R}^n)$, and $\sigma^{(j)}$
satisfies
\begin{eqnarray*}
&&\int_{-\infty}^{\infty}|\sigma^{(j)}(\tau,y)-\sigma^{(j)}(s,y)|\mbox{d}y
\le L|\tau-s|\qquad \mbox{for~all}~\tau,s\ge 0,\\
&& TV(\sigma^{(j)}(\tau,\cdot))\le C\,TV(U^1),
\quad \|\sigma^{(j)}(\tau,\cdot)\|_{L^{\infty}}\le C\|U^1(\cdot)\|_{L^{\infty}}\qquad \mbox{for~all}~\tau\ge 0.
\end{eqnarray*}

\medskip
{\it $Step~2$. Estimate of the term
$\|S_hU_{w,\nu}^{\varepsilon}(t,\cdot)-U_{w,\nu}^{\varepsilon}(t+h,\cdot)\|_{L^1(I)}$ for $h$ small enough,
where $I:=[a,b]$,
$$
U_{w,\nu}^{\varepsilon}(t,x)
=U^0+\varepsilon\sum_{j=1}^n\sigma_{\nu}^{(j)}(\varepsilon t,x-\lambda_j^0t)\rr_j^0,
$$
 and $(t,x)$ is not a point of interaction of
$\sigma_{\nu}^{(j)}(\varepsilon t,x-\lambda_j^0t)$ for all $x\in\mathbb{R}$ and $j=1,\cdots,n$.}

First, consider the case that $(t,x_0)$ is a jump point of one and only one function
of $\big\{\sigma_{\nu}^{(j)}(\tau,y)\big\}_{j=1}^n$, say
$\sigma_{\nu}^{(j)}(\tau,y)$.
From \S \ref{sec:riemann comparison}, we can derive the following estimate:
\begin{equation}\label{1}
\|S_h(U_{w,\nu}^{\varepsilon}(t,\cdot))-U_{w,\nu}^{\varepsilon}(t+h,\cdot)\|_{L^1(I)}
\le C\big|\sigma^{(j)}_{\nu+}-\sigma^{(j)}_{\nu-}\big| {
(\max_j|\sigma_{\nu-}^{(j)}|+\max_j|\sigma_{\nu+}^{(j)}|)}h \varepsilon^2,
\end{equation}
where $\sigma^{(j)}_{\nu\pm}:=\lim_{x\rightarrow x_0\pm}\sigma^{(j)}_{\nu}(\varepsilon t,x-\lambda_j^0t)$,
when $h$ is so small that there is no interaction, and $I$ contains only the wave fronts generated at point
$(t,x_0)$.

\smallskip
We now consider the general case.
Without loss of generality, assume that $(t,x_0)$ is a jump point
of $\sigma_{\nu}^{(j)}$ and $\sigma_{\nu}^{(k)}$.

{\it Claim. Assume that $F(U)$ satisfies all the assumptions in Theorem {\rm \ref{thm:compacted initial data}},
$j$ and $k$ are two distinct integers, $(\tau_0,y_0)$ is a jump point of and only of
$\sigma_{\nu}^{(j)}$ and $\sigma_{\nu}^{(k)}$. Let
$$
\sigma_{\nu\pm}^{(j)}=\lim_{y\rightarrow y_0\pm}\sigma_{\nu}^{(j)}(\tau_0,y)
$$
and
$$
\sigma_{\nu\pm}^{(k)}=\lim_{y\rightarrow y_0\pm}\sigma_{\nu}^{(k)}(\tau_0,y),
$$
$(t_0,x_0)$ is the corresponding point to $(\tau_0,y_0)$,
$$
U^{\varepsilon}_{\pm}=U^0+\varepsilon\sum_{j=1}^n\sigma_{\nu_{\pm}}^{(j)}\rr_j^0, \quad
U^{\varepsilon}_+=\Phi(U_-^{\varepsilon};\beta_1,\cdots,\beta_n).
$$
Then we have
\begin{eqnarray*}
\beta_m&=&\big(\delta_{mj}|\sigma_{\nu+}^{(j)}-\sigma_{\nu-}^{(j)}|+
\delta_{mk}|\sigma_{\nu+}^{(k)}-\sigma_{\nu-}^{(k)}|\big)\varepsilon\\[2mm]
&& +O(1)(|\sigma_{\nu+}^{(j)}-\sigma_{\nu-}^{(j)}|+|\sigma_{\nu+}^{(k)}-\sigma_{\nu-}^{(k)}|) {(\max_j|\sigma_{\nu-}^{(j)}|+\max_j|\sigma_{\nu+}^{(j)}|)}\,\varepsilon^2.
\end{eqnarray*}
}

This property can be proved by combining the properties in the proof of
Lemma \ref{lem:genius nonlinear riemann problem} and the ideas for the proof
of Lemma \ref{lem:linearly degerate}. We omit the details.

With this, we can show
\begin{eqnarray}\label{2}
&&\|S_h(U_{w,\nu}^{\varepsilon}(t,\cdot))-U_{w,\nu}^{\varepsilon}(t+h,\cdot)\|_{L^1(I)}\notag\\
&&\,\, \le C \big(|\sigma^{(j)}_{\nu+}-\sigma^{(j)}_{\nu-}|+|\sigma^{(k)}_{\nu+}-\sigma^{(k)}_{\nu-}|\big) {(\max_j|\sigma_{\nu-}^{(j)}|+\max_j|\sigma_{\nu+}^{(j)}|)} h\varepsilon^2,
\end{eqnarray}
when $h$ is so small that there is no interaction and $I$ contains only the wave fronts generated
at the point $(t,x_0)$.
The proof is similar to the one in Propositions \ref{lem:local L1 estimates for genius nonlinear}
and \ref{lem:local L1 estimates for linearly degenerate};
the difference here is that we should divide the interval into three parts:
the additional part is near the $j$th-wave.
Thus, we estimate not only $\dot{S}(\beta_k)-\dot{S}_{B(t,x)}(\sigma)$
or $\lambda_k(U_k)-\lambda^{(k)}_{B(t,x)}(\sigma)$ as in
Proposition \ref{lem:local L1 estimates for genius nonlinear} or
\ref{lem:local L1 estimates for linearly degenerate},
but also $\dot{S}(\beta_j)-\dot{S}_{B(t,x)}(\sigma)$
or $\lambda_j(U_j)-\lambda^{(j)}_{B(t,x)}(\sigma)$.

With \eqref{1} and \eqref{2},
we can derive an estimate for arbitrary finite interval $I$ by dividing the interval into
some subintervals, every one of which contains only one jump point.
Since the number of these subsets is finite,  we can sum together to obtain
\begin{equation}\label{3}
\|S_h(U_{w,\nu}^{\varepsilon}(t,\cdot))-U_{w,\nu}^{\varepsilon}(t+h,\cdot)\|_{L^1(I)}\le
C\sum_{j=1}^n TV_{(t,x)\in\{t\}\times I}(\sigma^{(j)}_{\nu}(\varepsilon t,y)){\,\max_j|\sigma_{\nu}^{(j)}|h\varepsilon^2},
\end{equation}
where $(\tau,y)=(\varepsilon t, x-\lambda_j^0t)$.

\medskip
{\it $Step~3$. Completion of the proof}. For an arbitrary finite interval $J=[a,b]$,
let $I=[a-\hat{\lambda}t,b+\hat{\lambda}t]$.
Then, from the definition in Lemma \ref{lem:local semigroup estimate}, we have $I_t=J$.
For any initial data $U^1(x)\in BV\cap L^1$,
$\sigma^{(j)}$ and $U^{\varepsilon}_{w}(t,x)$ are defined from \eqref{scaler equation},
\eqref{og expansion}, and \eqref{scaler initial data}.
Then, following Step 1, we construct piecewise constant functions
$\sigma_{\nu}^{(j)}(\tau,y)$ so that
\begin{eqnarray*}
\|U^{\varepsilon}(t,\cdot)-U^{\varepsilon}_w(t,\cdot)\|_{L^1(J)}
&\le& \|U^{\varepsilon}(t,\cdot)-U^{\varepsilon}_{\nu}(t,\cdot)\|_{L^1(J)}
    +\|U^{\varepsilon}_w(t,\cdot)-U^{\varepsilon}_{w,\nu}(t,\cdot)\|_{L^1(J)}\\[2mm]
&& \;  +\|U^{\varepsilon}_{\nu}(t,\cdot)-U^{\varepsilon}_{w,\nu}(t,\cdot)\|_{L^1(J)}\\[2mm]
&=& I_1+I_2+I_3.
\end{eqnarray*}
For $I_1$, from Lemma \ref{prop:stability},
    \begin{eqnarray}
    I_1&=&\|S_t(U^0+\varepsilon U^1(\cdot))-S_t(U^0+\varepsilon\sum_{j=1}^n\sigma_{\nu}^{(j)}(0,\cdot)\rr_j^0)\|_{L^1(J)}\\[2mm]
    &\le &L\|U^0+\varepsilon U^1(\cdot)-\big(U^0+\varepsilon\sum_{j=1}^n\sigma_{\nu}^{(j)}(0,\cdot)\rr_j^0\big)\|_{L^1}\\[2mm]
    &\le &C\sum_{j=1}^n\|\sigma^{(j)}(0,\cdot)-\sigma^{(j)}_{\nu}(0,\cdot)\|_{L^1}.
    \end{eqnarray}
Hence, $I_1\rightarrow 0$ when $\nu\rightarrow \infty$.

For $I_2$, it is reduced to estimate $\|\sigma_{\nu}^{(j)}(\tau,\cdot)-\sigma^{(j)}(\tau,\cdot)\|_{L^1(J)}$.
   From Step 1, we can select a rational time sequence $\{\tau_m\}_{m=1}^{\infty}$
   and a subsequence (still denoted)  $\{\nu\}$ such that
   $\tau_m\rightarrow\tau$ when $m\rightarrow\infty$
   and, for all $m$, $\sigma_{\nu}^{(j)}(\tau_m,y)\rightarrow\sigma^{(j)}(\tau_m,y)$ for a.e. $y\in\mathbb{R}$,
   when $\nu\rightarrow\infty$.
   Then
   $$
   \|\sigma_{\nu}^{(j)}(\tau_m,\cdot)-\sigma^{(j)}(\tau_m,\cdot)\|_{L^1(J)}\rightarrow 0 \qquad\mbox{when}\quad
   \nu\rightarrow\infty.
   $$
   Using the Lipschitz continuity \eqref{lip continuity}, we obtain that
    $I_2\rightarrow 0$ when $\nu\rightarrow\infty$.

    For $I_3$, from Lemma \ref{lem:local semigroup estimate}, we obtain
    \begin{eqnarray*}
    I_3&=&\|U_{\nu}^{\varepsilon}(t,\cdot)-U^{\varepsilon}_{w,\nu}(t,\cdot)\|_{L^1(I_t)}\\
    &\le& L\int_0^t\liminf_{h\rightarrow 0+}
    \frac{\|U_{w,\nu}^{\varepsilon}(s+h,\cdot)-S_h(U_{w,\nu}^{\varepsilon}(s,\cdot))\|_{L^1(I_{s+h})}}{h}
    \mbox{d}s\\
    &\le& L\int_0^t\liminf_{h\rightarrow 0+}\frac{C\varepsilon^2h\sum_{j=1}^{n}TV(\sigma_{\nu}^{(j)}(\varepsilon s,\cdot)) \,{\max_j||\sigma_{\nu}^{(j)}(\varepsilon s,\cdot)||_{\infty}}}{h}\mbox{d}s\\
    &\le& C \varepsilon^2 \int_0^t  {\big(TV(U^1)\big)^2} \mbox{d}s\\
    &=& C\, \big(TV(U^1)\big)^2t \varepsilon^2,
    \end{eqnarray*}
    where we have used the fact that $TV(\sigma_{\nu}^{(j)}(\varepsilon t,\,\cdot))\leq C\,TV(\sigma^{(j)}_{\nu}(0,\,\cdot))$
    for all $t>0$.
    With these three estimates together, passing limit $\nu\rightarrow\infty$, we obtain that,
    for any arbitrary finite interval $J:= [a,b]$,
    $$
    \|U^{\varepsilon}(t,\cdot)-U^{\varepsilon}_w(t,\cdot)\|_{L^1(J)}\le C\, \big(TV(U^1)\big)^2t \varepsilon^2.
    $$
    Finally, let $a\rightarrow-\infty$ and $b\rightarrow\infty$, we have
    {
    \begin{align}
    \|U^{\varepsilon}(t,\cdot)-U^{\varepsilon}_w(t,\cdot)\|_{L^1}
    \le C\, \big(TV(U^1)\big)^2 t \varepsilon^2.
    \end{align}}
Therefore, for $0\leq t\leq T_0$, we obtain
 \begin{align}\label{coreissue}
    \|U^{\varepsilon}(t,\cdot)-U^{\varepsilon}_w(t,\cdot)\|_{L^1}
    \le C\,T_0 \varepsilon^2.
    \end{align}
\end{proof}

As a direct corollary, we conclude
\begin{coll}
Assume that $F$ satisfies the assumptions in Theorem {\rm \ref{thm:compacted initial data}},
and $U^1\in BV\cap L^1(\mathbb{R};\mathbb{R}^n)$ (not necessarily with compact support).
Consider an entropy solution $U^{\varepsilon}(t,x)$ of \eqref{ngo1}, which is the SRS, and
the weakly nonlinear geometric optics expansion $U^{\varepsilon}_w(t,x)$ defined by
\eqref{scaler equation}, \eqref{og expansion}, and \eqref{scaler initial data}.
Then there exists $\varepsilon_0>0$ such that, when $\varepsilon\in (0, \varepsilon_0]$, we have
\begin{equation} \label{4.13}
\|U^{\varepsilon}(t,\cdot)-U^{\varepsilon}_w(t,\cdot)\|_{L^1}\le C t\varepsilon^2
\qquad\,\,\mbox{for some constant $C>0$ independent of $\varepsilon$ and $t$}.
\end{equation}
\end{coll}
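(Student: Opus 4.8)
The plan is to observe that the proof already carried out for Theorem~\ref{thm:compacted initial data} on the interval $0\le t\le T_0$ in fact establishes more than is stated there: nowhere in Steps~1--3 of that proof is the compactness of the support of $U^1$ used, and nowhere is the restriction $t\le T_0$ used. The only structural input from $U^1$ is membership in $BV$, which supplies the uniform bounds on $TV(\sigma^{(j)}(\tau,\cdot))$ and $\|\sigma^{(j)}(\tau,\cdot)\|_{L^\infty}$; the extra hypothesis $U^1\in L^1$ is needed only to make $U^\varepsilon(t,\cdot)-U^\varepsilon_w(t,\cdot)$ an element of $L^1(\mathbb{R})$ and to kill the piecewise-constant approximation error. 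So the plan is simply to re-run that proof with an arbitrary $t>0$ in place of $t\in[0,T_0]$.

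First I would repeat Step~1: since $U^1\in BV\cap L^1$, each $\sigma^{(j)}(0,\cdot)=\ll_j(U^0)\cdot U^1(\cdot)$ can be approximated in $L^1$ and in $L^\infty$ by a piecewise constant function $\sigma^{(j)}_\nu(0,\cdot)$ with \emph{finitely many} jumps and range in $2^{-\nu}\mathbb{Z}$, with $TV(\sigma^{(j)}_\nu(0,\cdot))\le TV(\sigma^{(j)}(0,\cdot))\le C\,TV(U^1)$; the $L^1$-integrability of $U^1$ is exactly what permits only finitely many fronts. Running the scalar scheme of \S\ref{sec:scalar scheme} and passing to the limit by Helly's theorem produces $\sigma^{(j)}$, which is the entropy solution of \eqref{scaler equation} or of \eqref{scalar-equation-2} with data \eqref{scaler initial data}. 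By $L^1$-contraction for \eqref{scaler equation} and the triviality of \eqref{scalar-equation-2}, $\sigma^{(j)}(\tau,\cdot)\in L^1(\mathbb{R})$ for every $\tau\ge 0$, so $U^\varepsilon_w(t,\cdot)-U^0\in L^1(\mathbb{R})$, as needed.

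Next I would repeat Step~2: the local comparison estimate \eqref{3} holds on every finite interval with constants independent of the interval, so summing over the finitely many fronts and letting the interval exhaust $\mathbb{R}$ gives $\|S_h U^\varepsilon_{w,\nu}(t,\cdot)-U^\varepsilon_{w,\nu}(t+h,\cdot)\|_{L^1(\mathbb{R})}\le C\,\big(TV(U^1)\big)^2\,h\,\varepsilon^2$ for $h$ small. Feeding this into the standard error formula \eqref{semigroup estimate} (equivalently, Lemma~\ref{lem:local semigroup estimate} with $I_0\uparrow\mathbb{R}$) yields $\|U^\varepsilon_\nu(t,\cdot)-U^\varepsilon_{w,\nu}(t,\cdot)\|_{L^1}\le C\,\big(TV(U^1)\big)^2\,t\,\varepsilon^2$. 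Finally I would split $\|U^\varepsilon(t,\cdot)-U^\varepsilon_w(t,\cdot)\|_{L^1(J)}\le I_1+I_2+I_3$ on a finite interval $J$ exactly as in Step~3: $I_1\to 0$ as $\nu\to\infty$ by the $L^1$-Lipschitz stability of the SRS (Lemma~\ref{prop:stability}) together with the $L^1$-convergence of the initial data; $I_2\to 0$ by the $L^1_{loc}$-convergence $\sigma^{(j)}_\nu\to\sigma^{(j)}$ and the time-Lipschitz bound \eqref{4.2c}; and $I_3$ is the estimate just obtained. Passing $\nu\to\infty$ and then $J\uparrow\mathbb{R}$ (with the bound uniform in $J$) gives \eqref{4.13}.

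The main point requiring attention is purely bookkeeping: one must pass from finite intervals to the whole line twice --- once in the local estimate and once at the very end --- and check that all constants there are uniform in the interval, which holds because the wave-interaction bounds of \S\ref{sec:riemann comparison} are local. What is genuinely lost without compact support is the $t$-independent bound \eqref{compact}: that refinement relies on the eventual separation of the $n$ wave families, which fails when $U^1$ is spread over all of $\mathbb{R}$, so the linear-in-$t$ factor in \eqref{4.13} is the best one can expect from this argument.
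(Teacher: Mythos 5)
Your proposal is correct and follows essentially the same route as the paper: the paper treats this corollary as an immediate byproduct of the Part~I argument in \S\ref{sec:main thm}, whose Steps~1--3 already establish $\|U^{\varepsilon}(t,\cdot)-U^{\varepsilon}_w(t,\cdot)\|_{L^1}\le C\,(TV(U^1))^2\,t\,\varepsilon^2$ for all $t\ge 0$ and all $U^1\in BV\cap L^1$ before the restriction $t\le T_0$ is imposed in the final line. Your added observations --- that only $BV\cap L^1$ is used in those steps and that the $t$-independent bound genuinely requires the wave-family separation available only for compactly supported data --- accurately reflect the paper's own reasoning.
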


However, estimate \eqref{4.13} is not really strong enough to justify
the weakly nonlinear geometric optics approximation as noted in
Schochet \cite{s-jde1994473}.
In \S 6, we further develop the approach to improve \eqref{4.13} into
the stronger error estimate \eqref{ieq:noncompact case} in Theorem 1.2,
which is strong enough indeed to justify the approximation
even for the case when  $U^1\in BV\cap L^1(\mathbb{R};\mathbb{R}^n)$ without compact support.

{
\section{Proof of Theorem \ref{thm:compacted initial data}, Part II: $t\geq T_0$.}\label{sec:compact support}}

Notice from \S  \ref{sec:main thm} that,
when $t\geq T_0$, the compact sets $K_j(t), j=1, 2, \cdots, n$,
are disjoint:
$$
K_j(t)\cap K_k(t)=\phi \qquad\mbox{if  $j\neq k$.}
$$
In this section, we give a careful calculation based on this and
complete the proof of Theorem \ref{thm:compacted initial data}.
The constant $C>0$ below is the universal constant, independent
of $(\varepsilon, \nu, h, t, \tau)$ in the proof.

We first refine expansion \eqref{og expansion}
or \eqref{systems corresponding initial data}
by the following corresponding approximation:
\begin{eqnarray}
V^{\varepsilon}_{\nu}
&=& U^0+\varepsilon\sum_{j}\sigma^{(j)}_{\nu}(\varepsilon t, x-\lambda_j^0t)\rr_j^0 +\frac{\varepsilon^2}{2}\sum_{j\in N}\big(\sigma^{(j)}_{\nu}(\varepsilon t, x-\lambda_j^0t)\big)^2(\rr_j^0\cdot\nabla)\rr_{j}^0\nonumber \\
&& +\varepsilon^2\sum_{j_i\in L}E_{\nu}^{(j)}(x-\lambda_j^0(t-T_0)), \label{auxi function-a}
\end{eqnarray}
where $j\in N$ means that the corresponding $j-$th characteristic field is genuinely nonlinear,
while $j_i\in L$ means that the corresponding characteristic field is linearly degenerate
and all $\{j_i\}$ together constitute the $j$-th characteristic field. Furthermore,
$$
E^{(j)}_{\nu}=\frac{1}{\varepsilon^2}\big(W^{(j)}_{\nu}-U_0-\varepsilon\sigma_{\nu}^{(j)}\rr_j^0\big),
$$
and $W_{\nu}^{(j)}$ is defined as follows.
First set $[a,b]:=\mbox{supp}_{x}\sigma^{(j)}(\varepsilon T_0,x-\lambda_j^{0}T_0)$
and let $x_{j_i}\in[a,b)$ be a point at which $\sigma_{\nu}^{(j)}(\varepsilon T_0,x-\lambda_j^0T_0)$
has a jump. Denote the set of these jump points together by $J_{j}$ and then define the piecewise constant
function $W^{(j)}_{\nu}$ with the same jump points in $J_{j}$.
At each jump point $x_{j_i}\in J_j$,
if
\begin{equation}\label{5.2a}
\varepsilon(\sigma^{(j_i)}_{\nu+}-\sigma^{(j_i)}_{\nu-})\rr_k^0=\Phi(\beta_1,\cdot\cdot\cdot,\beta_n),
\end{equation}
and the multiplicity of the $j$-th characteristic field is $m$,
then the difference of $W^{(j)}_{\nu}$ at this jump point is defined by
\begin{equation}\label{5.3a}
W^{(j)}_{\nu+}-W^{(j)}_{\nu-}:=\Phi(0,\cdot\cdot\cdot,0,\beta_{j_1},\cdot\cdot\cdot\beta_{j_m},0,\cdot\cdot\cdot,0).
\end{equation}
For the point $x<a$ or $x>b$, we define $W^{(j)}_{\nu}=U_0$.
Then we have the property that
$K_j(t)=\mbox{supp}_x\{W^{(j)}_{\nu}(x-\lambda^0_j(t-T_0))-U^0\}$.
From the proof of Lemma \ref{prop:CS} later,
we will see that the error term satisfies that $|E^{(j)}_{\nu}|\leq C$.

\smallskip
Now we consider the case $j\in N$.
As the bootstraps in \S \ref{sec:riemann comparison}, we have the following lemma.

\smallskip
\begin{lem}\label{lem:GN Riemann compact support}
Suppose that the assumptions in Lemma {\rm \ref{lem:genius nonlinear riemann problem}} hold.
Let
$$
V_{\nu\pm}^{\varepsilon}:=U^0+\varepsilon\sigma^{(k)}_{\nu\pm}\rr_{k}^0
+\frac{\varepsilon^2}{2}\big(\sigma_{\nu\pm}^{(k)}\big)^2(\rr_k^0\cdot\nabla) \rr_k^0
$$
with $\Phi(V_{\nu-}^{\varepsilon};\beta_1,\cdot\,\cdot\,\cdot,\beta_n)=V_{\nu+}^{\varepsilon}$. Then we have
\begin{equation}
\beta_j=\sigma\varepsilon\delta_{kj}+O(1)\epsilon^3\big((\sigma_{\nu-}^{(k)})^2\sigma+\sigma_{\nu-}^{(k)}\sigma^2+\sigma^3\big),
\end{equation}
where $\sigma=\sigma_{\nu+}^{(k)}-\sigma_{\nu-}^{(k)}$,
and $\delta_{kj}$ is the Kronecker delta.
\end{lem}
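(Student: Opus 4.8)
The plan is to follow the bootstrap in the proof of Lemma~\ref{lem:genius nonlinear riemann problem}, but to carry every expansion one order of $\varepsilon$ further and to exploit the fact that the quadratic correction built into $V^\varepsilon_{\nu\pm}$ matches exactly the second-order geometry of the $k$-wave curve. Write $s_\pm:=\sigma^{(k)}_{\nu\pm}$, so $s_+=s_-+\sigma$. First I would record the exact identity
\[
V^\varepsilon_{\nu+}-V^\varepsilon_{\nu-}=\varepsilon\sigma\,\rr_k^0+\tfrac{\varepsilon^2}{2}\,(2s_-+\sigma)\,\sigma\,(\rr_k^0\cdot\nabla)\rr_k^0 ,
\]
which, crucially, contains no term of order $\varepsilon^3$ or higher, together with the expansions $V^\varepsilon_{\nu-}-U^0=\varepsilon s_-\rr_k^0+O(\varepsilon^2 s_-^2)$ and $\rr_j(V^\varepsilon_{\nu-})=\rr_j^0+\varepsilon s_-(\rr_k^0\cdot\nabla)\rr_j^0+O(\varepsilon^2 s_-^2)$. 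Since $|V^\varepsilon_{\nu+}-V^\varepsilon_{\nu-}|=O(\varepsilon|\sigma|)$ is small, the composite wave-curve map $\Phi$ from Lemma~\ref{prop:Riemann systems} applies, and I would use its standard second-order Taylor expansion at $\beta=0$ (cf.\ \cite{b-book2000}): $\Phi(V;0)=V$, $\partial_{\beta_j}\Phi(V;0)=\rr_j(V)$, and --- because the $k$-th field is genuinely nonlinear, so the $k$-shock and $k$-rarefaction curves have second-order contact --- $\partial^2_{\beta_k\beta_k}\Phi(V;0)=(\rr_k(V)\cdot\nabla)\rr_k(V)$.

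Matching orders of $\varepsilon$ in $\Phi(V^\varepsilon_{\nu-};\beta_1,\dots,\beta_n)=V^\varepsilon_{\nu+}$, the $O(\varepsilon)$ balance forces $\beta_j=\varepsilon\sigma\,\delta_{jk}+O(\varepsilon^2)$, so I may set $\beta_k=\varepsilon\sigma+b_k\varepsilon^2+O(\varepsilon^3)$ and $\beta_j=b_j\varepsilon^2+O(\varepsilon^3)$ for $j\neq k$, with coefficients $b_j=b_j(s_-,\sigma)$ to be determined. Substituting this ansatz into the second-order expansion of $\Phi$, the only $O(\varepsilon^2)$ contributions to the left-hand side are the vector $\sum_j b_j\rr_j^0$ (from the linear part), the vector $\sigma s_-(\rr_k^0\cdot\nabla)\rr_k^0$ (the $\varepsilon^2$-coefficient coming from expanding $\rr_k(V^\varepsilon_{\nu-})$ against $\beta_k=\varepsilon\sigma+\cdots$), and the vector $\tfrac12\sigma^2(\rr_k^0\cdot\nabla)\rr_k^0$ (from $\tfrac12\beta_k^2\,\partial^2_{\beta_k\beta_k}\Phi$); all remaining quadratic and cubic terms are $O(\varepsilon^3)$. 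Their sum must equal the $O(\varepsilon^2)$ term on the right, namely $\tfrac12(2s_-+\sigma)\sigma(\rr_k^0\cdot\nabla)\rr_k^0=(s_-\sigma+\tfrac12\sigma^2)(\rr_k^0\cdot\nabla)\rr_k^0$; the two $(\rr_k^0\cdot\nabla)\rr_k^0$-terms on the left already cancel this exactly, so the identity collapses to $\sum_j b_j\rr_j^0=0$, whence $b_j=0$ for every $j$ by pairing with $\ll_j^0$. This is the heart of the lemma, and it is precisely here that the choice of the correction $\tfrac{\varepsilon^2}{2}(\sigma^{(k)}_{\nu\pm})^2(\rr_k^0\cdot\nabla)\rr_k^0$ in the definition of $V^\varepsilon_{\nu\pm}$ is used: it is designed to cancel the otherwise-present $O(\varepsilon^2)$ error.

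It then remains to bound the $O(\varepsilon^3)$ remainder and to exhibit its dependence on $\sigma$. The cubic error has exactly three sources: the $O(|\beta|^3)$ remainder of $\Phi$, which is $O(\varepsilon^3\sigma^3)$; the term $\tfrac12\beta_k^2\big(\partial^2_{\beta_k\beta_k}\Phi(V^\varepsilon_{\nu-};0)-(\rr_k^0\cdot\nabla)\rr_k^0\big)=O(\varepsilon^2\sigma^2)\cdot O(\varepsilon s_-)=O(\varepsilon^3 s_-\sigma^2)$; and $\beta_k$ times the $O(\varepsilon^2 s_-^2)$ remainder in the expansion of $\rr_k(V^\varepsilon_{\nu-})$, which is $O(\varepsilon^3 s_-^2\sigma)$; the right-hand side contributes nothing beyond order $\varepsilon^2$ by the exact identity above. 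Pairing the resulting equation with $\ll_j^0$ and collecting these bounds gives $\beta_j=\sigma\varepsilon\,\delta_{kj}+O(1)\,\varepsilon^3\big((\sigma^{(k)}_{\nu-})^2\sigma+\sigma^{(k)}_{\nu-}\sigma^2+\sigma^3\big)$, as claimed.

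The step I expect to be the main obstacle is the $O(\varepsilon^2)$ bookkeeping: one must verify that, besides $\sum_j b_j\rr_j^0$, every $O(\varepsilon^2)$ vector on the left is a scalar multiple of $(\rr_k^0\cdot\nabla)\rr_k^0$ and that the scalar is exactly $s_-\sigma+\tfrac12\sigma^2$. This uses both the first-order normalization $\partial_{\beta_j}\Phi(V;0)=\rr_j(V)$ from Lemma~\ref{prop:Riemann systems} and the second-order contact property of the genuinely nonlinear $k$-wave curve; any discrepancy in either would leave an irreducible $O(\varepsilon^2)$ error and destroy the lemma.
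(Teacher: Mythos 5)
Your proposal is correct and follows essentially the same route as the paper: both arguments rest on the exact identity $V^{\varepsilon}_{\nu+}-V^{\varepsilon}_{\nu-}=\varepsilon\sigma\,\rr_k^0+\big(\tfrac{\varepsilon^2\sigma^2}{2}+\varepsilon^2 s_-\sigma\big)(\rr_k^0\cdot\nabla)\rr_k^0$, the normalizations $\partial_{\beta_j}\Phi(V;0)=\rr_j(V)$ and $\partial^2_{\beta_k\beta_k}\Phi(V;0)=(\rr_k(V)\cdot\nabla)\rr_k(V)$, and the resulting cancellation of all $O(\varepsilon^2)$ terms, with the same three sources of cubic error. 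The only difference is organizational — the paper differentiates the implicit relation twice in $\theta=\varepsilon\sigma$ and Taylor-expands $\beta_j(\theta)$, while you expand $\Phi$ in $\beta$ and match powers of $\varepsilon$ — which amounts to the same computation.
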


\begin{proof}
Let $\theta=\varepsilon\sigma$. Then
$$
V^{\varepsilon}_{\nu+}-V^{\varepsilon}_{\nu-}=\theta \rr_{k}^0+\frac{\theta^2}{2}(\rr_k^0\cdot\nabla)\rr_k^0
+\varepsilon\theta\sigma_{\nu-}^{(k)}(\rr_k^0\cdot\nabla)\rr_k^0.
$$
Thus we have the following equation:
\begin{equation}\label{equ:star}
\Phi(V^{\varepsilon}_{\nu-}; \, \beta_1,\,\cdot\,\cdot\,\cdot,\,\beta_n)=V^{\varepsilon}_{\nu-}+\theta \rr_k^0+\Big(\frac{\theta^2}{2}+\varepsilon\theta\sigma_{\nu-}^{(k)}\Big)(\rr_k^0\cdot\nabla)\rr_k^0.
\end{equation}
Clearly, by the implicit function theorem, we obtain a unique solution $(\beta_1, \cdots, \beta_n)(\theta)$ of \eqref{equ:star} such that
$\beta_j(0)=0, j=1, \cdots, n$.

\smallskip
Differentiating \eqref{equ:star} with respect to $\theta$ and then letting $\theta=0$, we have
\begin{equation}\label{5.6a}
\sum_{j}\frac{\partial\Phi(V^{\varepsilon}_{\nu-};\,\beta_1,\,\cdot\,\cdot\,\cdot,\,\beta_n)}{\partial\beta_j} \frac{\partial\beta_j}{\partial\theta}\big|_{\theta=0}
=\rr_k^0+\varepsilon\sigma_{\nu-}^{(k)}(\rr_k^0\cdot\nabla)\rr_k^0
  +\theta (\rr_k^0\cdot\nabla)\rr_k^0\big|_{\theta=0}.
\end{equation}
Since $K_i\cap K_j=\phi$ if $i\neq j$, we find that $\sigma^{i}\sigma^{j}=0$ if $i\ne j$.
Using this fact to calculate \eqref{5.6a}, we have
\begin{align*}
\sum_{j}\frac{\partial\Phi}{\partial\beta_j} \frac{\partial\beta_j}{\partial\theta}\big|_{\theta=0} =\sum_j \rr_j(V_{\nu-}^{\varepsilon})\frac{\partial\beta_j}{\partial\theta}\big|_{\theta=0}
=\rr_k(V_-^{\varepsilon})+O(1)(\sigma_{\nu-}^{(k)})^2 \varepsilon^2,
\end{align*}
which yields
\begin{equation}\label{5.3ab}
\frac{\partial\beta_j}{\partial\theta}\big|_{\theta=0}=\delta_{ij}+O(1)(\sigma_{\nu-}^{(k)})^2 \varepsilon^2.
\end{equation}

Next, taking twice derivatives on both sides of \eqref{equ:star} with respect to $\theta$ and then letting $\theta=0$, then
$$
\sum_{i,j}\frac{\partial^2\Phi}{\partial\beta_i\partial\beta_j} \frac{\partial\beta_i}{\partial\theta}\frac{\partial\beta_j}{\partial\theta}\big|_{\theta=0} +\sum_j\frac{\partial\Phi}{\partial\beta_j}\frac{\partial^2\beta}{\partial\theta^2}\big|_{\theta=0}
=(\rr_k^0\cdot\nabla)\rr_k^0.
$$
Thus, we have
$$
(\rr_k(V_{\nu-}^{\varepsilon})\cdot\nabla) \rr_k(V_{\nu-}^{\varepsilon})
+O(1)(\sigma_{\nu-}^{(k)})^2 \varepsilon^2
+\sum_j
\rr_j(V_{\nu-}^{\varepsilon})\frac{\partial^2\beta_j}{\partial\theta^2}\big|_{\theta=0}
=(\rr_k^0\cdot\nabla)\rr_k^0.
$$
Noting that $(\rr_k(V_{\nu-}^{\varepsilon})\cdot\nabla)\rr_k(V_{\nu-}^{\varepsilon})
=(\rr_k^0\cdot\nabla)r_k(U^0)+O(1)\sigma_{\nu-}^{(k)} \varepsilon$,
we obtain
\begin{equation}\label{5.4a}
\frac{\partial^2\beta_j}{\partial\theta^2}\big|_{\theta=0}=O(1)\sigma_{\nu-}^{(k)} \varepsilon.
\end{equation}
Combining \eqref{5.3ab}--\eqref{5.4a} with the Taylor expansion, we finally have
\begin{align*}
\beta_j(\theta)
=&\beta_j(0)+\frac{\partial\beta_j}{\partial\theta}\big|_{\theta=0}\theta +\frac{\partial^2\beta_j}{\partial\theta^2}\big|_{\theta=0}\theta^2+O(1)\theta^3\\
=&\sigma\varepsilon\delta_{kj}+O(1)\big((\sigma_{\nu-}^{k})^2\sigma+\sigma_{\nu-}^{(k)}\sigma^2+\sigma^3\big)\,\varepsilon^3.
\end{align*}
This completes the proof.
\end{proof}

With this better estimate, as in \S \ref{sec:riemann comparison}, we further have

\begin{lem}\label{prop:GN Riemann compact support}
Suppose that the assumptions in Theorem {\rm \ref{thm:compacted initial data}} hold.
Then there exists $h_0>0$ such that, when $h\in (0, h_0]$, for $t \geq T_0$, we have
\begin{eqnarray*}
&& \int_{x_0-\hat{\lambda}h<x<x_0+\hat{\lambda}h} |S_h(V_{\nu}^{\varepsilon}(t_0,x))-V^{\varepsilon}_{\nu}(t_0+h,x)|\mbox{d} x \\
&&\leq C(\max|\sigma_{\nu-}^k|)^2|\sigma|h \varepsilon^3 + C2^{-\nu}|\sigma|h \varepsilon^2.
\end{eqnarray*}
\end{lem}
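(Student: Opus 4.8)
The plan is to rerun the argument of Proposition~\ref{lem:local L1 estimates for genius nonlinear}, now comparing the exact Riemann fan of \eqref{ngo1} with the more accurate approximation $V_\nu^\varepsilon$ of \eqref{auxi function-a} instead of with $U_{w,\nu}^\varepsilon$, and exploiting the two features that become available once $t_0\ge T_0$: the separation $K_i(t_0)\cap K_j(t_0)=\phi$ for $i\neq j$, and the sharp strength estimate $\beta_j=\sigma\varepsilon\,\delta_{kj}+O(1)\varepsilon^3\big((\sigma_{\nu-}^{(k)})^2\sigma+\sigma_{\nu-}^{(k)}\sigma^2+\sigma^3\big)$ of Lemma~\ref{lem:GN Riemann compact support}. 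First I would localize: since $(t_0,x_0)\in K_k(t_0)$ and the $K_j(t_0)$ are pairwise disjoint, near $(t_0,x_0)$ one has $\sigma_\nu^{(j)}\equiv 0$ for $j\neq k$ and every corrector $E_\nu^{(j)}$ (supported in the $K_j$ with $j_i\in L$) vanishes, so there $V_\nu^\varepsilon$ reduces to $U^0+\varepsilon\sigma_\nu^{(k)}\rr_k^0+\tfrac{\varepsilon^2}{2}(\sigma_\nu^{(k)})^2(\rr_k^0\cdot\nabla)\rr_k^0$, taking the constant values $V_{\nu\mp}^\varepsilon$ on the two sides of the jump line issued from $(\tau_0,y_0)$, which moves with speed $\dot{S}_{B(t,x)}(\sigma)$ if $\sigma<0$ and $\lambda_{B(t,x)}^{(k)}(\sigma)$ if $\sigma>0$ by Lemma~\ref{lem:burger's riemann problem}. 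For $h$ below a threshold $h_0$ at which no interaction has yet occurred and the strip $(x_0-\hat{\lambda}h,x_0+\hat{\lambda}h)$ still isolates the fronts issued from $(t_0,x_0)$, the map $S_hV_\nu^\varepsilon(t_0,\cdot)$ on the strip is exactly the Riemann solution joining $V_{\nu-}^\varepsilon$ to $V_{\nu+}^\varepsilon$, with intermediate states $U_0=V_{\nu-}^\varepsilon,\dots,U_n=V_{\nu+}^\varepsilon$ and wave strengths $\beta_j$; by Lemma~\ref{lem:GN Riemann compact support} together with $|\sigma|\le 2\max_j\|\sigma_\nu^{(j)}\|_{L^\infty}$ one gets $\beta_k=\sigma\varepsilon+O(1)(\max_j\|\sigma_\nu^{(j)}\|_{L^\infty})^2|\sigma|\varepsilon^3$ and $|\beta_j|\le C(\max_j\|\sigma_\nu^{(j)}\|_{L^\infty})^2|\sigma|\varepsilon^3$ for $j\neq k$, with the $k$-wave a shock for $\sigma<0$ and a front-tracking rarefaction of size $\le 2^{-\nu}$ for $\sigma>0$.

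The crucial point is a speed comparison one power of $\varepsilon$ sharper than in Proposition~\ref{lem:local L1 estimates for genius nonlinear}. By disjointness the Taylor expansion of $\lambda_k(V_{\nu\pm}^\varepsilon)$ about $U^0$ contains no cross-family term $\varepsilon\sum_{j\neq k}\sigma_{\nu-}^{(j)}(\rr_j^0\cdot\nabla)\lambda_k^0$ — precisely the term that forced the $O(\varepsilon)$ discrepancy in \S\ref{sec:riemann comparison} — and since the quadratic corrector in $V_\nu^\varepsilon$ is exactly the second-order coefficient of the $k$-rarefaction curve, genuine nonlinearity gives the cancellation $\nabla^2\lambda_k^0(\rr_k^0,\rr_k^0)+\nabla\lambda_k^0\cdot(\rr_k^0\cdot\nabla)\rr_k^0=(\rr_k^0\cdot\nabla)\big((\rr_k\cdot\nabla)\lambda_k\big)\big|_{U^0}=(\rr_k^0\cdot\nabla)(1)=0$, so $\lambda_k(V_{\nu\pm}^\varepsilon)=\lambda_k^0+\varepsilon\sigma_{\nu\pm}^{(k)}+O(1)\varepsilon^3$. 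Feeding this, $|U_{k-1}-V_{\nu-}^\varepsilon|,\,|U_k-V_{\nu+}^\varepsilon|\le\sum_{j\neq k}|\beta_j|=O(1)(\max_j\|\sigma_\nu^{(j)}\|_{L^\infty})^2|\sigma|\varepsilon^3$, and the Rankine--Hugoniot expansion $\dot{S}(\beta_k)=\lambda_k(U_{k-1})+\tfrac12\beta_k+O(1)\beta_k^2$ into the difference, every cross-family and quadratic-geometry contribution drops out and one is left with $\dot{S}(\beta_k)-\dot{S}_{B(t,x)}(\sigma)=O(1)\beta_k^2+O(1)\varepsilon^3=O(1)\max_j\|\sigma_\nu^{(j)}\|_{L^\infty}|\sigma|\varepsilon^2$, the last step using $\beta_k^2=\sigma^2\varepsilon^2+O(\varepsilon^4)$ and $\sigma^2\le 2\max_j\|\sigma_\nu^{(j)}\|_{L^\infty}|\sigma|$; the analogous estimates hold between the fan end speeds $\lambda_k(U_{k-1}),\lambda_k(U_k)$ and $\lambda_{B(t,x)}^{(k)}(\sigma)$ when $\sigma>0$.

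Finally I would split the strip at time $t_0+h$ into two regions. On the parts swept by the small waves $\beta_j$, $j\neq k$, the exact solution runs through $U_0,\dots,U_{k-1}$ (resp. $U_k,\dots,U_n$) while $V_\nu^\varepsilon(t_0+h,\cdot)$ stays at $V_{\nu-}^\varepsilon$ (resp. $V_{\nu+}^\varepsilon$), so the integrand is $\le\sum_{j\neq k}|\beta_j|$ on a set of length $O(h)$, contributing $O(1)(\max_j\|\sigma_\nu^{(j)}\|_{L^\infty})^2|\sigma|h\varepsilon^3$. Near the $k$-wave, for $\sigma<0$ the mismatch between the exact shock position and the approximate jump has length $O(1)\max_j\|\sigma_\nu^{(j)}\|_{L^\infty}|\sigma|\varepsilon^2h$, across which $|S_hV_\nu^\varepsilon-V_\nu^\varepsilon(t_0+h,\cdot)|=O(\varepsilon|\sigma|)$, giving again $O(1)(\max_j\|\sigma_\nu^{(j)}\|_{L^\infty})^2|\sigma|h\varepsilon^3$ after one more use of $|\sigma|\le 2\max_j\|\sigma_\nu^{(j)}\|_{L^\infty}$; for $\sigma>0$ the exact centred rarefaction fan has width $(\lambda_k(U_k)-\lambda_k(U_{k-1}))h=O(\sigma\varepsilon h)$ while the approximation has a single jump, so this region contributes $O(\varepsilon|\sigma|)\cdot O(\sigma\varepsilon h)=O(1)\sigma^2\varepsilon^2h=O(1)2^{-\nu}|\sigma|h\varepsilon^2$ since $\sigma\le 2^{-\nu}$. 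Summing the two regions yields the claim, with $\varepsilon$ chosen small enough that the leftover $O(\varepsilon^3)$-speed errors — which, multiplied by the $O(\varepsilon|\sigma|)$ jump, are $O((\max_j\|\sigma_\nu^{(j)}\|_{L^\infty})^3|\sigma|\varepsilon^4h)$ — are absorbed into the $\varepsilon^3$-term. The hard part is exactly the sharp speed matching of the second paragraph: one must verify that the second-order corrector of $V_\nu^\varepsilon$ together with the support separation upgrades the $O(\varepsilon)$ speed discrepancy of \S\ref{sec:riemann comparison} to $O(\varepsilon^2)$ with the correct $|\sigma|$- and $\max_j\|\sigma_\nu^{(j)}\|_{L^\infty}$-weights, so that the only surviving quadratic-in-$\varepsilon$ error near the $k$-wave is the purely scalar Burgers term $O(\beta_k^2)$ — the very error the front-tracking scalar jump carries as well; the remaining bookkeeping is that of Propositions~\ref{lem:local L1 estimates for genius nonlinear} and~\ref{lem:local L1 estimates for linearly degenerate}, now with one extra power of $\varepsilon$ to spare.
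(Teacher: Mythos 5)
Your proposal is correct and follows essentially the same route as the paper: split into the shock case ($\sigma<0$) and the front-tracking rarefaction case ($\sigma\le C2^{-\nu}$), use the refined strength estimate of Lemma \ref{lem:GN Riemann compact support} to make the off-family waves $O(\varepsilon^3)$, and reduce the remaining error to the product of the $O(\varepsilon^2)$ speed discrepancy $S_k-S_{Bk}$ with the $O(\varepsilon|\sigma|)$ jump, exactly the paper's ``core estimate.'' The only difference is cosmetic: you spell out the cancellation $\nabla^2\lambda_k^0(\rr_k^0,\rr_k^0)+\nabla\lambda_k^0\cdot(\rr_k^0\cdot\nabla)\rr_k^0=(\rr_k^0\cdot\nabla)\big((\rr_k\cdot\nabla)\lambda_k\big)\big|_{U^0}=0$ coming from the normalization in \eqref{ngo3}, which the paper leaves implicit in its ``cumbersome calculations.''
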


\begin{proof} We divide the proof into two cases.

\smallskip
{\it Case 1 (shock case):} $\sigma<0$. In this case, by Lemma \ref{lem:GN Riemann compact support},
$\beta_k<0$. Denote $S_k$ the k-shock speed of $\beta_k$ in the $(t,x)$--coordinates and $V_k=\Phi(V_{\nu-}^{\varepsilon};\,\beta_1,\,\cdot\,\cdot\,\cdot,\beta_k,0,\cdot\,\cdot\,\cdot,0)$.
Then
$$
S_k=\lambda_k(V_{k-1})+\frac{1}{2}\beta_k+O(1)|\beta_k|^2
=\lambda_k^0+\sigma_{\nu-}^{(k)}\varepsilon +\frac{1}{2}\sigma\varepsilon +O(1)(\sigma^2+\varepsilon|\sigma|)\,\varepsilon^2.
$$
On the other hand, the shock speed of the Burgers equation of $\sigma^{k}$ in the $(t,x)$--coordinates is
$$
S_{Bk}=\lambda_k^0+\sigma_{\nu-}^{(k)}\varepsilon +\frac{1}{2}\sigma\varepsilon.
$$
Then
$$
S_k-S_{Bk}=O(1)(\sigma^2+|\sigma||\sigma_{\nu-}^{(k)}|)\varepsilon^2.
$$
Meanwhile, with the fact that $|V_{k-1}-V_n|+|V_k-V_0|=O(1)|\sigma|\varepsilon$,
we deduce the core estimate for the shock case:
$$
|S_k-S_{Bk}|\big(|V_{k-1}-V_{\nu+}^{\epsilon}|+|V_k-V_{\nu-}^{\varepsilon}|\big)
=O(1)\big(|\sigma|^3+|\sigma|^2|\sigma_{\nu-}^{(k)}|\big) \varepsilon^3.
$$
Then, with this estimate in hand, following the bootstrap in the proof of
Proposition \ref{lem:local L1 estimates for genius nonlinear} step by step,
using the results in Lemma \ref{lem:GN Riemann compact support}, and performing cumbersome calculations,
we have
\begin{equation}\label{5.5a}
\int_{x_0-\hat{\lambda}h<x<x_0+\hat{\lambda}h}|S_h(V_{\nu}^{\varepsilon}(t,x))-V^{\varepsilon}_{\nu}(t+h,x)| \mbox{d} x
\leq C(\max|\sigma_{\nu-}^k|)^2|\sigma|h \varepsilon^3.
\end{equation}

\medskip
{\it Case 2 (rarefaction case)}: $\sigma>0$. From the scheme for the scalar equation in \S \ref{sec:scalar scheme},
or by Lemma \ref{lem:burger's riemann problem}, we conclude that the strength of discontinuity satisfies
the following property:
$$
\sigma\leq C\, 2^{-\nu}.
$$
First, from Lemma \ref{lem:burger's riemann problem},
the speed of rarefaction front of the Burgers equation
in the $(t,x)$--coordinates is
$$
\lambda_{B}^{(k)}=\lambda_k^0 +\sigma_{\nu-}^{(k)}\varepsilon+\frac{1}{2}\sigma\varepsilon.
$$
On the other hand, the speed of characteristics is
\begin{align*}
&\lambda_k(V_{k-1}) =\lambda_k(V_{\nu-}^{\varepsilon})
   +O(1) \big((\sigma_{\nu-}^{(k)})^2\sigma+\sigma_{\nu-}^{(k)}\sigma^2+\sigma^3\big)\varepsilon^3 ,\\
&\lambda_k(V_{k})=\lambda_k(V_{\nu+}^{\varepsilon})
   +O(1)\big((\sigma_{\nu-}^{(k)})^2\sigma+\sigma_{\nu-}^{(k)}\sigma^2+\sigma^3\big)\varepsilon^3,\\
&\lambda_k(V_{k})-\lambda_k(V_{k-1})=O(1)\sigma\varepsilon
  +O(1)\big((\sigma_{\nu-}^{(k)})^2\sigma+\sigma_{\nu-}^{(k)}\sigma^2+\sigma^3\big)\varepsilon^3.
\end{align*}
Thus, by following exactly the proof of Proposition \ref{lem:local L1 estimates for genius nonlinear},
 direct computation yields that
\begin{equation}\label{5.6ab}
\int_{x_0-\hat{\lambda}h<x<x_0+\hat{\lambda}h}|S_h(V_{\nu}^{\varepsilon}(t_0,x))-V^{\varepsilon}_{\nu}(t_0+h,x)| \mbox{d} x \leq C\sigma^2h \varepsilon^2 \leq C 2^{-\nu}|\sigma|h \varepsilon^2.
\end{equation}

Combining \eqref{5.5a} with \eqref{5.6ab}, we arrive at the result.
\end{proof}

Next, consider the linearly degenerate case, namely, $\{j_{i}\}_{i=1}^m\subset L$.
By construction, all the values of $W^{(j)}_{\nu}$ lie on the curve $S_j(U_0)$,
{\it i.e.} the integral surface of the vector-fields spanned by $\rr_{j_i}$
passing through $U_0$, while $\lambda_j^0$ is the speed. Here $W^{(j)}_{\nu}$ is introduced
below \eqref{auxi function-a} to replace $U^0+\varepsilon\sum_{i=1}^{m}\sigma^{(j_i)}_{\nu}\rr_{j_i}^0$.
Thus, $W^{(j)}_{\nu}$ is exactly a Riemann solution. Then we have

\begin{lem}\label{prop:LD Riemann Compact support}
Suppose that the assumptions in Theorem {\rm \ref{thm:compacted initial data}} hold.
Then there exists $h_0>0$ such that, when $h\in (0, h_0]$, for $t \geq T_0$, we have
$$
\int_{x_0-\hat{\lambda}h<x<x_0+\hat{\lambda}h}|S_h(V_{\nu}^{\varepsilon}(t,x))-V^{\varepsilon}_{\nu}(t+h,x)| \mbox{d} x =0.
$$
\end{lem}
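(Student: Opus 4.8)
The plan is to show that, near the jump point $(t_0,x_0)$ with $t_0\geq T_0$, the auxiliary function $V^{\varepsilon}_{\nu}$ does not merely approximate but coincides \emph{exactly} with a rigidly translating piecewise constant profile built out of $W^{(j)}_{\nu}$, and that this profile is itself an exact entropy solution of \eqref{ngo1}. Once this is established, the consistency of the SRS with the Riemann solver (Lemma \ref{prop:Riemann systems}) together with the uniqueness part of Lemma \ref{prop:stability} force $S_h(V^{\varepsilon}_{\nu}(t_0,\cdot))$ to coincide with $V^{\varepsilon}_{\nu}(t_0+h,\cdot)$ throughout the light cone issued from $(t_0,x_0)$, so the integrand vanishes identically and the integral is $0$.

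First I would localize. Since $t_0\geq T_0$, the supports $K_1(t_0),\dots,K_n(t_0)$ are pairwise disjoint, and $x_0$ belongs to $K_j(t_0)$ with $j$ a linearly degenerate family $\{j_i\}_{i=1}^m\subset L$. Hence, on a small interval about $x_0$: every $\sigma^{(k)}_{\nu}(\varepsilon t,x-\lambda^0_kt)$ with $k\notin\{j_1,\dots,j_m\}$ vanishes; the genuinely nonlinear quadratic correction $\frac{\varepsilon^2}{2}\sum_{k\in N}(\sigma^{(k)}_{\nu})^2(\rr^0_k\cdot\nabla)\rr^0_k$ vanishes; and each error term $E^{(k)}_{\nu}$ with $k\neq j$ vanishes, its support lying in $K_k(t)$. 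Using $\partial_\tau\sigma^{(j_i)}\equiv0$ so that $\sigma^{(j_i)}_{\nu}(\varepsilon t,x-\lambda^0_jt)=\sigma^{(j_i)}_{\nu}(\varepsilon T_0,x-\lambda^0_jt)$, the definition of $E^{(j)}_{\nu}$ below \eqref{auxi function-a} telescopes and, on that interval,
\[
V^{\varepsilon}_{\nu}(t,x)=U^0+\varepsilon\sum_{i=1}^{m}\sigma^{(j_i)}_{\nu}\,\rr^0_{j_i}+\varepsilon^2E^{(j)}_{\nu}\big(x-\lambda^0_j(t-T_0)\big)=W^{(j)}_{\nu}\big(x-\lambda^0_j(t-T_0)\big).
\]

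Next I would check that $W^{(j)}_{\nu}\big(x-\lambda^0_j(t-T_0)\big)$ is an exact entropy solution of \eqref{ngo1}. All values of $W^{(j)}_{\nu}$ lie on the integral surface $S_j(U^0)$ spanned by $\rr_{j_1},\dots,\rr_{j_m}$ through $U^0$, and by Lemma \ref{lem:3.3a} this field is linearly degenerate, so $\rr_{j_i}\cdot\nabla\lambda\equiv0$ for all $i$; therefore $\nabla F(U)\,dU=\lambda(U)\,dU=\lambda^0_j\,dU$ along $S_j(U^0)$, which integrates to $F(W^{(j)}_{\nu+})-F(W^{(j)}_{\nu-})=\lambda^0_j\,(W^{(j)}_{\nu+}-W^{(j)}_{\nu-})$ at every jump. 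Thus each jump of $W^{(j)}_{\nu}$, propagating at the single speed $\lambda^0_j$, is a contact discontinuity satisfying Rankine--Hugoniot and (trivially) the admissibility criterion, and between jumps $W^{(j)}_{\nu}$ is constant; hence its rigid translate is a genuine entropy solution whose Riemann data about $x_0$ is the single pair $(W^{(j)}_{\nu-},W^{(j)}_{\nu+})$. Choosing $h_0$ so small that the cone of determinacy over the strip $t_0\le t\le t_0+h$, $|x-x_0|<\hat\lambda h$, stays inside the localization interval and meets only this one jump, consistency and uniqueness of the SRS identify $S_h(V^{\varepsilon}_{\nu}(t_0,\cdot))$ on $(x_0-\hat\lambda h,x_0+\hat\lambda h)$ with this exact solution, i.e.\ with $W^{(j)}_{\nu}\big(\cdot-\lambda^0_j(t_0+h-T_0)\big)=V^{\varepsilon}_{\nu}(t_0+h,\cdot)$; the integrand is then $\equiv0$.

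The point requiring the most care — the real content of the lemma — is the first two steps taken together: one must verify that the correction terms were chosen precisely so that $V^{\varepsilon}_{\nu}$ collapses \emph{exactly} (not just to leading order in $\varepsilon$) to a piecewise constant profile, and that this profile consists \emph{entirely} of exact contact discontinuities sharing the common speed $\lambda^0_j$. Both hinge on the constancy of $\lambda$ along the integral surface $S_j(U^0)$ guaranteed by linear degeneracy (Lemma \ref{lem:3.3a}) and on the disjointness of the supports $K_j(t)$ for $t\ge T_0$. Once $V^{\varepsilon}_{\nu}$ is recognized as an exact solution near $x_0$, the vanishing of the semigroup error — in contrast to the $O(\varepsilon^3)$ and $O(2^{-\nu}\varepsilon^2)$ bounds of Lemma \ref{prop:GN Riemann compact support} in the genuinely nonlinear case — is immediate.
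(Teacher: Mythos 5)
Your proof is correct and follows essentially the same route as the paper, which justifies this lemma only by the remark preceding it that all values of $W^{(j)}_{\nu}$ lie on the integral surface $S_j(U^0)$ and that the translating profile is ``exactly a Riemann solution'' moving at the single speed $\lambda_j^0$. Your write-up simply makes explicit the two ingredients the paper leaves implicit: the localization via the disjointness of the sets $K_j(t)$ for $t\ge T_0$ (so that $V^{\varepsilon}_{\nu}$ collapses exactly to $W^{(j)}_{\nu}(x-\lambda_j^0(t-T_0))$ near $x_0$), and the verification via linear degeneracy that every jump of $W^{(j)}_{\nu}$ is an admissible contact discontinuity satisfying the Rankine--Hugoniot condition with speed $\lambda_j^0$.
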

With Lemmas \ref{prop:GN Riemann compact support}
and \ref{prop:LD Riemann Compact support} in hand,
summing up the above estimates and using \eqref{semigroup estimate}, we have

\begin{lem}\label{prop:V}
Suppose that the assumptions in Theorem {\rm \ref{thm:compacted initial data}} hold. Then
$$
\|S(t-T_0)V^{\varepsilon}(T_0)-V^{\varepsilon}(t)\|_{L^1(\mathrm{R})}
\leq C\,\varepsilon^2 \qquad \mbox{when}\,\, t\geq T_0.
$$
\end{lem}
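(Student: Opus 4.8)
The plan is to run the standard error formula \eqref{semigroup estimate}, localized to each wave, on the more accurate auxiliary curve and then to pass to the limit $\nu\to\infty$. I would work first with the $\nu$-approximation. Fix $t\ge T_0$ and take $W(s):=V^{\varepsilon}_{\nu}(T_0+s,\cdot)$ for $s\in[0,t-T_0]$, together with the standard Riemann semigroup $S$ generated by \eqref{ngo1}. Since $TV(V^{\varepsilon}_{\nu}(\tau,\cdot))=O(\varepsilon)$ uniformly in $(\tau,\nu)$, one has $W(s)\in\mathcal{D}$ for $\varepsilon$ small, and $W$ is Lipschitz from $[0,t-T_0]$ into $L^1$: each profile $\sigma^{(j)}_{\nu}$ is $BV$ and Lipschitz-in-time in $L^1$ by \eqref{4.2c}, and the higher-order terms in \eqref{auxi function-a} inherit this regularity. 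The error formula \eqref{semigroup estimate} (applied after the time shift by $T_0$) then reduces the estimate to controlling, for a.e.\ $s$, the quantity $\liminf_{h\rightarrow 0+}h^{-1}\|S_hW(s)-W(s+h)\|_{L^1}$.

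To bound this quantity I would discard the finitely many values of $s$ at which $T_0+s$ is an interaction time of some shifted profile $\sigma^{(j)}_{\nu}(\varepsilon\,\cdot,\,\cdot-\lambda_j^0\,\cdot)$ (there the difference quotient is still $O(1)$ since $S_h$ and $W$ propagate at finite speed, and a null set is harmless), and for the remaining $s$ use that, since $t\ge T_0$, the supports $K_j(T_0+s)$ are pairwise disjoint. Hence near each of the finitely many jump points $x_0$ of $W(s)$ only one profile $\sigma^{(k)}_{\nu}$ jumps, and near $x_0$ the function $W(s)$ reduces to the genuinely nonlinear part of \eqref{auxi function-a} (the setting of Lemma \ref{lem:GN Riemann compact support}) when $k\in N$, and to a finite superposition of $k$-contact discontinuities moving at the common speed $\lambda_k^0$ — an exact solution of \eqref{ngo1}, since by linear degeneracy $\lambda$ is constant on the integral surface $S_k(U^0)$ — when $k\in L$. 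By finite speed of propagation the $\hat\lambda h$-neighbourhoods of distinct jumps are disjoint for $h$ small and $S_hW(s)=W(s+h)$ off them, so I would sum the local contributions: Lemma \ref{prop:LD Riemann Compact support} returns $0$ at each linearly degenerate jump, and Lemma \ref{prop:GN Riemann compact support} at each genuinely nonlinear jump gives, writing $\tau=\varepsilon(T_0+s)$ and using that for fixed $k$ the sum of the jump strengths of $\sigma^{(k)}_{\nu}(\tau,\cdot)$ is $TV(\sigma^{(k)}_{\nu}(\tau,\cdot))$ while $\max|\sigma^{(k)}_{\nu-}|\le\|\sigma^{(k)}_{\nu}(\tau,\cdot)\|_{L^\infty}$,
\begin{equation*}
\liminf_{h\rightarrow 0+}\frac{\|S_hW(s)-W(s+h)\|_{L^1}}{h}
\le C\varepsilon^3\sum_{k\in N}\|\sigma^{(k)}_{\nu}(\tau,\cdot)\|_{L^\infty}^2\,TV\big(\sigma^{(k)}_{\nu}(\tau,\cdot)\big)
+C2^{-\nu}\varepsilon^2\sum_{k\in N}TV\big(\sigma^{(k)}_{\nu}(\tau,\cdot)\big).
\end{equation*}

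Integrating in $s$ and rescaling by $\tau=\varepsilon(T_0+s)$, $\mbox{d}s=\varepsilon^{-1}\mbox{d}\tau$, the second term contributes at most $C2^{-\nu}\varepsilon^2(t-T_0)\,TV(U^1)$, and the first becomes $C\varepsilon^2\int_{\varepsilon T_0}^{\varepsilon t}\sum_{k\in N}\|\sigma^{(k)}_{\nu}(\tau,\cdot)\|_{L^\infty}^2\,TV(\sigma^{(k)}_{\nu}(\tau,\cdot))\,\mbox{d}\tau$. The step I expect to be the main obstacle is showing that this $\tau$-integral is bounded uniformly in both $t$ and $\nu$: this is where the compactness of $\mbox{supp}\,U^1$ enters a second time, through the large-time decay of solutions of the genuinely nonlinear scalar conservation law \eqref{scaler equation} with compactly supported $BV$ data, namely $\|\sigma^{(k)}_{\nu}(\tau,\cdot)\|_{L^\infty}+TV(\sigma^{(k)}_{\nu}(\tau,\cdot))\le C(1+\tau)^{-1/2}$ uniformly in $\nu$, which makes the integrand $O((1+\tau)^{-3/2})$ and hence integrable on $[0,\infty)$. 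Conceptually, the correction terms in \eqref{auxi function-a} were built precisely so that the local error rate is $O(\varepsilon^3)$ per unit $t$-time, and the rescaling $\tau=\varepsilon t$ together with the $\tau$-decay converts $\varepsilon^3(t-T_0)$ into a total of order $\varepsilon^2$. This yields $\|S_{t-T_0}V^{\varepsilon}_{\nu}(T_0)-V^{\varepsilon}_{\nu}(t)\|_{L^1}\le C\varepsilon^2+C2^{-\nu}\varepsilon^2(t-T_0)$.

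Finally I would let $\nu\to\infty$ with $t$ fixed. Since $\sigma^{(j)}_{\nu}\to\sigma^{(j)}$ in $L^1$ (Step~1 of \S\ref{sec:main thm}) and the $E^{(j)}_{\nu}$, built from the Riemann solver applied to the (converging) jumps of $\sigma^{(j)}_{\nu}(\varepsilon T_0,\cdot)$, also converge, we obtain $V^{\varepsilon}_{\nu}(T_0,\cdot)\to V^{\varepsilon}(T_0,\cdot)$ and $V^{\varepsilon}_{\nu}(t,\cdot)\to V^{\varepsilon}(t,\cdot)$ in $L^1$; the $L^1$-Lipschitz continuity of $S$ then passes the inequality to the limit, the $2^{-\nu}$ term drops out, and we conclude $\|S(t-T_0)V^{\varepsilon}(T_0)-V^{\varepsilon}(t)\|_{L^1}\le C\varepsilon^2$ for every $t\ge T_0$, which is the claim.
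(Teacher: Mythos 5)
Your proposal is correct and follows essentially the same route as the paper: the semigroup error formula combined with the local Riemann estimates of Lemmas \ref{prop:GN Riemann compact support} and \ref{prop:LD Riemann Compact support}, summation over the (disjoint-support) jumps, the $t^{-1/2}$ decay of $TV$ and $L^\infty$ norms for the genuinely nonlinear scalar profiles with compactly supported data to make the $\varepsilon^3$-rate integrable, and finally $\nu\to\infty$ to kill the $2^{-\nu}$ term. The only differences are bookkeeping ones — you integrate in the slow time $\tau$ with the bound $(1+\tau)^{-3/2}$, where the paper splits the $t$-integral at $\varepsilon^{-1}$; both yield the same uniform $O(\varepsilon^2)$ bound.
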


\begin{proof}
We first use the standard error formula \eqref{semigroup estimate} and then let $h$ small enough
such that $S(h)V_{\nu}^{\epsilon}$ is the solution obtained by piecing together the standard entropy
solutions of the Riemann problems determined by the jumps of $V_{\nu}^{\epsilon}$.
Then we can use Lemmas \ref{prop:GN Riemann compact support} and \ref{prop:LD Riemann Compact support}
to obtain
\begin{align*}
&\|S(t-T_0)V_{\nu}^{\varepsilon}(T_0)-V^{\varepsilon}_{\nu}(t)\|_{L^1}\\[2mm]
&\leq L\int_{T_0}^t\liminf_{h\rightarrow0+}
     \frac{\|S(h)V_{\nu}^{\varepsilon}(z)-V_{\nu}^{\varepsilon}(z+h)\|_{L^1}}{h}\mbox{d}z\\
&\leq L\int_{T_0}^t\liminf_{h\rightarrow0+}
     \frac{\sum_{\sigma}|S(h)V_{\nu}^{\varepsilon}(z)-V_{\nu}^{\varepsilon}(z+h)|}{h}\mbox{d}z\\
&\leq O(1)\left(\int_{T_0}^{\max\{T_0,\,\varepsilon^{-1}\}}+\int_{\max\{T_0,\,\varepsilon^{-1}\}}^t\right)
 \Big(\varepsilon^3\sum_{k\in N}\big(TV(\sigma^{(k)}_\nu(\varepsilon z))\big)^3
   +\varepsilon^22^{-\nu}\sum_{k\in N}TV(\sigma^{(k)}_\nu)\Big)\mbox{d}z,
\end{align*}
where $\sum_\sigma$ is the sum of all the jumps at $t=z+h$ with jump strength $\sigma$.
Then, passing the limit $\nu\to\infty$, we have
\begin{align*}
\|S(t-T_0)V^{\varepsilon}(T_0)-V^{\varepsilon}(t)\|_{L^1}
\leq C\,\left(\varepsilon^2+\varepsilon^{3/2}
\int_{\varepsilon^{-1}}^t\frac{\mbox{d}z}{z^{3/2}}\right)\le C \varepsilon^2,
\end{align*}
where we have used the fact that, for genuinely nonlinear scalar conservation laws,
if the initial data $u_0$ has compact support and
satisfies $\|u_0\|_{\infty}\leq M$, then the solution $u(t,x)$ satisfies
$$
TV \big(u(t,\cdot)\big)\leq C\,t^{-\frac{1}{2}},
$$
where the constant $C$ depends only on $u_0$.
This completes the proof.
\end{proof}

Since the estimates for the auxiliary function have been established,
we are now at the stage to consider the estimates for the geometric optic expansion
function $U_{w}^{\varepsilon}$.

\begin{lem}\label{prop:CS}
Suppose that the assumptions in Theorem {\rm \ref{thm:compacted initial data}} hold.
Then
$$
\|S(t-T_0)U^{\varepsilon}_w(T_0)-U^{\varepsilon}_w(t)\|_{L^1(\mathrm{R})}\leq C\,\varepsilon^2
\quad \mbox{when}\,\, t\geq T_0.
$$
\end{lem}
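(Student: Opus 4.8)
The plan is to deduce Lemma~\ref{prop:CS} from Lemma~\ref{prop:V} by inserting the refined auxiliary function $V^{\varepsilon}$ (the $\nu\to\infty$ limit of $V^{\varepsilon}_{\nu}$ in \eqref{auxi function-a}) as an intermediary and showing that $V^{\varepsilon}$ and the geometric optics expansion $U^{\varepsilon}_w$ stay $O(\varepsilon^2)$-close in $L^1$ for $t\ge T_0$, \emph{uniformly in $t$}. Using the triangle inequality and the Lipschitz continuity of $S$ (Lemma~\ref{prop:stability}, applied with the two evolution times both equal to $t-T_0$, so the $L_2|t-s|$ term drops out), for $\varepsilon$ small enough we obtain
\begin{align*}
\|S(t-T_0)U^{\varepsilon}_w(T_0)-U^{\varepsilon}_w(t)\|_{L^1}
&\le L_1\,\|U^{\varepsilon}_w(T_0)-V^{\varepsilon}(T_0)\|_{L^1}
 +\|S(t-T_0)V^{\varepsilon}(T_0)-V^{\varepsilon}(t)\|_{L^1}\\
&\quad +\|V^{\varepsilon}(t)-U^{\varepsilon}_w(t)\|_{L^1}.
\end{align*}
The middle term is $\le C\varepsilon^2$ directly by Lemma~\ref{prop:V}, so everything reduces to proving
$$
\sup_{t\ge T_0}\|V^{\varepsilon}(t)-U^{\varepsilon}_w(t)\|_{L^1}\le C\,\varepsilon^2 .
$$

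Since the zeroth- and first-order (in $\varepsilon$) parts of $V^{\varepsilon}$ and $U^{\varepsilon}_w$ coincide, we have the exact identity
$$
V^{\varepsilon}(t,x)-U^{\varepsilon}_w(t,x)
=\frac{\varepsilon^2}{2}\sum_{j\in N}\big(\sigma^{(j)}(\varepsilon t,x-\lambda_j^0t)\big)^2(\rr_j^0\cdot\nabla)\rr_j^0
 +\varepsilon^2\sum_{j_i\in L}E^{(j)}\big(x-\lambda_j^0(t-T_0)\big),
$$
and it suffices to bound the $L^1$-norm of each sum by a constant independent of $\varepsilon$ and $t$. For a genuinely nonlinear family $j\in N$, the vector $(\rr_j^0\cdot\nabla)\rr_j^0$ is fixed, and since $\sigma^{(j)}$ solves the inviscid Burgers equation \eqref{scaler equation} with the compactly supported $BV$ datum $\sigma^{(j)}(0,y)=\ll_j(U^0)\cdot U^1(y)$, the $L^1$-contraction and maximum principle for scalar conservation laws give, for every $\tau=\varepsilon t\ge 0$,
$$
\big\|\big(\sigma^{(j)}(\varepsilon t,\cdot)\big)^2\big\|_{L^1}
\le \|\sigma^{(j)}(\varepsilon t,\cdot)\|_{L^\infty}\,\|\sigma^{(j)}(\varepsilon t,\cdot)\|_{L^1}
\le \|\sigma^{(j)}(0,\cdot)\|_{L^\infty}\,\|\sigma^{(j)}(0,\cdot)\|_{L^1},
$$
a finite constant. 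For a linearly degenerate family $j$ (with constituents $\{j_i\}\subset L$), the function $E^{(j)}_{\nu}$ measures the deviation of $W^{(j)}_{\nu}$ from its first-order expansion; because $W^{(j)}_{\nu}$ is an exact Riemann solution confined to the integral surface $S_j(U^0)$ through $U^0$, this deviation is $O(\varepsilon^2)$, i.e. $|E^{(j)}_{\nu}|\le C$ uniformly in $\nu$ (the bound announced after \eqref{auxi function-a}); moreover $E^{(j)}_{\nu}$ is supported in the \emph{fixed} compact interval $[a,b]=\mbox{supp}_x\,\sigma^{(j)}(\varepsilon T_0,\cdot-\lambda_j^0T_0)$, whence $\|E^{(j)}_{\nu}\|_{L^1}\le C\,|b-a|$; letting $\nu\to\infty$ yields $\|E^{(j)}\|_{L^1}\le C$, and the translation by $\lambda_j^0(t-T_0)$ preserves both the $L^1$-norm and the support length. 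Summing over the finitely many families then gives $\|V^{\varepsilon}(t)-U^{\varepsilon}_w(t)\|_{L^1}\le C\,\varepsilon^2$ for all $t\ge T_0$; applying this at time $t$ and at time $T_0$ in the displayed inequality above finishes the proof.

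The step carrying all the weight is the uniform bound $|E^{(j)}_{\nu}|\le C$, together with the fact that its support neither grows nor meets the other families. This is precisely where the disjointness $K_i(t)\cap K_k(t)=\phi$ for $t\ge T_0$ and the choice of $W^{(j)}_{\nu}$ as an exact Riemann solution lying on $S_j(U^0)$ are essential: they force the first-order mismatch between $W^{(j)}_{\nu}$ and the first-order geometric optics term to cancel, so that $E^{(j)}_{\nu}$ is $O(1)$ rather than $O(\varepsilon^{-1})$. Once this is established, the remaining ingredients — the triangle inequality, the semigroup Lipschitz estimate, Lemma~\ref{prop:V}, and the monotone $L^1$- and $L^{\infty}$-bounds for the scalar Burgers profiles — are routine.
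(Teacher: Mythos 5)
Your proposal is correct and follows essentially the same route as the paper: the same triangle-inequality decomposition through the auxiliary function $V^{\varepsilon}_{\nu}$ combined with the Lipschitz continuity of the semigroup and Lemma~\ref{prop:V}, the same $\|\sigma^{(j)}\|_{L^{\infty}}\|\sigma^{(j)}\|_{L^1}$ bound for the genuinely nonlinear correction term, and the same uniform pointwise bound on $E^{(j)}_{\nu}$ over its fixed compact support for the linearly degenerate part. The one caveat is that the bound $|E^{(j)}_{\nu}|\le C$ you cite as ``announced after \eqref{auxi function-a}'' is in fact only \emph{announced} there and is actually proved by the paper inside this very lemma --- by summing $|\beta_k(\sigma_i)|=O(\varepsilon^2|\sigma_i|)$ for $k\neq j_i$ over the jump points of $J_j$ via Lemma~\ref{lem:linearly degerate} --- so you should carry out that computation rather than defer to the announcement; your sketch of the first-order cancellation (that $W^{(j)}_{\nu}$ is an exact Riemann solution on the integral surface through $U^0$, so only the $O(\varepsilon^2)$ off-family components survive) is precisely the right mechanism for it.
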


\begin{proof}
This is in fact a simple corollary of Lemma \ref{prop:V}. Notice that
\begin{align*}
U^{\varepsilon}_{w,\nu}(t)=&V^{\varepsilon}_{\nu}(t)
-\frac{1}{2}\varepsilon^2\sum_{j\in N}\big(\sigma^{(j)}(\varepsilon t, x-\lambda_j^0t)\big)^2 (\rr_j^0\cdot\nabla) \rr_j^0\\
&-\varepsilon^2\sum_{j_i\in L}E_{\nu}^{(j)}(x-\lambda_j^0(t-T_0)).
\end{align*}
From Lemma \ref{lem:linearly degerate} and the fact that the discontinuities
of $W^{(j)}_{\nu}$ and $U^0+\varepsilon\sum_{i=1}^m\sigma^{(j_i)}_{\nu}\rr_{j_i}^0$
share the same speed $\lambda_j^0$, we notice that, for any $x\in K_j(T_0)$,
$x-\lambda_j^0(t-T_0)\in K_j(t)$ and
\begin{eqnarray*}
&&\sigma_{\nu}^{(j_i)}(\varepsilon t,x-\lambda_j^0T_0-\lambda_j^0(t-T_0))
=\sigma_{\nu}^{(j_i)}(\varepsilon T_0,x-\lambda_j^0T_0),\\[2mm]
&& W^{(j)}_{\nu}(x-\lambda_j^0(t-T_0))=W^{(j)}_{\nu}(x).
\end{eqnarray*}
Thus, we have
\begin{align*}
&|E^{(j)}_{\nu}(x-\lambda_j^0(t-T_0))|\\
&=\frac{1}{\varepsilon^2}|U^0+\varepsilon\sum_{i=1}^m\sigma_{\nu}^{(j_i)}(\varepsilon T_0,x-\lambda_j^0T_0)\rr_{j_i}^0-W^{(j)}_{\nu}(x)|\\
&\leq \frac{1}{\varepsilon^2}\sum_{\sigma_i\in J_j}|\Phi(0,\cdot\cdot\cdot,0,\beta_{j_1}(\sigma_i),\cdot\cdot\cdot,\beta_{j_m}(\sigma_i),0,\cdot\cdot\cdot,0)
     -\Phi(\beta_1(\sigma_i),\cdot\cdot\cdot,\beta_n(\sigma_i))|\\
&\leq \frac{1}{\varepsilon^2}\sum_{\sigma_i\in J_j,\,k\neq j_i}C\,|\beta_k(\sigma_i)|\\
&\leq\sum_{\sigma_i\in J_j}C\,|\sigma_i|
\leq C,
\end{align*}
where $\sigma_i\in J_j$ means that there is a jump point of $\sigma^{(j_i)}_{\nu}$ at which the strength is $\sigma_i$.

Thus,  by Lemma \ref{prop:stability},
\begin{align*}
    &\|S(t-T_0)U^{\varepsilon}_{w,\nu}(T_0)-U^{\varepsilon}_{w,\nu}(t)\|_{L^1}\\[2mm]
&\leq \|S(t-T_0)U^{\varepsilon}_{w,\nu}(T_0)-S(t-T_0)V_{\nu}^{\varepsilon}(T_0)\|_{L^1}
     +\|S(t-T_0)V_{\nu}^{\varepsilon}(T_0)-V^{\varepsilon}_{\nu}(t)\|_{L^1}\\ &\quad +\|V^{\varepsilon}_{\nu}(t)-U^{\varepsilon}_{w,\nu}(t)\|_{L^1}\\[2mm]
&\leq L \|V^{\varepsilon}_{\nu}(T_0)-U^{\varepsilon}_{w,\nu}(T_0)\|_{L^1(\mathrm{R})}
      +\|S(t-T_0)V_{\nu}^{\varepsilon}(T_0)-V^{\varepsilon}_{\nu}(t)\|_{L^1}
     +\|V^{\varepsilon}_{\nu}(t)-U^{\varepsilon}_{w,\nu}(t)\|_{L^1}\\[2mm]
&\leq \|S(t-T_0)V_{\nu}^{\varepsilon}(T_0)-V^{\varepsilon}_{\nu}(t)\|_{L^1}
      + C\,\varepsilon^2\sum_{j\in N}\|\sigma^{(j)}_{\nu}(\varepsilon T_0, \cdot-\lambda_j^0T_0)\|_{L^\infty}
      \|\sigma^{(j)}_{\nu}(\varepsilon T_0, \cdot-\lambda_j^0T_0)\|_{L^1}
        \\
     &\quad +C\,\varepsilon^2\sum_{j\in N}\|\sigma^{(j)}_{\nu}(\varepsilon t, \cdot-\lambda_j^0t)\|_{L^\infty}
      \|\sigma^{(j)}_{\nu}(\varepsilon t, \cdot-\lambda_j^0t)\|_{L^1}
      +C\,\varepsilon^2\sum_{j\in L}|\mbox{supp}_x\sigma_{\nu}^{(j)}|\\
&\leq \|S(t-T_0)V_{\nu}^{\varepsilon}(T_0)-V^{\varepsilon}_{\nu}(t)\|_{L^1} + C\varepsilon^2.
\end{align*}
Then, passing the limit $\nu\rightarrow\infty$ and using Lemma \ref{prop:V}, we have
$$
\|S(t-T_0)U^{\varepsilon}_w(T_0)-U^{\varepsilon}_w(t)\|_{L^1}\leq C\,\varepsilon^2,
$$
which completes the proof.
\end{proof}

With Lemmas \ref{lem:GN Riemann compact support}--\ref{prop:CS},
we can now complete the proof of Theorem \ref{thm:compacted initial data}.

\begin{proof}[$\mathbf{Proof~of~Theorem~\ref{thm:compacted initial data}}$] Notice that
\begin{align*}
    &\|S(t)U^{\varepsilon}_w(0)-U^{\varepsilon}_w(t)\|_{L^1}\\[2mm]
&\leq \|S(t)U^{\varepsilon}_w(0)-S(t-T_0)U^{\varepsilon}_w(T_0)\|_{L^1}
+\|S(t-T_0)U^{\varepsilon}_w(T_0)-U^{\varepsilon}_w(t)\|_{L^1}\\[2mm]
&=  I_1+I_2.
\end{align*}
Using \eqref{coreissue} in \S \ref{sec:main thm}, we have
\begin{align*}
I_1&\leq \|S(t-T_0)S(T_0)U^{\varepsilon}_w(0)-S(t-T_0)U^{\varepsilon}_w(T_0)\|_{L^1}\\[2mm]
   &\leq L\|S(T_0)U^{\varepsilon}_w(0)-U^{\varepsilon}_w(T_0)\|_{L^1}\\[2mm]
   &\leq C\,T_0\varepsilon^2.
\end{align*}
By Lemma \ref{prop:CS},
$$
I_2\leq C\, \varepsilon^2.
$$
Therefore, we conclude
$$
\|U^{\varepsilon}(t,\cdot)-U^{\varepsilon}_w(t,\cdot)\|_{L^1}\le C\, \varepsilon^2.
$$
This completes the proof.
\end{proof}

\section{Proof of Theorem \ref{thm:noncompact case}.}\label{sec:noncompacted case}

Following the approach developed in \S 3--\S 5, we can extend the result even
for $BV\cap L^1$ initial data, Theorem \ref{thm:noncompact case}.

We first define the error terms $E_\nu(t,x)$ for each time $t$.
For any jump point $x$ of $\sigma^{(k)}_{\nu}(\varepsilon t, x-\lambda_k^0t)$,
the jump of $E_{\nu}$ at $x$ is
\begin{eqnarray}
E_{\nu}(t,x+)-E_{\nu}(t,x-)&=&\sum_{j\neq k}\sigma_{\nu}^{(j)}(\varepsilon t,x-\lambda_j^0t)(\sigma_{\nu+}^{(k)}-\sigma_{\nu-}^{(k)})(\rr_j^0\cdot\nabla)\rr_k^0\nonumber\\
&& +\frac{1}{2}\big((\sigma^{(k)}_{\nu+})^2-(\sigma^{(k)}_{\nu-})^2\big)(\rr_k^0\cdot\nabla)\rr_k^0.
\label{6.1a}
\end{eqnarray}

From the properties of $\sigma^{(j)}(\varepsilon t,x-\lambda_j^0 t)$,
we know that $E_{\nu}(t,\cdot)\in L^1(\mathbb{R};\mathbb{R}^n)\cap BV(\mathbb{R};\mathbb{R}^n)$
and, near the jump point $(t,x)$, the speed of discontinuity of $E_{\nu}(t,x)$ is the same as the
one of $\sigma_{\nu}^{(k)}(\varepsilon t, x-\lambda_k^0t)$.
In order to use the standard Riemann semigroup,
we need to show that the error terms $E_{\nu}(t,\cdot)$ are Lipschitz-continuous with respect to $t$.
Indeed, we have
\begin{align*}
&\|E_{\nu}(t,\cdot)-E_{\nu}(s,\cdot)\|_{L^1}\\
&=O(1)||\sigma^{(j)}_{\nu}||_{\infty}(t-s)\cdot[\mbox{maximum speed}]
\cdot[\mbox{total strength of all wave-fronts of }\sigma_{\nu}^{j}(\varepsilon s, \cdot)]\\
&\leq C(t-s)
\end{align*}
for some constant $C$ independent on $\nu$.
Then, as before, we modify expansion \eqref{og expansion}
by the following corresponding approximation:
\begin{eqnarray}
V^{\varepsilon}_{\nu}=U^0+\varepsilon\sum_{j}\sigma^{(j)}_{\nu}(\varepsilon t, x-\lambda_j^0t)\rr^0_j
+E_{\nu}(t,x) \varepsilon^2.
\end{eqnarray}

Then, as the bootstraps in \S \ref{sec:compact support}, we first have the following lemma.

\begin{lem}\label{lem:Riemann noncompact}
Suppose that the assumptions in Theorem {\rm \ref{thm:noncompact case}} hold and, at point $x$,
$\sigma^{(k)}(\varepsilon t,x-\lambda t)$ has a jump. Let
$$
V_{\nu\pm}^{\varepsilon}:=U^0+\varepsilon\sigma^{(k)}_{\nu\pm}\rr^0_{k} +E_{\nu\pm}(t,x) \varepsilon^2
$$
with $\Phi(V_{\nu-}^{\varepsilon};\beta_1,\cdot\,\cdot\,\cdot,\beta_n)=V_{\nu+}^{\varepsilon}$. Then
\begin{equation}
\beta_j=\sigma\varepsilon\delta_{kj}
+O(1)\epsilon^3\sum_{i}\big((\sigma_{\nu-}^{(i)})^2\sigma+\sigma_{\nu-}^{(i)}\sigma^2+\sigma^3\big),
\end{equation}
where $\sigma=\sigma_{\nu+}^{(k)}-\sigma_{\nu-}^{(k)}$, and $\delta_{kj}$ is the Kronecker delta.
\end{lem}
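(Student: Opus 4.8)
The plan is to run the proof of Lemma~\ref{lem:GN Riemann compact support} essentially verbatim, the only new feature being that the quadratic correction now also carries the cross-family interaction terms recorded in \eqref{6.1a}. Set $\theta=\varepsilon\sigma$ with $\sigma=\sigma^{(k)}_{\nu+}-\sigma^{(k)}_{\nu-}$. Near the jump point only the $k$-th profile and the error $E_\nu$ are discontinuous, so $V^{\varepsilon}_{\nu+}-V^{\varepsilon}_{\nu-}=\varepsilon\sigma\,\rr_k^0+\varepsilon^2\big(E_\nu(t,x+)-E_\nu(t,x-)\big)$; substituting \eqref{6.1a}, using $(\sigma^{(k)}_{\nu+})^2-(\sigma^{(k)}_{\nu-})^2=(2\sigma^{(k)}_{\nu-}+\sigma)\sigma$, and recalling $\varepsilon\theta=\varepsilon^2\sigma$, $\theta^2=\varepsilon^2\sigma^2$, one obtains the Riemann relation
$$
\Phi(V^{\varepsilon}_{\nu-};\beta_1,\dots,\beta_n)=V^{\varepsilon}_{\nu-}+\theta\,\rr_k^0+\Big(\frac{\theta^2}{2}+\varepsilon\theta\,\sigma^{(k)}_{\nu-}\Big)(\rr_k^0\cdot\nabla)\rr_k^0+\varepsilon\theta\sum_{j\neq k}\sigma^{(j)}_{\nu}\,(\rr_j^0\cdot\nabla)\rr_k^0 ,
$$
where for $j\neq k$ the coefficient $\sigma^{(j)}_{\nu}$ is the common one-sided value at the jump. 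This is exactly \eqref{equ:star} with the extra last sum, and the implicit function theorem furnishes a unique $(\beta_1,\dots,\beta_n)(\theta)$ with $\beta_j(0)=0$.

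Next I would differentiate this identity once and twice in $\theta$ at $\theta=0$, as in \eqref{5.6a}--\eqref{5.4a}. The first derivative gives $\sum_j \rr_j(V^{\varepsilon}_{\nu-})\,\partial_\theta\beta_j|_{\theta=0}=\rr_k^0+\varepsilon\sum_j\sigma^{(j)}_{\nu-}(\rr_j^0\cdot\nabla)\rr_k^0$, and the right-hand side is precisely the first-order Taylor polynomial of $\rr_k$ along $U^0+\varepsilon\sum_j\sigma^{(j)}_{\nu-}\rr_j^0$; it therefore agrees with $\rr_k(V^{\varepsilon}_{\nu-})$ up to an error controlled by the squares of the local wave strengths, so pairing with $\ll_j(V^{\varepsilon}_{\nu-})$ yields $\partial_\theta\beta_j|_{\theta=0}=\delta_{kj}+O(1)\varepsilon^2\sum_i(\sigma^{(i)}_{\nu-})^2$. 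The second derivative, with the quadratic part of the right-hand side matched against $(\rr_k(V^{\varepsilon}_{\nu-})\cdot\nabla)\rr_k(V^{\varepsilon}_{\nu-})$, gives $\partial_\theta^2\beta_j|_{\theta=0}=O(1)\varepsilon\sum_i|\sigma^{(i)}_{\nu-}|$. Inserting these into the third-order Taylor expansion $\beta_j(\theta)=\partial_\theta\beta_j|_0\,\theta+\partial_\theta^2\beta_j|_0\,\theta^2+O(1)\theta^3$ and substituting $\theta=\varepsilon\sigma$ produces exactly
$$
\beta_j=\sigma\varepsilon\,\delta_{kj}+O(1)\,\varepsilon^3\sum_i\big((\sigma^{(i)}_{\nu-})^2\sigma+\sigma^{(i)}_{\nu-}\sigma^2+\sigma^3\big).
$$

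I expect the main obstacle to be exactly the bookkeeping that makes the previous paragraph close. In \S\ref{sec:compact support} the disjointness $K_i\cap K_j=\phi$ allowed one to drop all cross terms $\sigma^{(i)}\sigma^{(j)}$; here the supports overlap, so one must verify that the extra $\sum_{j\neq k}\sigma^{(j)}_\nu(\rr_j^0\cdot\nabla)\rr_k^0$ contribution built into $E_\nu$ through \eqref{6.1a} is exactly the correction needed to upgrade the first Taylor matching from $O(\varepsilon)$ to second order. This in particular forces $V^{\varepsilon}_{\nu\pm}$ to be read as the full local value $U^0+\varepsilon\sum_j\sigma^{(j)}_{\nu}\rr_j^0+E_\nu\varepsilon^2$ of the approximate profile, with the $j\neq k$ families frozen at their locally constant values, and it requires controlling $E_\nu$ --- which is defined only through its jumps --- pointwise near the jump by the wave strengths present there. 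A secondary but essential point, needed already here and relied upon in Lemmas~\ref{prop:GN Riemann compact support}--\ref{prop:CS}, is that all the $O(1)$ constants, together with the bound $|E_\nu|\le C$, be taken uniform in $\nu$, in $t$, and in the jump point; this follows from the uniform $BV\cap L^1$ bounds on the $\sigma^{(j)}_\nu$ and the construction of $E_\nu$ via \eqref{6.1a}.
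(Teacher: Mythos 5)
Your proposal is correct and follows essentially the same route as the paper: the same Riemann relation $\Phi(V^{\varepsilon}_{\nu-};\beta)=V^{\varepsilon}_{\nu-}+\theta\,\rr_k^0+\bigl(\tfrac{\theta^2}{2}+\varepsilon\theta\sigma^{(k)}_{\nu-}\bigr)(\rr_k^0\cdot\nabla)\rr_k^0+\varepsilon\theta\sum_{j\neq k}\sigma^{(j)}_{\nu}(\rr_j^0\cdot\nabla)\rr_k^0$ obtained from \eqref{6.1a}, the same first- and second-order differentiation in $\theta$ yielding $\partial_\theta\beta_j|_0=\delta_{kj}+O(1)\varepsilon^2\sum_i(\sigma^{(i)}_{\nu-})^2$ and $\partial_\theta^2\beta_j|_0=O(1)\varepsilon\sum_i\sigma^{(i)}_{\nu-}$, and the same Taylor expansion to conclude. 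Your observation that $V^{\varepsilon}_{\nu\pm}$ must be read as the full local profile $U^0+\varepsilon\sum_j\sigma^{(j)}_{\nu}\rr_j^0+E_\nu\varepsilon^2$ (so that the right-hand side matches $\rr_k(V^{\varepsilon}_{\nu-})$ to second order) is exactly the reading the paper's proof implicitly uses.
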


\begin{proof}
Let $\theta=\varepsilon\sigma$. Then
$$
V^{\varepsilon}_{\nu+}-V^{\varepsilon}_{\nu-}=\theta \rr^0_{k}+\frac{\theta^2}{2}(\rr_k^0\cdot\nabla)\rr_k^0
+\varepsilon\theta\sum_{j}\sigma_{\nu-}^{(j)}(\rr_j^0\cdot\nabla)\rr_k^0.
$$
Thus, we have the following equation:
\begin{equation}\label{equ:star1}
\Phi(V^{\varepsilon}_{\nu-};\,\beta_1,\,\cdot\,\cdot\,\cdot,\,\beta_n)
=V^{\varepsilon}_{\nu-}+\theta \rr_k^0+ \frac{\theta^2}{2}(\rr_k^0\cdot\nabla)\rr_k^0+\varepsilon\theta\sum_{j}\sigma_{\nu-}^{(j)}(\rr_j^0\cdot\nabla)\rr_k^0.
\end{equation}
Clearly, by the implicit function theorem, there exists a unique solution $(\beta_1, \cdots, \beta_n)(\theta)$
of \eqref{equ:star1} such that
$\beta_j(0)=0, j=1, \cdots, n$.

Differentiating \eqref{equ:star1} with respect to $\theta$ and letting $\theta=0$, we have
\begin{eqnarray}
&&\sum_{j}\frac{\partial\Phi(V^{\varepsilon}_{\nu-};\,\beta_1,\,\cdot\,\cdot\,\cdot,\,\beta_n)}{\partial\beta_j} \frac{\partial\beta_j}{\partial\theta}\Big|_{\theta=0} \nonumber\\
&&=\rr_k^0+\sum_j\varepsilon\sigma_{\nu-}^{(j)}(\rr^{0}_j\cdot\nabla)\rr_k^0
+\theta (\rr_k^0\cdot\nabla)\rr_k^0\big|_{\theta=0}.
\label{6.5a}
\end{eqnarray}
Notice that
$\rr_k^0+\varepsilon\sum_j\sigma_{\nu-}^{(j)}(\rr^{0}_j\cdot\nabla)\rr_k^0
=\rr_j(V_{\nu-}^{\varepsilon}) +O(1) \varepsilon \sum_j|\sigma_{\nu-}^{(j)}|$.
Then we obtain from \eqref{6.5a} that
\begin{align*}
\sum_{j}\frac{\partial\Phi}{\partial\beta_j} \frac{\partial\beta_j}{\partial\theta}\Big|_{\theta=0} =\sum_j\rr_j(V_{\nu-}^{\varepsilon})\frac{\partial\beta_j}{\partial\theta}\big|_{\theta=0}
=\rr_k(V_-^{\varepsilon})+O(1)\varepsilon^2\sum_j(\sigma_{\nu-}^{(j)})^2,
\end{align*}
which yields
$$
\frac{\partial\beta_j}{\partial\theta}\Big|_{\theta=0}=\delta_{kj}+O(1)\varepsilon^2\sum_{i}(\sigma_{\nu-}^{(i)})^2.
$$
Next, takeing the twice derivatives on \eqref{equ:star1} with respect to $\theta$ and letting $\theta=0$ yield
$$
\sum_{i,j}\frac{\partial^2\Phi}{\partial\beta_i\partial\beta_j} \frac{\partial\beta_i}{\partial\theta}\frac{\partial\beta_j}{\partial\theta}\Big|_{\theta=0} +\sum_j\frac{\partial\Phi}{\partial\beta_j}\frac{\partial^2\beta}{\partial\theta^2}\Big|_{\theta=0}
=(\rr_k\cdot\nabla)\rr_k^0.
$$
Thus, we have
$$
(\rr_k\cdot\nabla) \rr_k(V_{\nu-}^{\varepsilon})+O(1)\varepsilon^2\sum_i(\sigma_{\nu-}^{(i)})^2 +\sum_j\rr_j(V_{\nu-}^{\varepsilon})\frac{\partial^2\beta_j}{\partial\theta^2}\Big|_{\theta=0}
=(\rr_k\cdot\nabla)\rr_k^0.
$$
Noticing that $(\rr_k(V_{\nu-}^{\varepsilon})\cdot\nabla)\rr_k(V_{\nu-}^{\varepsilon})
=(\rr_k^0\cdot\nabla)\rr_k^0+O(1)\varepsilon\sigma_{\nu-}^{(k)}$, we have
$$
\frac{\partial^2\beta_j}{\partial\theta^2}\Big|_{\theta=0}=O(1)\varepsilon\sum_i\sigma_{\nu-}^{(i)}.
$$
With all of these formulas above, by the Taylor expansion, we finally have
\begin{align*}
\beta_j(\theta)
=&\beta_j(0)+\frac{\partial\beta_j}{\partial\theta}\big|_{\theta=0}\theta +\frac{\partial^2\beta_j}{\partial\theta^2}\big|_{\theta=0}\theta^2+O(1)\theta^3\\[2mm]
=&\sigma\varepsilon\delta_{kj}+O(1)\varepsilon^3\sum_i\big((\sigma_{\nu-}^{i})^2\sigma+\sigma_{\nu-}^{(i)}\sigma^2+\sigma^3\big).
\end{align*}
It completes the proof of this lemma.
\end{proof}

With this better estimate, as in \S \ref{sec:compact support}, we have

\begin{lem}\label{prop:Riemann noncompact}
Suppose that the assumptions in Theorem  {\rm \ref{thm:noncompact case}} hold, and
the jump strength of $\sigma^{(k)}_{\nu}$ is $\sigma_k$.
Then there exists $h_0>0$ such that, when $0< h\le h_0$, we have
\begin{eqnarray*}
&& \int_{x_0-\hat{\lambda}h<x<x_0+\hat{\lambda}h} |S_h(V_{\nu}^{\varepsilon}(t_0,x))-V^{\varepsilon}_{\nu}(t_0+h,x)|\mbox{d}x\\[2mm]
&&\leq C\sum_{\lambda_j^0\neq\lambda_k^0} |\sigma_{\nu-}^{(j)}||\sigma_k|h \varepsilon^2
 + C\big(\sum_j|\sigma_{\nu-}^{(j)}|\big)^2|\sigma_k|h \varepsilon^3
 + C 2^{-\nu}|\sigma_k|h \varepsilon^2.
\end{eqnarray*}
\end{lem}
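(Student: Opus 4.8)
The strategy is to run the same bootstrap that produced Proposition~\ref{lem:local L1 estimates for genius nonlinear} and Lemma~\ref{prop:GN Riemann compact support}, now fed by the sharpened Riemann--parameter expansion of Lemma~\ref{lem:Riemann noncompact}; the only genuinely new feature is that the supports $K_j(t)$ of distinct families are no longer disjoint, so the cross--family terms $\varepsilon\sum_{\lambda_j^0\ne\lambda_k^0}\sigma^{(j)}_{\nu-}(\rr_j^0\cdot\nabla)\lambda_k^0$ no longer vanish and must be carried explicitly --- they are precisely what produces the first term $C\sum_{\lambda_j^0\ne\lambda_k^0}|\sigma^{(j)}_{\nu-}||\sigma_k|h\varepsilon^2$. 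First I would solve the Riemann problem for \eqref{ngo1} with the left/right traces $V^{\varepsilon}_{\nu\mp}$ of $V^{\varepsilon}_{\nu}(t_0,\cdot)$ at $x_0$: by Lemma~\ref{lem:Riemann noncompact} one has $\beta_k=\sigma_k\varepsilon+O(1)\varepsilon^3\sum_i\big((\sigma^{(i)}_{\nu-})^2|\sigma_k|+|\sigma^{(i)}_{\nu-}|\sigma_k^2+|\sigma_k|^3\big)$ and every other $\beta_j$ is of the same cubic order, so all intermediate Riemann states obey $|V_{k-1}-V^{\varepsilon}_{\nu-}|+|V_k-V^{\varepsilon}_{\nu+}|=O(1)\varepsilon^3(\cdots)$ while $|V^{\varepsilon}_{\nu+}-V^{\varepsilon}_{\nu-}|=O(1)\varepsilon|\sigma_k|$, where $(\cdots)$ denotes the cubic bracket above.

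Next I would compute the propagation speed of the $k$--wave. If $\sigma_k<0$ then $\beta_k<0$ for $\varepsilon$ small, so the $k$--wave is a shock of speed $S_k=\lambda_k(V_{k-1})+\tfrac12\beta_k+O(1)|\beta_k|^2$ (Rankine--Hugoniot and Lemma~\ref{prop:Riemann systems}); expanding $\lambda_k(V_{k-1})=\lambda_k(V^{\varepsilon}_{\nu-})+O(1)\varepsilon^3(\cdots)$ and then $\lambda_k(V^{\varepsilon}_{\nu-})$ about $U^0$, using $(\rr_k^0\cdot\nabla)\lambda_k^0=1$ (the $k$--field being genuinely nonlinear) and $(\rr_j^0\cdot\nabla)\lambda_k^0=0$ whenever $\lambda_j^0=\lambda_k^0$ (the dichotomy \eqref{ngo3} together with Lemma~\ref{lem:3.3a}), and comparing with the Burgers shock speed $S_{Bk}=\lambda_k^0+\sigma^{(k)}_{\nu-}\varepsilon+\tfrac12\sigma_k\varepsilon$ from Lemma~\ref{lem:burger's riemann problem}, the $\varepsilon\sigma^{(k)}_{\nu-}$ and $\tfrac12\sigma_k\varepsilon$ contributions cancel and one is left with
$$
S_k-S_{Bk}=\varepsilon\!\!\sum_{\lambda_j^0\ne\lambda_k^0}\!\!\sigma^{(j)}_{\nu-}(\rr_j^0\cdot\nabla)\lambda_k^0+O(1)\varepsilon^2\Big(\sum_j|\sigma^{(j)}_{\nu-}|\Big)^2+O(1)\varepsilon^3(\cdots).
$$
The role of the correction $\varepsilon^2E_\nu$ in $V^{\varepsilon}_{\nu}$ is exactly to absorb the quadratic self-- and cross--interactions in the \emph{state} (the $(\rr\cdot\nabla)\rr$ pieces of \eqref{6.1a}), so that only this speed discrepancy survives. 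If instead $\sigma_k>0$ the $k$--wave is a rarefaction; then $\sigma_k\le C2^{-\nu}$ by the construction of \S\ref{sec:scalar scheme} (Lemma~\ref{lem:burger's riemann problem}), and the edges $\lambda_k(V_{k-1}),\lambda_k(V_k)$ of the fan differ from the Burgers rarefaction speed $\lambda_{B}^{(k)}=\lambda_k^0+\sigma^{(k)}_{\nu-}\varepsilon+\tfrac12\sigma_k\varepsilon$ by the same cross--family amount plus $O(1)\varepsilon\sigma_k$, which after the localized estimate below produces a $C\sigma_k^2h\varepsilon^2\le C2^{-\nu}|\sigma_k|h\varepsilon^2$ contribution; the degenerate case $k\in L$ is simpler still, the $k$--wave being a contact discontinuity with speed $\lambda_k(V_{k-1})=\lambda_k(V_k)$, and produces only the cross--family term.

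Finally I would localize. At time $t_0+h$ the function $V^{\varepsilon}_{\nu}$ has, inside $[x_0-\hat\lambda h,x_0+\hat\lambda h]$, a single jump near $x_0+S_{Bk}h$ (the $\sigma^{(k)}$--jump and the $E_\nu$--jump share the speed $S_{Bk}$), while $S_h V^{\varepsilon}_{\nu}(t_0,\cdot)$ consists of the $k$--wave near $x_0+S_kh$ together with the cubically small spurious $j$--waves. Splitting the strip into the region left of both $k$--fronts, the region right of both, and the overlap region of width $\lesssim|S_k-S_{Bk}|h$ in between: on the two outer regions the solution equals $V_{k-1}$, respectively $V_k$, up to the spurious waves, so each contributes $O(1)\varepsilon^3(\cdots)h$, which by the a priori bounds on $\|\sigma^{(j)}_\nu\|_\infty$ is dominated by $C(\sum_j|\sigma^{(j)}_{\nu-}|)^2|\sigma_k|h\varepsilon^3$ (plus, in the rarefaction case, a $2^{-\nu}$--absorbable $|\sigma_k|^3$ part); on the overlap region the two functions differ by $O(1)|V^{\varepsilon}_{\nu+}-V^{\varepsilon}_{\nu-}|=O(1)\varepsilon|\sigma_k|$, so that region contributes $|S_k-S_{Bk}|\cdot O(1)\varepsilon|\sigma_k|h$, which by the speed estimate is $\le C\sum_{\lambda_j^0\ne\lambda_k^0}|\sigma^{(j)}_{\nu-}||\sigma_k|h\varepsilon^2+C(\sum_j|\sigma^{(j)}_{\nu-}|)^2|\sigma_k|h\varepsilon^3$. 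Adding the three contributions gives the claim. The main difficulty is bookkeeping rather than conceptual: one must track every cross--family interaction that formerly vanished by $K_j(t)\cap K_k(t)=\emptyset$, verify it is either absorbed into the state--correction $E_\nu$ (the $(\rr_j^0\cdot\nabla)\rr_k^0$ pieces) or isolated into the single surviving speed term $\varepsilon\sum_{\lambda_j^0\ne\lambda_k^0}\sigma^{(j)}_{\nu-}(\rr_j^0\cdot\nabla)\lambda_k^0$, and check that the now globally--supported $E_\nu$ --- which is $O(1)$, controlled by $\|U^1\|_\infty\,TV(U^1)$ and Lipschitz in $t$ --- does not degrade the order of the remainder.
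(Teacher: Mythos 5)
Your proposal is correct and follows essentially the same route as the paper: the same case split (shock/contact discontinuity/rarefaction with $\sigma_k\le C2^{-\nu}$), the same use of Lemma~\ref{lem:Riemann noncompact} to make the spurious waves cubically small, the same core computation $|S_k-S_{Bk}|=O(1)\varepsilon\sum_{\lambda_j^0\neq\lambda_k^0}|\sigma_{\nu-}^{(j)}|+O(1)\varepsilon^2(\cdots)$ multiplied against the $O(\varepsilon|\sigma_k|)$ state jump over the overlap strip, and the same outer/overlap localization. Your explicit identification of the surviving cross-family speed term $\varepsilon\sum_{\lambda_j^0\neq\lambda_k^0}\sigma_{\nu-}^{(j)}(\rr_j^0\cdot\nabla)\lambda_k^0$ (vanishing within a multiplicity group by linear degeneracy) matches the paper's argument.
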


\begin{proof} The proof is divided into three cases: the contact discontinuity case,
rarefaction case, and shock case.

For the rarefaction case, the proof is the same as the one in \S \ref{sec:compact support}, which needs use
the crucial property of the wave strength in the scheme that $\sigma\leq C 2^{-\nu}$.
Thus, it suffices to consider the other two cases.

Consider $\sigma_k<0$ for the shock.
By Lemma \ref{lem:Riemann noncompact},  $\beta_k<0$.
Denote $S_k$ the k-shock speed of $\beta_k$ in the $(t,x)$--coordinates and $V_k=\Phi(V_{\nu-}^{\varepsilon};\,\beta_1,\,\cdot\,\cdot\,\cdot,\beta_k,0,\cdot\,\cdot\,\cdot,0)$.
Then
\begin{eqnarray*}
S_k&=&\lambda_k(V_{k-1})+\frac{1}{2}\beta_k+O(1)|\beta_k|^2 \\[2mm]
&=&\lambda_k^0+\varepsilon\sigma_{\nu-}^{(k)}+\frac{\sigma_k\varepsilon}{2}
  +O(1)\varepsilon\sum_{j\neq k} \sigma_{\nu-}^{(j)} +O(1)\varepsilon^2(\sigma^2_k+\varepsilon|\sigma_k|).
\end{eqnarray*}
On the other hand, the shock speed of the Burgers equation of $\sigma^{(k)}_{\nu}$ in the $(t,x)$--coordinates is
$$
S_{Bk}=\lambda_k^0+\sigma_{\nu-}^{(k)}\varepsilon +\frac{1}{2}\sigma_k\varepsilon.
$$
Then
$$
S_k-S_{Bk}=O(1)\varepsilon\sum_{j\neq k} |\sigma_{\nu-}^{(j)}|+ O(1)(\sigma^2_k+|\sigma_k||\sigma_{\nu-}^{(k)}|)\varepsilon^2.
$$
Meanwhile, with the fact that $|V_{k-1}-V_n|+|V_k-V_0|=O(1)|\sigma_k|\varepsilon$,
we deduce the core estimate for this case:
$$
|S_k-S_{Bk}|\big(|V_{k-1}-V_{\nu+}^{\epsilon}|+|V_k-V_{\nu-}^{\varepsilon}|\big)
=O(1)\varepsilon^2\sum_{j\neq k} |\sigma_{\nu-}^{(j)}||\sigma_k|+O(1)(|\sigma_k|^3+|\sigma_k|^2|\sigma_{\nu-}^{(k)}|)\varepsilon^3.
$$

Consider the case of contact discontinuity whose corresponding characteristic field has constant multiplicity:
$\rr_j\cdot\nabla\lambda_k\equiv0$ if $\lambda_j=\lambda_k$.
Then the kth-shock speed $S_k$ of $\beta_k$ in the $(t,x)$--coordinates is
$$
S_k=\lambda_k(V_{k-1})=\lambda_k^0 +O(1)\varepsilon\sum_{\lambda_j^0\neq\lambda_k^0} \sigma_{\nu-}^{(j)}.
$$
On the other hand, the shock speed of the Burgers equation of $\sigma_{k}$ in the $(t,x)$--coordinates is
$$
S_{Bk}=\lambda_k^0.
$$
Then we can obtain the same core estimate for this case as for the shock case.
With these estimates in hand, following the bootstrap in \S \ref{sec:compact support} step by step,
we have
\begin{eqnarray*}
&& \int_{x_0-\hat{\lambda}h<x<x_0+\hat{\lambda}h}|S_h(V_{\nu}^{\varepsilon}(t_0,x))-V^{\varepsilon}_{\nu}(t_0+h,x)| \mbox{d} x \\[2mm]
&& \leq C \sum_{\lambda_j^0\neq\lambda_k^0} |\sigma_{\nu-}^{(j)}||\sigma_k|h \varepsilon^2+ C\,\big(\sum_{j}|\sigma_{\nu-}^{(j)}|\big)^2|\sigma_k|h\varepsilon^3.
\end{eqnarray*}

Finally, combining this inequality with the inequality from the rarefaction wave case,
we completes the proof.
\end{proof}

Then, summing up the above estimates, we have

\begin{lem}\label{lem:noncompactproperty}
Suppose that the assumptions in Theorem {\rm \ref{thm:noncompact case}} hold. Then, for $t>0$, we have
\begin{eqnarray*}
&&\|S(t)V^{\varepsilon}_{\nu}(0)-V^{\varepsilon}_{\nu}(t)\|_{L^1(\mathrm{R})}\\
&&\leq C\,(\varepsilon t+2^{-\nu}t)\varepsilon^2
+C\varepsilon^2 \int_{0}^{t}\sum_{k}(\sum_{x\in J_k(s)}\sum_{\lambda_j^0\neq\lambda_k^0} |\sigma^{(j)}_{\nu}(\varepsilon s,x-\lambda_js)||\sigma_k(x)|)\mbox{d}s,
\end{eqnarray*}
where $x\in J_k(s)$ means that, at the point $(\varepsilon s, x-\lambda_{k}s)$,
$\sigma^{(k)}_{\nu}$ has a jump with the corresponding jump strength $\sigma_k(x)$.
\end{lem}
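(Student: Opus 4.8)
The plan is to apply the standard error formula \eqref{semigroup estimate} to the curve $\tau\mapsto V^{\varepsilon}_{\nu}(\tau,\cdot)$, which is Lipschitz continuous in $\tau$ by the bound $\|E_{\nu}(t,\cdot)-E_{\nu}(s,\cdot)\|_{L^1}\le C|t-s|$ established above together with \eqref{4.2c} (recall $\tau=\varepsilon t$), and whose total variation stays inside the domain $\mathcal{D}$ for $\varepsilon$ small. This yields
\begin{equation*}
\|S(t)V^{\varepsilon}_{\nu}(0)-V^{\varepsilon}_{\nu}(t)\|_{L^1}
\le L\int_0^t\liminf_{h\to0+}\frac{\|S_hV^{\varepsilon}_{\nu}(\tau,\cdot)-V^{\varepsilon}_{\nu}(\tau+h,\cdot)\|_{L^1}}{h}\,\mbox{d}\tau .
\end{equation*}
For fixed $\nu$ and $h$ small, $S_hV^{\varepsilon}_{\nu}(\tau,\cdot)$ is the function obtained by solving the Riemann problems at the jump points of $V^{\varepsilon}_{\nu}(\tau,\cdot)$; by \eqref{6.1a} the correction $\varepsilon^2E_{\nu}$ introduces no new discontinuities, so these jump points are exactly those of the functions $\sigma^{(j)}_{\nu}(\varepsilon\tau,\cdot-\lambda_j^0\tau)$, $j=1,\dots,n$.

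First I would localize the inner quotient. I decompose $\mathbb{R}$ into intervals each containing a single jump point $x_0$ of $V^{\varepsilon}_{\nu}(\tau,\cdot)$, splitting an interval into three sub-pieces as in \S\ref{sec:main thm} when two families jump at the same $x_0$. Since all wave speeds are bounded by $\hat{\lambda}$, for $h$ small the fronts issuing from distinct jumps neither interact nor leave their intervals, so on each piece the difference is supported in $(x_0-\hat{\lambda}h,x_0+\hat{\lambda}h)$; as the one-sided states $V^{\varepsilon}_{\nu\pm}$ used in Lemma \ref{prop:Riemann noncompact} are precisely the one-sided traces of $V^{\varepsilon}_{\nu}$ there, that lemma gives, for the family $\sigma^{(k)}_{\nu}$ jumping with strength $\sigma_k$ at $x_0$,
\begin{equation*}
\int_{|x-x_0|<\hat{\lambda}h}\!\big|S_hV^{\varepsilon}_{\nu}(\tau,x)-V^{\varepsilon}_{\nu}(\tau+h,x)\big|\,\mbox{d}x
\le C\Big(\sum_{\lambda_j^0\neq\lambda_k^0}|\sigma^{(j)}_{\nu-}||\sigma_k|\,\varepsilon^2+\big(\sum_j|\sigma^{(j)}_{\nu-}|\big)^2|\sigma_k|\,\varepsilon^3+2^{-\nu}|\sigma_k|\,\varepsilon^2\Big)h .
\end{equation*}

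Then I would sum over all jump points present at time $\tau$, divide by $h$, let $h\to0+$, and integrate over $\tau\in[0,t]$. Using $\sum_{x\in J_k(\tau)}|\sigma_k(x)|=TV\big(\sigma^{(k)}_{\nu}(\varepsilon\tau,\cdot)\big)\le C\,TV(U^1)$ and $\sum_j\|\sigma^{(j)}_{\nu}(\varepsilon\tau,\cdot)\|_{L^\infty}\le C\|U^1\|_{L^\infty}$, the second and third terms above contribute integrands bounded by $C\varepsilon^3$ and $C2^{-\nu}\varepsilon^2$, hence at most $C(\varepsilon t+2^{-\nu}t)\varepsilon^2$ after integration, while the first term contributes exactly $C\varepsilon^2\sum_k\sum_{x\in J_k(\tau)}\sum_{\lambda_j^0\neq\lambda_k^0}|\sigma^{(j)}_{\nu}(\varepsilon\tau,x-\lambda_j^0\tau)||\sigma_k(x)|$, giving the stated integral. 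This is the claimed inequality.

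The hard part will be the bookkeeping when the support of $U^1$ is not compact, so that $V^{\varepsilon}_{\nu}(\tau,\cdot)$ may have infinitely many jump points: one must check that the sum of the local bounds converges --- which it does, the total strength being $\le C\,TV(U^1)$ --- and that, letting $h\to0+$, the interactions among the countably many fronts remain negligible in the quotient. A subtler technical point is to verify that $\varepsilon^2E_{\nu}$ contributes to each jump of $V^{\varepsilon}_{\nu}$ exactly the quantity \eqref{6.1a}, which is what forces the one-sided states to match those in Lemma \ref{prop:Riemann noncompact} and, in turn, cancels the resonant $O(\varepsilon^2)$ discrepancy between the Riemann solver of \eqref{ngo1} and the decoupled Burgers fronts, leaving only the non-decaying resonance integral.
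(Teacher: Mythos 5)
Your proposal is correct and follows essentially the same route the paper indicates: apply the semigroup error formula \eqref{semigroup estimate} to the Lipschitz curve $V^{\varepsilon}_{\nu}$, take $h$ small so that $S_hV^{\varepsilon}_{\nu}$ is obtained by piecing together the local Riemann solutions at the jumps, invoke Lemma \ref{prop:Riemann noncompact} at each jump, and sum using the $TV$ and $L^\infty$ bounds. Your write-up in fact supplies more of the localization and summation bookkeeping than the paper's one-sentence justification does.
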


This can be achieved by first using the standard semigroup error formula \eqref{semigroup estimate}, then letting $h$
small enough such that $S(h)V_{\nu}^{\epsilon}$ is the solution obtained by piecing together
the standard entropy solutions of the Riemann problems determined by the jumps of $V_{\nu}^{\epsilon}$, and
finally using Lemma \ref{prop:Riemann noncompact}.

Since the estimates for the auxiliary function have been established,
we are now at the stage to consider the estimates for the geometric optic expansion function $U_{w}^{\varepsilon}$.

\begin{lem}\label{prop:noncompact}
Suppose that the assumptions in Theorem {\rm \ref{thm:noncompact case}} hold. Let $I_m=(m,m+2)$. Then, for $t>0$, we have
\begin{eqnarray*}
&&\|S(t)U^{\varepsilon}_w(0)-U^{\varepsilon}_w(t)\|_{L^1(\mathrm{R})}\\
&&\leq C(\varepsilon^2+\varepsilon^3t)+C\varepsilon^2\int_{0}^{t}
\sum_{m\in\mathbb{Z}}\sum_{\lambda_j^0\neq\lambda_k^0}\|\sigma^{(j)}\|_{L^{\infty}(I_m)}
TV_{x\in I_m}\big(\sigma^{(k)}\big)\mbox{d}s.
\end{eqnarray*}
\end{lem}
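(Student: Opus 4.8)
The plan is to reproduce, in the $BV\cap L^1$ setting, the argument used for Lemma~\ref{prop:CS}, working with the piecewise-constant approximations $\sigma^{(j)}_{\nu}$ and the auxiliary function $V^{\varepsilon}_{\nu}=U^0+\varepsilon\sum_j\sigma^{(j)}_{\nu}(\varepsilon t,x-\lambda_j^0t)\rr_j^0+\varepsilon^2E_{\nu}(t,x)$ introduced above, so that by construction $U^{\varepsilon}_{w,\nu}(t,\cdot)=V^{\varepsilon}_{\nu}(t,\cdot)-\varepsilon^2E_{\nu}(t,\cdot)$, and passing to the limit $\nu\to\infty$ at the end (with $\varepsilon$ and $t$ fixed). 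For $\varepsilon$ small all functions involved have total variation below the threshold $\delta_0$ of Lemma~\ref{prop:stability}, so the semigroup $S$ acts on them. First I would split, using the triangle inequality and the Lipschitz bound of Lemma~\ref{prop:stability},
\begin{align*}
\|S(t)U^{\varepsilon}_{w,\nu}(0)-U^{\varepsilon}_{w,\nu}(t)\|_{L^1}
&\le\|S(t)U^{\varepsilon}_{w,\nu}(0)-S(t)V^{\varepsilon}_{\nu}(0)\|_{L^1}
+\|S(t)V^{\varepsilon}_{\nu}(0)-V^{\varepsilon}_{\nu}(t)\|_{L^1}\\
&\quad+\|V^{\varepsilon}_{\nu}(t)-U^{\varepsilon}_{w,\nu}(t)\|_{L^1}\\
&\le L\,\varepsilon^2\|E_{\nu}(0,\cdot)\|_{L^1}+\|S(t)V^{\varepsilon}_{\nu}(0)-V^{\varepsilon}_{\nu}(t)\|_{L^1}+\varepsilon^2\|E_{\nu}(t,\cdot)\|_{L^1},
\end{align*}
where the middle term is exactly the object estimated in Lemma~\ref{lem:noncompactproperty}.

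Next I would dispose of the two error-term contributions. From the jump relation \eqref{6.1a}, the $L^1$-contraction property of the genuinely nonlinear scalar conservation laws (resp.\ of the scheme of \S\ref{sec:scalar scheme}) together with the pure transport of the linearly degenerate components, and the uniform bounds $\|\sigma^{(j)}_{\nu}(\tau,\cdot)\|_{L^\infty}\le C\|U^1\|_{L^\infty}$, $\|\sigma^{(j)}_{\nu}(\tau,\cdot)\|_{L^1}\le C\|U^1\|_{L^1}$, $TV(\sigma^{(j)}_{\nu}(\tau,\cdot))\le C\,TV(U^1)$ (Step~1 of \S\ref{sec:main thm}), one recovers the property recorded above that $E_{\nu}(t,\cdot)\in L^1\cap BV$, with $\|E_{\nu}(t,\cdot)\|_{L^1}\le C$ and the constant independent of $\nu$ and $t$; this is precisely where the hypothesis $U^1\in L^1$, rather than merely compact support, enters, replacing the argument of \S\ref{sec:compact support}. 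Hence $L\,\varepsilon^2\|E_{\nu}(0,\cdot)\|_{L^1}+\varepsilon^2\|E_{\nu}(t,\cdot)\|_{L^1}\le C\varepsilon^2$.

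It then remains to insert Lemma~\ref{lem:noncompactproperty} and let $\nu\to\infty$. The term $C(\varepsilon t+2^{-\nu}t)\varepsilon^2$ produces $C\varepsilon^3t$ in the limit, since $2^{-\nu}t\varepsilon^2\to0$. For the interaction integral I would use the locally finite cover $\{I_m=(m,m+2)\}_{m\in\mathbb Z}$, in which every point of the line belongs to at most two intervals: at a fixed time $s$ and for each $m$, the total strength $\sum_{x\in J_k(s)\cap I_m}|\sigma_k(x)|$ of the jumps of $\sigma^{(k)}_{\nu}$ located in $I_m$ equals $TV_{x\in I_m}(\sigma^{(k)}_{\nu})$, while at each such point $|\sigma^{(j)}_{\nu}(\varepsilon s,x-\lambda_j^0s)|\le\|\sigma^{(j)}_{\nu}\|_{L^\infty(I_m)}$, so that, for each $k$,
$$
\sum_{x\in J_k(s)}\sum_{\lambda_j^0\ne\lambda_k^0}|\sigma^{(j)}_{\nu}|\,|\sigma_k(x)|
\le C\sum_{m\in\mathbb Z}\sum_{\lambda_j^0\ne\lambda_k^0}\|\sigma^{(j)}_{\nu}\|_{L^\infty(I_m)}\,TV_{x\in I_m}\big(\sigma^{(k)}_{\nu}\big)
\le C\,\|U^1\|_{L^\infty}\,TV(U^1),
$$
the last inequality because $\sum_m TV_{x\in I_m}(\sigma^{(k)}_{\nu})\le 2\,TV(\sigma^{(k)}_{\nu})\le C\,TV(U^1)$. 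Summing over $k$ and integrating over $[0,t]$, this $\nu$- and $s$-uniform bound permits passing the limit inside the $s$-integral by dominated convergence; using $\sigma^{(j)}_{\nu}\to\sigma^{(j)}$ a.e.\ (Step~1 of \S\ref{sec:main thm}) with lower semicontinuity of the total variation, and finally the $L^1$-stability of Lemma~\ref{prop:stability} to replace $U^{\varepsilon}_{w,\nu}$ by $U^{\varepsilon}_w$ on the left, yields the asserted inequality.

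I expect the main obstacle to be the passage $\nu\to\infty$ in the interaction integral: both the jump sets $J_k(s)$ and the approximants $\sigma^{(k)}_{\nu}$ depend on $\nu$, so one must first dominate the $\nu$-dependent sum of jump strengths by the $\nu$-independent functional $\sum_m\sum_j\|\sigma^{(j)}\|_{L^\infty(I_m)}TV_{x\in I_m}(\sigma^{(k)})$ before taking the limit, and then check that this limiting integrand is finite and integrable in $s$ — which it is, being dominated by $C\,\|U^1\|_{L^\infty}TV(U^1)$ uniformly in $s$, hence by $C\,t\,\|U^1\|_{L^\infty}TV(U^1)$ on $[0,t]$. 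The uniform $L^1$-bound on $E_{\nu}$, though conceptually the place where $U^1\in L^1$ is genuinely used, reduces to the elementary estimates recalled above, and the remaining steps are a line-by-line repetition of the bootstrap already carried out in \S\ref{sec:compact support}.
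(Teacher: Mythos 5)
Your proposal follows the paper's proof essentially verbatim: the same three-term triangle-inequality splitting using the identity $U^{\varepsilon}_{w,\nu}=V^{\varepsilon}_{\nu}-\varepsilon^2E_{\nu}$, the Lipschitz bound of Lemma \ref{prop:stability} for the first term, Lemma \ref{lem:noncompactproperty} for the middle term, the uniform $L^1$ bound on $E_{\nu}$ for the outer terms, and the passage $\nu\to\infty$ at the end. The only difference is that you spell out the domination of the jump-strength sums by the $\sum_m\|\sigma^{(j)}\|_{L^{\infty}(I_m)}TV_{I_m}(\sigma^{(k)})$ functional, which the paper leaves implicit in its final limit step; this is a useful clarification but not a different argument.
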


\begin{proof} This is a simple corollary of Lemma \ref{lem:noncompactproperty}. Notice that
\begin{align*}
U^{\varepsilon}_{w,\nu}(t,x)=V^{\varepsilon}_{\nu}(t,x)-E_{\nu}(t,x)\varepsilon^2,
\end{align*}
and that $E_{\nu}(t,\cdot )\in BV(\mathbb{R};\mathbb{R}^n)\cap L^1(\mathbb{R};\mathbb{R}^n)$.
Thus, by Lemma \ref{prop:stability}, we have
\begin{eqnarray*}
&&\|S(t)U^{\varepsilon}_{w,\nu}(0)-U^{\varepsilon}_{w,\nu}(t)\|_{L^1(\mathrm{R})}\\[2mm]
&&\leq \|S(t)U^{\varepsilon}_{w,\nu}(0)-S(t)V_{\nu}^{\varepsilon}(0)\|_{L^1(\mathrm{R})}
     +\|S(t)V_{\nu}^{\varepsilon}(0)-V^{\varepsilon}_{\nu}(t)\|_{L^1(\mathrm{R})}\\[1.5mm]
&&\quad +\|V^{\varepsilon}_{\nu}(t)-U^{\varepsilon}_{w,\nu}(t)\|_{L^1(\mathrm{R})}\\[2mm]
&&\leq L \|V^{\varepsilon}_{\nu}(0)-U^{\varepsilon}_{w,\nu}(0)\|_{L^1(\mathrm{R})}
     +C\,(\varepsilon t+2^{-\nu}t)\varepsilon^2
     +\|V^{\varepsilon}_{\nu}(t) -U^{\varepsilon}_{w,\nu}(t)\|_{L^1(\mathrm{R})}\\
&&\quad +C\varepsilon^2 \int_{0}^{t}\sum_{k}\big(\sum_{x\in J_k(s)}\sum_{\lambda_j^0\neq\lambda_k^0} |\sigma^{(j)}_{\nu}(\varepsilon s,x-\lambda_js)||\sigma_k(x)|\big)\mbox{d}s\\
&&\leq C\,(1+\varepsilon t+2^{-\nu}(t-T_0))\varepsilon^2 +C\varepsilon^2 \int_{0}^{t}\sum_{k}\big(\sum_{x\in J_k(s)}\sum_{\lambda_j^0\neq\lambda_k^0}|\sigma^{(j)}_{\nu}(\varepsilon s,x-\lambda_js)||\sigma_k(x)|\big)\mbox{d}s.
\end{eqnarray*}
Then, passing the limit $\nu\rightarrow\infty$, we have
$$
\|S(t)U^{\varepsilon}_w(0)-U^{\varepsilon}_w(t)\|_{L^1(\mathrm{R})}\leq C(\varepsilon^2+ \varepsilon^3t)+C\varepsilon^2 \int_{0}^{t}\sum_{m\in\mathbb{Z}}\sum_{\lambda_j^0\neq\lambda_k^0} \|\sigma^{(j)}\|_{L^{\infty}(I_m)}
TV_{x\in I_m}\big(\sigma^{(k)}\big)\mbox{d}s,
$$
which completes the proof.
\end{proof}

With Lemmas \ref{lem:Riemann noncompact}--\ref{prop:noncompact},
we now prove Theorem \ref{thm:noncompact case}.

\begin{proof}[$\mathbf{Proof~of~Theorem~\ref{thm:noncompact case}}$]  It suffices to prove
$$
\lim_{\varepsilon\rightarrow0}\sup_{0\leq t\leq T_0/\varepsilon}\varepsilon \int_{0}^{t}\sum_{m\in\mathbb{Z}} \sum_{\lambda_j^0\neq\lambda_k^0}\|\sigma^{(j)}\|_{L^{\infty}(I_m)}
TV_{x\in I_m}\big(\sigma^{(k)}\big)\mbox{d}s\rightarrow0,
$$
where $I_m=(m,m+2)$. Let $\tau=\varepsilon s$. Then
\begin{align*}
&\lim_{\varepsilon\rightarrow0}\sup_{0\leq t\leq T_0/\varepsilon}\varepsilon \int_{0}^{t}\sum_{m\in\mathbb{Z}} \sum_{\lambda_j^0\neq\lambda_k^0}\|\sigma^{(j)}\|_{L^{\infty}(I_m)}
TV_{x\in I_m}\big(\sigma^{(k)}\big)\mbox{d}s\\
&\leq \lim_{\varepsilon\rightarrow0}\varepsilon \int_{0}^{T_0/\varepsilon}\sum_{m\in\mathbb{Z}}\sum_{\lambda_j^0\neq\lambda_k^0}
\|\sigma^{(j)}\|_{L^{\infty}(I_m)}TV_{x\in I_m}\big(\sigma^{(k)}\big)\mbox{d}s\\
&=\lim_{\varepsilon\rightarrow0}\int_{0}^{T_0}\sum_{m\in\mathbb{Z}}\sum_{\lambda_j^0\neq\lambda_k^0}
\|\sigma^{(j)}(\tau,\cdot-\frac{\lambda_j^0}{\varepsilon}\tau)\|_{L^{\infty}(I_m)}
TV_{x\in I_m}\big(\sigma^{(k)}(\tau,\cdot-\frac{\lambda_k^0}{\varepsilon}\tau)\big)\mbox{d}\tau.
\end{align*}
Since $\sigma^{j}(\tau,y)\in BV\cap L^1$ and $\lambda_j^0\neq\lambda_k^0$, then,
for any given $\tau\in(0,T_0)$ and any $1\leq j\leq n$,
we have
$$
\lim_{y\rightarrow\pm\infty}\sigma^{(j)}(\tau,y)=0, \qquad
\lim_{y\rightarrow\infty}TV_{(-\infty,-y)\cup(y,\infty)}(\sigma^{(j)})=0.
$$
Then, for any $\epsilon_1>0$, there exists a constant $M_j>0$ such that,
for any $|y|>M_j$, $|\sigma^{(j)}(\tau,y)|\leq\epsilon_1$.
Next, let $\varepsilon>0$ small enough such that, for any $\lambda_j^0\neq\lambda_k^0$,
$$
TV_{(-M_j-2+\frac{\lambda_j^0-\lambda_k^0}{\varepsilon}\tau, -M_j+2+\frac{\lambda_j^0-\lambda_k^0}{\varepsilon}\tau)}\big(\sigma(\tau,\cdot)\big)\leq\epsilon_1.
$$
Thus, for any $\varepsilon>0$ small enough, set
$$
J(j,\tau,\varepsilon):=\{m\in\mathbb{Z}\,\, \big|\,\, I_{m}\subset(-\infty,-M_j+\frac{\lambda_j^0}{\varepsilon}\tau) \cup(M_j+\frac{\lambda_j^0}{\varepsilon}\tau,\infty)\}.
$$
Then we have
\begin{align*}
&\sum_{m\in\mathbb{Z}}\sum_{\lambda_j^0\neq\lambda_k^0}
\|\sigma^{(j)}(\tau,\cdot-\frac{\lambda_j^0}{\varepsilon}\tau)\|_{L^{\infty}(I_m)}
TV_{x\in I_m}(\sigma^{(k)}\big(\tau,\cdot-\frac{\lambda_k^0}{\varepsilon}\tau)\big)\\
&=\sum_{\lambda_j^0\neq\lambda_k^0}\sum_{m\in J(j,\tau,\varepsilon)} \|\sigma^{(j)}(\tau,\cdot-\frac{\lambda_j^0}{\varepsilon}\tau)\|_{L^{\infty}(I_m)}
TV_{x\in I_m}\big(\sigma^{(k)}(\tau,\cdot-\frac{\lambda_k^0}{\varepsilon}\tau)\big)\\
&\quad +\sum_{\lambda_j^0\neq\lambda_k^0}\sum_{m\in(J(j,\tau,\varepsilon))^c} \|\sigma^{(j)}(\tau,\cdot-\frac{\lambda_j^0}{\varepsilon}\tau)\|_{L^{\infty}(I_m)}
TV_{x\in I_m}\big(\sigma^{(k)}(\tau,\cdot-\frac{\lambda_k^0}{\varepsilon}\tau)\big)\\
&\leq 2\sum_{\lambda_j^0\neq\lambda_k^0} \|\sigma^{(j)}(\tau,\cdot)
\|_{L^{\infty}((-\infty,-M_j)\cup(M_j,\infty))}TV\big(\sigma^{(k)}(\tau,\cdot)\big)\\
&\quad +2\sum_{\lambda_j^0\neq\lambda_k^0} \|\sigma^{(j)}(\tau,\cdot)\|_{L^{\infty}}
TV_{(-M_j-2+\frac{\lambda_j^0-\lambda_k^0}{\varepsilon}\tau, -M_j+2+\frac{\lambda_j^0-\lambda_k^0}{\varepsilon}\tau)}
\big(\sigma^{(k)}(\tau,\cdot)\big)\\
&\leq 2nTV\big(\sigma^{(k)}(0,\cdot)\big)\,\epsilon_1+2\sum_{j}\|\sigma^{(j)}\|_{L^{\infty}}\,\epsilon_1.
\end{align*}
Thus, we obtain that, for any $\tau\in(0,T_0)$,
$$
\lim_{\varepsilon\rightarrow0}\sum_{m\in\mathbb{Z}}\sum_{\lambda_j^0\neq\lambda_k^0}
\|\sigma^{(j)}(\tau,\cdot-\frac{\lambda_j^0}{\varepsilon}\tau)\|_{L^{\infty}(I_m)}
TV_{x\in I_m}\big(\sigma^{(k)}(\tau,x-\frac{\lambda_k^0}{\varepsilon}\tau)\big)=0.
$$
Meanwhile, let
$$
G(\tau):=C(n)\max_{j}\|\sigma^{(j)}(\tau, \cdot)\|_{L^\infty} \max_{j} TV\big(\sigma^{(j)}(\tau,\cdot)\big)
\leq C(n)\|U^1(\cdot)\|_{L^\infty}TV\big(U^1(\cdot)\big),
$$
where the constant $C(n)$ depends only on the dimension $n$. Then
$$
\sum_{m\in\mathbb{Z}}\sum_{\lambda_j^0\neq\lambda_k^0}
\|\sigma^{(j)}(\tau,\cdot-\frac{\lambda_j^0}{\varepsilon}\tau)\|_{L^{\infty}(I_m)}
TV_{x\in I_m}\big(\sigma^{(k)}(\tau,x-\frac{\lambda_k^0}{\varepsilon}\tau)\big)
\leq G(\tau).
$$
Notice that
$$
\int_0^{T_0}G(\tau)\mbox{d}\tau\leq C(n)T_0\|U^1(\cdot)\|_{\infty}TV\big(U^1(\cdot)\big)<\infty.
$$
Then, by the dominant convergence theorem, we have
$$
\lim_{\varepsilon\rightarrow0}\int_{0}^{T_0}\sum_{m\in\mathbb{Z}}\sum_{\lambda_j^0\neq\lambda_k^0}| |\sigma^{(j)}(\tau,\cdot-\frac{\lambda_j^0}{\varepsilon}\tau)||_{L^{\infty}(I_m)}
TV_{x\in I_m}\big(\sigma^{(k)}(\tau, x-\frac{\lambda_k^0}{\varepsilon}\tau)\big)\mbox{d}\tau\rightarrow0.
$$
Therefore, we conclude
$$
\sup_{0\leq t\leq T_0/\varepsilon}\|U^{\varepsilon}(t,\cdot)-U^{\varepsilon}_w(t,\cdot)\|_{L^1}=o(\varepsilon)
\qquad\mbox{when}\, \varepsilon\rightarrow0.
$$
\end{proof}

\bigskip
\paragraph{Acknowledgements.}  The authors thank the referees for valuable suggestions and comments.
The research of
Gui-Qiang Chen was supported in part by the National Science
Foundation under Grant DMS-0807551, the UK EPSRC Science and Innovation
Award to the Oxford Centre for Nonlinear PDE (EP/E035027/1),
the NSFC under a joint project Grant 10728101, and
the Royal Society--Wolfson Research Merit Award (UK).
Wei Xiang was supported in part by the China Scholarship Council  No.
2008631071 while
visiting the University of Oxford and the Doctoral Program Foundation of the Ministry Education of China.
Yongqian Zhang was supported in part by NSFC Project 11031001, NSFC Project 11121101,
and the 111 Project B08018 (China).
\bigskip

\bibliographystyle{plain}

\begin{thebibliography}{10}

\bibitem{b-book2000}
Bressan, A. (2000).
\newblock {\em Hperbolic Systems of Conservation Laws}.
\newblock Oxford University Press: Oxford.

\bibitem{ccz-iumj20072535}
Chen, G.-Q., Christoforou, C. and Zhang, Y. (2007).
\newblock Dependence of entropy solutions with large oscillations to the
  {E}uler equations on nonlinear flux functions.
\newblock {\em Indiana Univ. Math. J.} \, {56}:2535--2568.

\bibitem{ccz-arma200897}
Chen, G.-Q., Christoforou, C. and Zhang, Y. (2008).
\newblock Contiuous dependce of entropy solutions to the Euler equations on
  the adiabatic exponent and {M}ach number.
\newblock {\em Arch. Ration. Mech. Anal.}\, 189(1):97--130.

\bibitem{cjr-jde2006439}
Chen, G.-Q., Junca, S., and M.~Rascle. (2006).
\newblock Validity of nonlinear geometric optics for entropy solutions of
  multidimensional scalar conservation laws.
\newblock {\em J. Differential Equations}, 222:439--475.

\bibitem{c-cpde19961119}
Cheverry, C. (1996).
\newblock The modulation equations of nonlinear geometric optics.
\newblock {\em Comm. Partial Differential Equations}, 21:1119--1140.

\bibitem{c-dmj1997213}
Cheverry, C. (1997).
\newblock Justification de l'optique g\'eom\'etrique non lin\'eaire pour un
  syst\`eme de lois de conservation [{J}ustification of nonlinear geometric
  optics for a system of conservation laws].
\newblock {\em Duke Math. J.} \, 87:213--263.
\newblock (In {F}rench).

\bibitem{d-jmaa197233}
Dafermos, C.~M. (1972).
\newblock Polygonal approximations of solutions of the initial value problem
  for a conservation law.
\newblock {\em J. Math. Anal. Appl.}\,  38:33--41.

\bibitem{d-book2000}
Dafermos, C.~M. (2000).
\newblock {\em Hyperbolic Conservation Laws in Continuum Physics}.
\newblock Springer: New York.

\bibitem{dm-cmp1985313}
DiPerna, R.~J. and Majda, A. (1985).
\newblock The validity of nonlinear geometric optics for weak solutions of
  conservation laws.
\newblock {\em Comm. Math. Phys.}\, 98:313--347.

\bibitem{g-dmj1992401}
Gu\`es, O. (1992).
\newblock Ondes multidimensionnelles $\epsilon$-stratifi\'ees et oscillations
  [{M}ultidimensional $\epsilon$-stratified waves and oscillations].
\newblock {\em Duke Math. J.} \, 68:401--446.
\newblock (In {F}rench).

\bibitem{hmr-spm1986187}
Hunter, J.~K., Majda, A., and Rosales, R.  (1986)
\newblock Resonantly interacting, weakly nonlinear hyperbolic waves ii, several
  space variables.
\newblock {\em Stud. Appl. Math.}\, 75:401--446.

\bibitem{jmr-jfa1993106}
Joly, J.~L., M\'etivier,~G., and Rauch, J.  (1993).
\newblock Resonant one-dimensional nonlinear geometric optics.
\newblock {\em J. Funct. Anal.} \, 114:106--231.

\bibitem{ky-aml2003143}
Kong, D.-X. and Yang, T.  (2003).
\newblock A note on ``well-posedness theory for hyperbolic conservation laws".
\newblock {\em Appl. Math. Lett.} \, 16(2):143--146.

\bibitem{m}
Majda, A. (1986).
\newblock  Nonlinear goemetric optics for hyperbolic systems of conservation laws.
\newblock In: {\em Oscillation Theory, Computation, and Methods of Compensated Compactness (Minneapolis, Minn., 1985)},
115--165, IMA Vol. Math. Appl. 2, Springer: New York.

\bibitem{mr-sam1984149}
A.~Majda and R.~Rosales, R. (1984).
\newblock Resonantly interacting weakly nonlinear hyperbolic waves, I:
A single space variable.
\newblock {\em Stud. Appl. Math.} \, 71:149--179.

\bibitem{s-jde1994473}
Schochet, S. (1994).
\newblock Resonant nonlinear geometric optics for weak solutions of
  conservation laws.
\newblock {\em J. Differential Equations}, 113:473--504.

\bibitem{s-book1983}
Smoller, J. (1983).
\newblock {\em Shock Waves and Reaction-Diffusion Equations}.
\newblock Springer-Verlag: New York.

\end{thebibliography}

\end{document}